\title{Regular sequences for triangulated categories}
\date{\today}
\keywords{regular elements, regular sequences, triangulated categories, derived categories, ghost maps, local cohomology, Koszul objects, level, Rouquier dimension}
\subjclass[2020]{18G80 (primary), 13D09, 13D45}
\author[A.~Kekkou]{Antonia Kekkou}
\address{Antonia Kekkou, 
Department of Mathematics,
University of Utah, 
UT 84112, U.S.A.}
\email{kekkou@math.utah.edu}
\author[J.~C.~Letz]{Janina C. Letz}
\address{Janina~C.~Letz,
Faculty of Mathematics,
Bielefeld University,
PO Box 100 131,
33501 Bielefeld,
Germany}
\email{jletz@math.uni-bielefeld.de}
\author[M.~Stephan]{Marc Stephan}
\address{Marc Stephan,
Faculty of Mathematics,
Bielefeld University,
PO Box 100 131,
33501 Bielefeld,
Germany}
\email{marc.stephan@math.uni-bielefeld.de}
\begin{document}

\begin{abstract}
This paper systematically develops a notion of regular sequences in the context of $R$-linear triangulated categories for a graded-commutative ring $R$.
The notion has equivalent characterizations involving Koszul objects and local cohomology.
The main examples are in the context of the Hochschild cohomology ring or the group cohomology ring acting on derived or stable categories. 
As applications, lengths of regular sequences provide lower bounds for level and Rouquier dimension. 
\end{abstract}

\maketitle
\setcounter{tocdepth}{1}
\tableofcontents

\section{Introduction}

Regular sequences play a central role in commutative algebra as they have useful homological properties. We systematically develop a notion of regular sequences in the context of $R$-linear triangulated categories for a graded-commutative ring $R$. Our notion of regular sequences is compatible with the triangulated structure, namely suspension and exact triangles. 

A motivating example is the derived category of commutative ring where our definition extends the classical definition for regular sequences of modules. We start from the characterization of regular sequences on a module via vanishing of Koszul homology and local cohomology. The Koszul complex of a complex has two canonical filtrations, the filtration by the homological degree and the filtration obtained from its construction. Hence vanishing of Koszul homology can be generalized with either of these filtrations. Previous definitions of regular sequences for complexes of modules by \cite{Christensen:2001,Minamoto:2021} used the former filtration, we use the latter, as it respects the triangulated structure.

In a triangulated category, Koszul objects generalize Koszul complexes. The Koszul object $\kosobj{M}{x}$ of $x \in R$ on the object $M$ is equipped with a morphism $\kosobj{M}{x} \to \susp M$ corresponding to the truncation map to degree $\geq 1$ for a Koszul complex. The local cohomology functor $\Gamma_{\cV(x)}$ for a triangulated category as introduced in \cite{Benson/Iyengar/Krause:2008} generalizes the derived torsion functor whose homology is local cohomology. It is equipped with a natural transformation $\Gamma_{\cV(x)} M \to M$. 

\begin{introthm} \label{intro:weakly_regular}
Let $R$ be a graded-commutative ring and $\cat{T}$ an $R$-linear triangulated category. Let $C,M \in \cat{T}$ and $x_1, \ldots, x_t$ a sequence in $R$. The following are equivalent
\begin{enumerate}
\item $x_1, \ldots, x_t$ is weakly $\Hom[*]{\cat{T}}{C}{M}$-regular in the classical sense; 
\item each morphism in the composition
\begin{equation*}
\susp^{-t} \kosobj{M}{(x_1, \ldots, x_t)} \to \susp^{-t+1} \kosobj{M}{(x_1, \ldots, x_{t-1})} \to \dots \to \susp^{-1} \kosobj{M}{x_1} \to M
\end{equation*}
is $C$-ghost. 
\end{enumerate}
If the ring $R$ is noetherian, the triangulated category $\cat{T}$ is compactly generated and $C$ is compact, then the above conditions are equivalent to
\begin{enumerate}[resume]
\item each morphism in the composition
\begin{equation*}
\Gamma_{\cV(x_1, \ldots, x_t)} M \to \Gamma_{\cV(x_1, \ldots, x_{t-1})} M\to \dots \to \Gamma_{\cV(x_1)} M \to M
\end{equation*}
is $C$-ghost.
\end{enumerate}
\end{introthm}

This \namecref{intro:weakly_regular} follows from \cref{regular_nzdivisor,reg_seq_equiv}. We say a sequence $x_1, \ldots, x_t$ is \emph{$(C,M)$-regular} if it satisfies the conditions from \cref{intro:weakly_regular} and the composition $\susp^{-t} \kosobj{M}{(x_1, \ldots, x_t)} \to M$ is non-zero. There is no characterization of the latter condition in terms of the local cohomology functor in general. We provide a characterization under additional assumptions in \cref{subsec:nonzero}. 

Our definition of regular sequences recovers the classical definition for a module $M$ over a commutative ring $A$ by taking $\cat{T}=\dcat{A}$, the derived category of a commutative ring, $R=A$ and $C=A$. 

While our notion of regular sequences is new, examples have appeared in various contexts. For a field $k$ and a finite group $G$, the derived category $\cat{T}=\dcat{kG}$ is linear over the group cohomology $\Ext[*]{kG}{k}{k}$. The $(k,k)$-regular sequences are the classical regular sequences in the group cohomology ring.

When $A$ is a commutative ring and $B$ a dg algebra over $A$, then the Hochschild--Shukla cohomology $\HH{B/A}$ acts on the derived category $\dcat{B}$. When $A$ is a regular local ring with residue field $k$, and $B$ is a Koszul complex over $A$ or a complete intersection, then there is a canonical $(k,k)$-regular sequence. Utilizing this regular sequence, we can bypass proofs using BGG correspondence to recover, and generalize, several results by \cite{Avramov/Buchweitz/Iyengar/Miller:2010,Briggs/Grifo/Pollitz:2024} from the following result.

\begin{introthm} \label{main_level_lower_bound_reg}
Let $R$ be a graded-commutative ring and $\cat{T}$ an $R$-linear triangulated category. Let $C,M \in \cat{T}$ and $x_1, \ldots, x_t \in R$ a $(C,C)$-regular sequence. If $M \in \thick_\cat{T}(\kosobj{C}{(x_1, \ldots, x_t)})$ and $M\neq 0$, then
\begin{equation*}
\level_\cat{T}^C(M) \geq t + 1\,.
\end{equation*}
\end{introthm}

This result is proven in \cref{level_lower_bound_reg}, and its connection to \cite{Avramov/Buchweitz/Iyengar/Miller:2010,Briggs/Grifo/Pollitz:2024} via Hochschild cohomology is explained in-depth in \cref{level_HH_recover}. In particular, we recover precursors from equivariant topology, the rank inequalities for free actions of elementary abelian $p$-groups on finite CW-complexes from \cite{Carlsson:1983,Baumgartner:1993,Allday/Puppe:1993}.

As another application, due to the second author, the length of any $(M,M)$-regular sequence provides a lower bound to Rouquier dimension whenever the triangulated category is Ext-finite; see \cite[Theorem~B]{Letz:2025}. In particular, the Rouquier dimension of the bounded derived category of finitely generated modules over a commutative noetherian ring $A$ is bounded below by the depth of the ring $A$. When $A$ is a complete intersection ring, we show that the Rouquier dimension of the bounded derived category of a complete intersection is also bounded below by the codimension of $A$. For a finite group algebra $kG$ over a field $k$ of characteristic $p$, we show that the Rouquier dimension of the bounded derived category of $kG$ is bounded below by the $p$-rank of $G$. This improves the bounds from \cite[Corollary~5.10]{Bergh/Iyengar/Krause/Oppermann:2010} and \cite{Oppermann:2007,Bergh/Iyengar/Krause/Oppermann:2010}, respectively, by one.

To prove \cref{intro:weakly_regular} we connect Koszul objects and local cohomology. In commutative algebra they are connected via the \v{C}ech complex, which was generalized to the triangulated setting for one element by \cite{Benson/Iyengar/Krause:2011b}. We generalize this to a sequence of elements.

\begin{introthm}\label{introthm:kos_local_coh}
Let $R$ be a graded-commutative noetherian ring and $\cat{T}$ a compactly generated $R$-linear triangulated category. For any sequence of elements $x_1, \ldots, x_t$ in $R$, there exists an isomorphism
\begin{equation*}
\hocolim_n \susp^{-t} \kosobj{M}{(x_1^n, \dots, x_t^n)} \xrightarrow{\cong} \Gamma_{\cV(x_1, \ldots, x_t)} M \nospacepunct{.}
\end{equation*}
\end{introthm}

We prove this result in \cref{Gamma_hocolim}, where we also show that the isomorphism is compatible with the canonical morphisms to $M$.

\begin{ack}
We thank Srikanth Iyengar and Henning Krause for helpful discussions.

The second and third authors were partly funded by the Deutsche Forschungsgemeinschaft (DFG, German Research Foundation)---Project-ID 491392403---TRR 358. The first author was partly supported by National Science Foundation grant DMS-200985.
\end{ack}

\section{Koszul objects}

Let $R$ be a graded-commutative ring. This means $R = \bigoplus_{n \in \BZ} R_n$ is $\BZ$-graded and $xy = (-1)^{|x| |y|}yx$ where $|x|$ and $|y|$ are degrees of $x$ and $y$, respectively. All elements and ideals will be homogeneous in a graded ring.

\subsection{Linear triangulated categories}

A triangulated category $\cat{T}$ is \emph{$R$-linear}, if it is equipped with a homomorphism
$R \to \ctr{\cat{T}}$ of graded rings, where 
\begin{equation*}
 \ctr{\cat{T}} = \bigoplus_{n\in \BZ} \set{ \alpha\colon \id_{\cat{T}} \to \susp^n | \alpha(\susp) = (-1)^n \susp \alpha}
\end{equation*}
 is the graded center of $\cat{T}$; see \cite[Section~3]{Buchweitz/Flenner:2008}. It follows that
\begin{equation*}
\Hom[*]{\cat{T}}{M}{N} \colonequals \bigoplus_{n \in \BZ} \Hom {\cat{T}}{M}{\susp^n N}
\end{equation*}
is a graded $R$-module for any objects $M$ and $N$.
Often, we write
\begin{equation*}
x = x(M) = x \cdot \id_M \colon M \to \susp^{|x|} M
\end{equation*}
for the induced morphism of $x \in R$ on an object $M \in \cat{T}$, as $x$ is natural in $M$. 

\begin{remark}\label{rem:homological_grading}
Just as one may prefer to work with graded endomorphism rings $\End[\cat{T}]{*}{M}=\bigoplus_{n\in\BZ} \Hom[]{\cat{T}}{\susp^n M}{M}$
instead of $\End[*]{\cat{T}}{M}=\Hom[*]{\cat{T}}{M}{M}$, the definition
\begin{equation*}
 \operatorname{Z}_{*}(\cat{T}) = \bigoplus_{n\in \BZ} \set{ \alpha\colon \susp^n\to \id_{\cat{T}} | \alpha(\susp ) = (-1)^n \susp \alpha}
\end{equation*}
may be more suitable in certain homologically graded contexts. We leave it to the reader to adjust the definitions and results. Note that $\operatorname{Z}_{-*}(\cat{T}) \cong \ctr{\cat{T}}$.
\end{remark}

\subsection{Koszul objects}

Given a sequence $x_1, \ldots, x_t \in R$ we define the \emph{Koszul object} of $M$ with respect to $\bmx = x_1, \ldots, x_t$ as
\begin{equation*}
\kosobj{M}{\bmx} = \kosobj{M}{(x_1, \ldots, x_t)} \colonequals \begin{cases} 
M & t = 0 \\ 
\cone(M \xrightarrow{x} \susp^{|x|} M) & t = 1 \\
\kosobj{(\kosobj{M}{(x_1, \ldots, x_{t-1})})}{x_t} & t \geq 2\,.
\end{cases}
\end{equation*}
By construction, the Koszul object $\kosobj{M}{x}$ comes with an exact triangle
\begin{equation} \label{kosobj_triangle}
M \xrightarrow{x} \susp^{|x|} M \to \kosobj{M}{x} \to \susp M\,.
\end{equation}
We call this the \emph{defining exact triangle} of $\kosobj{M}{x}$.  

\subsection{Koszul object of a product} \label{kos_product}

Let $x,y \in R$. Applying the octahedral axiom to $xy$ on an object $M$ yields the exact triangle
\begin{equation} \label{kos_product_triangle}
\kosobj{M}{y} \to \kosobj{M}{xy} \to \susp^{|y|} \kosobj{M}{x} \to \susp \kosobj{M}{y}
\end{equation}
which is compatible with the defining exact triangles of $\kosobj{M}{x}$, $\kosobj{M}{y}$ and $\kosobj{M}{xy}$ in the sense that the following diagram commutes
\begin{equation} \label{kos_product_diagram}
\begin{tikzcd}[column sep=tiny]
M \ar[rr,bend left,"xy"] \ar[dr,"y"] &[+2em]& \susp^{|x|+|y|} M \ar[rr,bend left] \ar[dr] && \susp^{|y|} \kosobj{M}{x} \ar[rr,bend left] \ar[dr] && \susp \kosobj{M}{y} \nospacepunct{.} \\
& \susp^{|y|} M \ar[ur,"x"] \ar[dr] && \kosobj{M}{xy} \ar[ur,dashed] \ar[dr] && \susp^{|y|+1} M \ar[ur] \\
&& \kosobj{M}{y} \ar[ur,dashed] \ar[rr,bend right] && \susp M \ar[ur,"y"]
\end{tikzcd}
\end{equation}

\subsection{Linear functors}

An exact functor $\sF \colon \cat{S} \to \cat{T}$ of $R$-linear triangulated categories is \emph{$R$-linear}, if the induced map
\begin{equation*}
\Hom[*]{\cat{S}}{M}{N} \to \Hom[*]{\cat{T}}{\sF(M)}{\sF(N)}
\end{equation*}
is a morphism of graded $R$-modules for any $M,N \in \cat{S}$. In this situation one has $\sF(\kosobj{M}{x}) \cong \kosobj{\sF(M)}{x}$ for $x \in R$ as $\sF(x(M))$ is the composite of $x(\sF(M))$ and $\sF(\susp^{|x|}M) \cong \susp^{|x|}\sF(M)$. For example, the suspension functor $\sF = \susp$ together with the natural isomorphism $(-\id) \colon \sF \susp\to \susp \sF$ is exact and $R$-linear. 

\subsection{Functoriality} \label{kos_functorial}

The Koszul object is unique up to non-unique isomorphism. In particular, it is not functorial. Explicitly, a morphism $M \to N$ in $\cat{T}$ induces a morphism $\kosobj{M}{x} \to \kosobj{N}{x}$ that fits into a morphism of exact triangles
\begin{equation} \label{kosobj_induced_map}
\begin{tikzcd}
M \ar[r,"x"] \ar[d] & \susp^{|x|} M \ar[r] \ar[d] & \kosobj{M}{x} \ar[r] \ar[d,dashed] & \susp M \ar[d] \\
N \ar[r,"x"] & \susp^{|x|} N \ar[r] & \kosobj{N}{x} \ar[r] & \susp N \nospacepunct{.}
\end{tikzcd}
\end{equation} 
Yet, the induced morphism is not unique, and it might not be possible to extend the morphism of exact triangle to a 3$\times$3-diagram in which all rows and columns are exact triangles. 

However, there are classes of elements whose Koszul object is functorial. One such class of elements is induced by the action of a tensor triangulated category; see \cite[4.6]{Letz/Stephan:2025}. For an example of a Koszul object that is not functorial, see \cite[Example~4.7]{Letz/Stephan:2025}. 

\begin{definition}
We say an element $x \in R$ is \emph{Koszul-exact} if the assignment $M \mapsto \kosobj{M}{x}$ on objects can be made into an $R$-linear exact functor such that
\begin{equation*}
\id \xrightarrow{x} \susp^{|x|} \to \kosobj{-}{x} \to \susp
\end{equation*}
is a functorial exact triangle. This means, for every object $M$ there is an exact triangle \cref{kosobj_triangle} and the maps are natural transformations of exact functors.

In particular, the morphism of exact triangles \cref{kosobj_induced_map} can be canonically extended to a 3$\times$3-diagram in which all columns and rows are exact triangles.
\end{definition}

\subsection{Commutativity} \label{kos_obj_commute}

Let $x,y \in R$. If $x$ or $y$ is Koszul-exact, then there exists an isomorphism
\begin{equation*}
\kosobj{M}{(x,y)} \to \kosobj{M}{(y,x)}
\end{equation*}
for any $M \in \cat{T}$ that is compatible with the canonical morphisms of the Koszul object; that is the following diagrams (anti)commute
\begin{equation} \label{Kos_functor_compatible}
\begin{tikzcd}[column sep=0]
\susp^{-2} \kosobj{M}{(x,y)} \ar[rr,"\cong"] \ar[d] && \susp^{-2} \kosobj{M}{(y,x)} \ar[d] \\
\susp^{-1} \kosobj{M}{x} \ar[dr] && \susp^{-1} \kosobj{M}{y} \ar[dl] \\
& M
\end{tikzcd}
\quad \text{and} \quad
\begin{tikzcd}[column sep=-1em]
& \susp^{|x|+|y|} M \ar[dl] \ar[dr] \ar[dd,phantom,"(-1)",pos=0.6] \\
\susp^{|y|} \kosobj{M}{x} \ar[d] && \susp^{|x|} \kosobj{M}{y} \ar[d] \\
\kosobj{M}{(x,y)} \ar[rr,"\cong"] && \kosobj{M}{(y,x)} \nospacepunct{.}
\end{tikzcd}
\end{equation}

\subsection{Opposite category}

Let $\cat{T}$ be a triangulated category. Then its opposite category $\op{\cat{T}}$ is a triangulated category with suspension $\susp^{-1}$ and exact triangles $(\op{f},\op{g},\op{h})$ whenever $(-h,-g,-f)$ is exact in $\cat{T}$. 

We assume $\cat{T}$ is $R$-linear for a graded-commutative ring $R$. Set
\begin{equation*}
\op{x}(M) \colonequals \susp^{-|x|} \op{(x(M))} \colon M \to \susp^{-|x|} M\,.
\end{equation*}
This defines an element in $\ctr{\op{\cat{T}}}$. As $\op{(xy)}(M) = (\op{y} \op{x})(M)$, the opposite category $\op{\cat{T}}$ is $\op{R}$-linear. Moreover, this yields an isomorphism
\begin{equation*}
\op{\ctr{\cat{T}}} \cong \ctr{\op{\cat{T}}}
\end{equation*} 
of the graded centers of a triangulated category and its opposite triangulated category.

\subsection{Productive elements}

Let $R$ be a graded-commutative ring acting on a triangulated category $\cat T$. We are interested in elements $x\in R$ that vanish on their Koszul objects, that is, such that $x(\kosobj{M}{x})=0$ for any $M\in \cat T$. This does not hold in general; see, for example, \cite[Proposition~4]{Schwede:2010}. 

We say an element $x \in R$ is \emph{$M$-productive} for an object $M$ on $\cat{T}$ if $x(\kosobj{M}{x}) = 0$. If $x$ is $M$-productive for all objects $M$, then $x$ is called \emph{productive}. This notation is taken from the corresponding property in group cohomology; compare \cite{Carlson:1987,Carlson:1996}.

\begin{lemma} \label{productive_product}
Let $R$ be a graded-commutative ring acting on a triangulated category $\cat T$. If $x,y \in R$ are productive, then so is $xy$. 
\end{lemma}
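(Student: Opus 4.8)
I must show that $xy$ is $M$-productive for every $M \in \cat{T}$, that is, $xy(\kosobj{M}{xy}) = 0$. Write $K = \kosobj{M}{xy}$ for brevity. The key tool is the product triangle \cref{kos_product_triangle},
\begin{equation*}
\kosobj{M}{y} \xrightarrow{a} K \xrightarrow{b} \susp^{|y|} \kosobj{M}{x} \xrightarrow{c} \susp \kosobj{M}{y}\,,
\end{equation*}
together with the fact that in the graded center the product $xy$ is realized on any object $N$ as the composite $xy(N) = \susp^{|x|}(y(N)) \circ x(N)$. The strategy is a two-step sandwich: use productivity of $x$ to factor $x(K)$ through $\susp^{|x|}(a)$, and use productivity of $y$ to show that $\susp^{|x|}(y(K))$ annihilates $\susp^{|x|}(a)$. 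Applying the factorization of $xy(K)$ then forces it to vanish.

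\textbf{Step 1 (productivity of $x$).} By naturality of the central element $x$ applied to $b \colon K \to \susp^{|y|}\kosobj{M}{x}$, the composite $\susp^{|x|}(b) \circ x(K)$ agrees up to sign with $x(\susp^{|y|}\kosobj{M}{x}) \circ b$. Since $x$ is productive, $x(\kosobj{M}{x}) = 0$, and because the suspension functor is $R$-linear this gives $x(\susp^{|y|}\kosobj{M}{x}) = 0$; hence $\susp^{|x|}(b) \circ x(K) = 0$. Applying the cohomological functor $\Hom{\cat{T}}{K}{-}$ to the $|x|$-fold suspension of the product triangle and using exactness at $\Hom{\cat{T}}{K}{\susp^{|x|} K}$, I conclude that $x(K)$ factors as $x(K) = \susp^{|x|}(a) \circ h$ for some morphism $h \colon K \to \susp^{|x|} \kosobj{M}{y}$.

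\textbf{Step 2 (productivity of $y$) and conclusion.} By naturality of $y$ applied to $a \colon \kosobj{M}{y} \to K$, the composite $y(K) \circ a$ agrees up to sign with $\susp^{|y|}(a) \circ y(\kosobj{M}{y})$, and productivity of $y$ gives $y(\kosobj{M}{y}) = 0$, so $y(K) \circ a = 0$. Applying $\susp^{|x|}(-)$ yields $\susp^{|x|}(y(K)) \circ \susp^{|x|}(a) = \susp^{|x|}(y(K) \circ a) = 0$. Combining with Step 1,
\begin{equation*}
xy(K) = \susp^{|x|}(y(K)) \circ x(K) = \susp^{|x|}(y(K)) \circ \susp^{|x|}(a) \circ h = 0\,,
\end{equation*}
which is what I wanted. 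The only point requiring care is the sign bookkeeping coming from the graded-commutativity relation $\alpha(\susp) = (-1)^n \susp \alpha$ in the center and from $xy = \pm yx$; but since every step only asserts that a composite \emph{vanishes}, these signs are harmless, and the substantive content is exactly the two factorizations above. I therefore expect no real obstacle, only the need to state the naturality squares and the identification of the product in $\ctr{\cat{T}}$ precisely.
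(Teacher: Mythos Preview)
Your argument is correct and is precisely the ``standard diagram chase'' the paper alludes to: the paper's proof is the single sentence ``Apply $xy$ to the exact triangle \eqref{kos_product_triangle}. The claim now follows from a standard diagram chase,'' and your two-step factorization (use productivity of $x$ to lift $x(K)$ through $\susp^{|x|}(a)$ via exactness, then use productivity of $y$ to kill the composite with $\susp^{|x|}(y(K))$) is exactly how one makes that sentence honest. There is no substantive difference in approach.
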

\begin{proof}
Apply $xy$ to the exact triangle \cref{kos_product_triangle}. The claim now follows from a standard diagram chase.
\end{proof}

\begin{remark}
For a natural number $n$ and any triangulated category $\cat T$ considered as a $\BZ$-linear category via the canonical morphism $\BZ\to \ctr{\cat T}$, the vanishing of $n\colon \kosobj{M}{n}\to \kosobj{M}{n}$ can be expressed via the $n$-order introduced in \cite{Schwede:2010, Schwede:2013}. The morphism $n$ on $\kosobj{M}{n}$ is zero if and only if the $n$-order of $\kosobj{M}{n}$ is at least one. It is natural to extend the notion of order to any $R$-linear triangulated category to study the vanishing of $x(\kosobj{M}{x})$ for any $x\in R$. This notion of $x$-order appears in \cite{Langer:2009}.
\end{remark}

\section{The local cohomology functor} \label{sec:local_coh}

We first recall the notions of the Zariski spectrum and of compact objects in triangulated categories, as they are used in the definition of the local cohomology functor. Benson, Iyengar and Krause connected the local cohomology functor associated to a principal ideal $(x)$ to the Koszul object with respect to $x$ in \cite[Proposition~2.9]{Benson/Iyengar/Krause:2011b}. We extend this connection to a sequence of elements. 

\subsection{Zariski spectrum}

The Zariski spectrum of a graded-commutative ring $R$, denoted by $\Spec(R)$, is the set of homogeneous prime ideals of $R$ with the topology given by the closed sets
\begin{equation*}
\cV(\fa) \colonequals \Set{\fp \in \Spec(R) | \fa \subseteq \fp}
\end{equation*}
for any homogeneous ideal $\fa \subseteq R$. 

A subset $\cV \subseteq \Spec(R)$ is \emph{specialization closed} if $\fp \in \cV$ and $\fp \subseteq \fq$ imply $\fq \in \cV$. In fact, a set is specialization closed if and only if it is the union of closed sets.

\subsection{Compact objects}

An object $C$ in a triangulated category $\cat{T}$ is called \emph{compact}, if $\Hom{\cat T}{C}{-}$ preserves small coproducts. The full subcategory of compact objects $\compact{\cat{T}}$ is triangulated and closed under direct summands. A triangulated category is \emph{compactly generated}, if it has all small coproducts and there is a set of compact objects $\cat{C}$ such that $\cat{T} = \loc(\cat{C})$; that is $\cat{T}$ is the smallest triangulated subcategory that is closed under small coproducts and contains $\cat{C}$. For a compactly generated triangulated category $\cat{T}$, the subcategory of compact objects $\compact{\cat{T}}$ is essentially small.

For a compact object $C$, one can view $\ch{C}{-} \colonequals \Hom[*]{\cat{T}}{C}{-}$ as a homology theory; this means it is a homological functor and preserves small coproducts. In a compactly generated triangulated category, an object $M$ is zero if and only if $\ch{C}{M} = 0$ for all compact objects $C$; see for example \cite[Lemma~2.2.1]{Schwede/Shipley:2003}. 

\subsection{Local cohomology functor}

We recall the construction of the local cohomology functor of Benson, Iyengar, and Krause from \cite{Benson/Iyengar/Krause:2008,Benson/Iyengar/Krause:2011b}.

Let $R$ be a graded-commutative noetherian ring and $\cat{T}$ a compactly generated $R$-linear triangulated category. For each specialization closed subset $\cV$ of $\Spec(R)$, the full subcategory of $\cat{T}$ of \emph{$\cV$-torsion} objects is
\begin{equation*}
\cat{T}_\cV \colonequals \loc(\Set{C \in \compact{\cat{T}} | \End[*]{\cat{T}}{C}_\fp = 0 \text{ for all } \fp \notin \cV})\,;
\end{equation*}
this agrees with the definition from \cite[Section~4]{Benson/Iyengar/Krause:2008} by \cite[Theorem~6.4]{Benson/Iyengar/Krause:2008}. The category $\cat{T}_\cV$ is a compactly generated localizing subcategory of $\cat{T}$ and the inclusion functor $\cat{T}_\cV \to \cat{T}$ has a right adjoint by \cite[5.3]{Krause:2010}. Then Bousefield localization yields a localization functor $\rL_\cV \colon \cat{T} \to \cat{T}$ with $\ker(\rL_\cV) = \cat{T}_\cV$ and a colocalization functor $\Gamma_{\cV} \colon \cat{T} \to \cat{T}$ with essential image $\operatorname{im}(\Gamma_{\cV}) = \cat{T}_{\cV}$ and there is a functorial exact triangle
\begin{equation*}
\Gamma_{\cV} \to \id \to \rL_{\cV} \to \susp \Gamma_{\cV}\,;
\end{equation*}
see \cite[Section~4.11]{Krause:2010}. The functor $\Gamma_{\cV}$ is called the \emph{local cohomology functor} associated to $\cV$. 

When $\cV = \cV(x_1, \ldots, x_t)$ for $x_1, \ldots, x_t \in R$, then
\begin{equation*}
\cat{T}_\cV = \loc(\Set{\kosobj{C}{(x_1, \ldots, x_t)} | C \in \compact{\cat{T}}})
\end{equation*}
as the compact generating sets coincide by \cite[Lemma~3.9, Proposition~3.10]{Benson/Iyengar/Krause:2015}.

\subsection{Support} \label{support}

For a homogeneous prime ideal $\fp$ of $R$, let
\[
\cZ(\fp) \colonequals \set{\fq \in \Spec(R) | \fq \nsubseteq \fp}
\]
and $\Gamma_\fp \colonequals \Gamma_{\cV(\fp)}\rL_{\cZ(\fp)}$. The \emph{support} of an object $M$ of $\cat{T}$ is defined as
\[
\supp_R(M) \colonequals \set{\fp\in \Spec(R) | \Gamma_\fp M \neq 0}\,.
\]
If $\cat{T}$ has a compact generator $C$ such that $\ch{C}{M}$ is a finitely generated graded $R$-module, then
\[
\supp_R(M) = \set{\fp \in \Spec(R) | \ch{C}{M}_\fp\neq 0} = \cV(\ann_R \ch{C}{M})
\,,
\]
where $\ann_R N$ denotes the annihilator ideal of a graded $R$-module $N$; see \cite[Lemma~2.2, Theorem~5.5(1)]{Benson/Iyengar/Krause:2008}.

\subsection{Linear functors} \label{linear_functors}

By construction, the local cohomology functor is the precomposition of the inclusion $\cat{T}_\cV \to \cat{T}$ with its right adjoint. As the inclusion is $R$-linear, the local cohomology functor is $R$-linear by the following \namecref{adj_linear}.

\begin{lemma} \label{adj_linear}
Let $\sF \colon \cat{C} \to \cat{D}$ be a functor between $R$-linear triangulated categories with a right adjoint $\sG \colon \cat{D} \to \cat{C}$. Then $\sF$ is $R$-linear if and only if $\sG$ is $R$-linear. 
\end{lemma}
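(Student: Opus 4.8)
The plan is to exploit the standard description of the $R$-linear structure in terms of the graded center and to use the adjunction unit and counit to transport naturality. First I would recall that $\sF$ being $R$-linear means precisely that for each $x \in R$ of degree $n$, the induced central element $\alpha_x \colon \id \to \susp^n$ in $\ctr{\cat C}$ is compatible with the corresponding element in $\ctr{\cat D}$ under $\sF$; concretely, $\sF(x(M)) = x(\sF M)$ after identifying $\sF \susp^n \cong \susp^n \sF$, for every object $M$. Equivalently, this says that the map on graded Hom-modules is $R$-semilinear, i.e.\ $\sF(x \cdot f) = x \cdot \sF(f)$ for all morphisms $f$. I would set up the symmetric statement for $\sG$ and observe that by the symmetry of the adjunction (that is, $\sG$ is right adjoint to $\sF$ iff $\sF^{\mathrm{op}}$ is right adjoint to $\sG^{\mathrm{op}}$, combined with the identification $\op{\ctr{\cat T}} \cong \ctr{\op{\cat T}}$ from the opposite-category subsection), it suffices to prove one implication; the converse follows by dualizing.

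The key computation uses the adjunction isomorphism $\Phi \colon \Hom{\cat D}{\sF A}{B} \xrightarrow{\cong} \Hom{\cat C}{A}{\sG B}$, which is natural in both variables. Assuming $\sF$ is $R$-linear, I want to show that for $x \in R$ and a morphism $g \colon B \to \susp^n B'$ in $\cat D$, the induced map $\sG(g) \colon \sG B \to \susp^n \sG B'$ satisfies $\sG(x \cdot g) = x \cdot \sG(g)$, where $x \cdot g$ denotes the action via the central element. The strategy is to test against an arbitrary morphism $A \to \sG B$ in $\cat C$, which under $\Phi^{-1}$ corresponds to a morphism $\sF A \to B$ in $\cat D$, and then to push the scalar $x$ through the adjunction. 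The crucial point is that the central element $x$ acts as a natural transformation of the identity functor, so it commutes past both $\Phi$ and $\Phi^{-1}$ by naturality of the adjunction isomorphism together with the $R$-linearity of $\sF$. Chasing the two ways of composing, the naturality squares for $\Phi$ force $\sG(x \cdot g) = x \cdot \sG(g)$, since the $R$-linearity of $\sF$ converts the action of $x$ on the $\cat D$-side into the action of $x$ on the $\cat C$-side in a manner compatible with the unit and counit.

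The main obstacle I anticipate is bookkeeping the suspension identifications and signs. Because the central element $x$ of degree $n$ involves the natural isomorphism $\sF \susp^n \cong \susp^n \sF$ (and the analogous one for $\sG$), I must verify that these structure isomorphisms for $\sF$ and $\sG$ are themselves compatible under the adjunction. This compatibility is essentially the statement that the adjunction $(\sF, \sG)$ is an adjunction of triangulated (hence suspension-commuting) functors; concretely, the isomorphism $\sG \susp \cong \susp \sG$ is the mate of $\sF \susp \cong \susp \sF$ under the adjunction. Once this mate relation is in place, the sign condition $\alpha(\susp) = (-1)^n \susp \alpha$ defining the graded center is preserved automatically, and the scalar action passes cleanly through $\Phi$. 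I would therefore isolate this mate compatibility as the technical heart of the argument and reduce the rest to a routine naturality diagram chase, invoking the symmetry via the opposite category to obtain the reverse implication without repeating the computation.
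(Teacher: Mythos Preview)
Your proposal is correct and follows essentially the same approach as the paper: reduce to one implication via the opposite category, then use the adjunction isomorphism together with naturality of $x$ and $R$-linearity of $\sF$ to deduce $R$-linearity of $\sG$.

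The paper's execution is a bit more streamlined than your plan. Rather than testing $\sG(x\cdot g)=x\cdot \sG(g)$ against arbitrary $A\to \sG B$, the paper first shows directly that the adjunction isomorphism $\Hom[*]{\cat{C}}{M}{\sG(N)}\cong \Hom[*]{\cat{D}}{\sF(M)}{N}$, written explicitly as $f\mapsto \epsilon(N)\sF(f)$, is $R$-linear (because $x$ commutes with the counit $\epsilon$ by naturality and with $\sF$ by hypothesis). It then observes that the map $\Hom[*]{\cat{D}}{L}{N}\to \Hom[*]{\cat{C}}{\sG(L)}{\sG(N)}$ factors as precomposition by the counit followed by this adjunction isomorphism, both of which are $R$-linear. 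This sidesteps the need for your element-by-element Yoneda-style testing and absorbs the suspension/mate bookkeeping you flagged into the single observation that $x$ is a natural transformation and hence commutes with $\epsilon$; the sign conventions never need to be unpacked explicitly.
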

\begin{proof}
We show the forwards direction, the backwards direction holds by the same argument in $\op{\cat{T}}$. 
We write $\epsilon \colon \sF\sG\to \id$ for the counit of the adjunction. The adjunction isomorphism
\[
\Hom{\cat{C}}{M}{\sG(N)}\cong \Hom{\cat{D}}{\sF(M)}{N} \,, \quad f \mapsto \epsilon(N) \sF(f)
\]
induces an isomorphism
\[
\Hom[*]{\cat{C}}{M}{\sG(N)}\cong \Hom[*]{\cat{D}}{\sF(M)}{N}\,.
\]
As $x \in R$ is natural, it commutes with $\epsilon(N)$, and as $\sF$ is $R$-linear, $x$ also commutes with $\sF$. Hence the isomorphism is $R$-linear.

It follows that the morphism $\Hom[*]{\cat{D}}{L}{N} \to \Hom[*]{\cat{C}}{\sG(L)}{\sG(N)}$ induced by $\sG$ is $R$-linear since it is the composite of $R$-linear maps
\[
\Hom[*]{\cat{D}}{L}{N}\to \Hom[*]{\cat{D}}{\sF\sG(L)}{N}\cong \Hom[*]{\cat{C}}{\sG(L)}{\sG(N)}\,. \qedhere
\]
\end{proof}

\begin{remark}
The Brown representability theorem for compactly generated triangulated categories by Neeman \cite{Neeman:1996} states that any cohomological functor $\sH \colon \cat{T} \to \op{\abcat}$ that preserves small coproducts is naturally isomorphic, as abelian groups, to $\Hom{\cat{T}}{-}{M}$ for some $M \in \cat{T}$. Slightly adjusting the proof should show that, when $\cat{T}$ is $R$-linear and $\sH \colon \cat{T} \to \op{(\Mod{R})}$ a cohomological functor, then the natural isomorphism of $\sH$ and $\Hom{\cat{T}}{-}{M}$ is $R$-linear. Since the local cohomology functor is constructed using Brown representability, this would yield an alternative proof that the local cohomology functor is $R$-linear. 
\end{remark}

\subsection{Homotopy colimits}

Let $M_1 \xrightarrow{f_1} M_2 \xrightarrow{f_2} M_3 \to \cdots$ be a sequence of morphisms in a triangulated category $\cat{T}$ with small coproducts. The \emph{homotopy colimit} $\hocolim_n M_n$ of the sequence is defined via the exact triangle
\begin{equation*}
\coprod_n M_n \xrightarrow{\id-f_n} \coprod_n M_n \to \hocolim_n M_n \to \susp \coprod_n M_n\,.
\end{equation*}
The homotopy colimit is equipped with canonical morphisms
\begin{equation} \label{hpty_colimit_can_map}
M_\ell \to \coprod_n M_n \to \hocolim_n M_n
\end{equation}
for any $\ell \geq 1$. Given morphisms $M_\ell \to N$ for every $\ell \geq 1$ compatible with the $f_\ell$'s, there exists a non-unique morphism $\hocolim_n M_n \to N$ such that the following diagram commutes
\begin{equation*}
\begin{tikzcd}
M_\ell \ar[dr] \ar[d] \\
\hocolim_n M_n \ar[r] & N
\end{tikzcd}
\end{equation*}
for every $\ell \geq 1$.

While the homotopy colimit is not a colimit, for any compact object $C$ in $\cat{T}$ one has
\begin{equation} \label{hocolim_colim_compact}
\ch{C}{\hocolim_n M_n} \cong \colim_n \ch{C}{M_n}\,;
\end{equation}
see \cite[Lemma~3.4.3]{Krause:2022}.

\subsection{Connection between local cohomology and Koszul objects} \label{Kos_obj_Gamma}

Let $R$ be a graded-commutative noetherian ring and $\cat{T}$ a compactly generated $R$-linear triangulated category. By \cite[Proposition~2.9]{Benson/Iyengar/Krause:2011b}, there exist isomorphisms
\begin{equation} \label{Kos_obj_Gamma_one}
\begin{gathered}
\hocolim (M \xrightarrow{x} \susp^{|x|} M \xrightarrow{x} \susp^{2|x|} M \to \ldots) \xrightarrow{\cong} \rL_{\cV(x)} M \quad \text{and} \\
\quad \susp^{-1} \hocolim (\kosobj{M}{x} \to \kosobj{M}{x^2} \to \ldots) \xrightarrow{\cong} \Gamma_{\cV(x)} M
\end{gathered}
\end{equation}
for any $x \in R$ and $M \in \cat{T}$.

In the following we generalize the isomorphisms \cref{Kos_obj_Gamma_one} to Koszul objects with respect to a sequence of elements $x_1, \ldots, x_t \in R$. As the homotopy colimit is not a colimit, this cannot be simply deduced from the isomorphisms for one element. 

Let $x_1, \ldots, x_t$ be a sequence of elements in $R$ for $t \geq 1$. For convenience we write $\bmx_s = x_1, \ldots, x_s$ and $\bmx_s^n = x_1^n, \ldots, x_s^n$ for any $n \geq 1$ and $0 \leq s \leq t$. First we construct a sequence of morphisms of exact triangles. We inductively define morphisms $u_{s,n} \colon \susp^{-s} \kosobj{M}{\bmx_s^n} \to \susp^{-s} \kosobj{M}{\bmx_s^{n+1}}$ for $0 \leq s \leq t$. We set $u_{0,n} = \id_M$. Given $u_{s-1,n}$, we let $u_{s,n}$ be a morphism that completes the commutative diagram
\begin{equation} \label{Kos_obj_Gamma:mor_u}
\begin{tikzcd}[column sep=small]
\susp^{-s} \kosobj{M}{\bmx_s^n} \ar[r] \ar[d,"{u_{s,n}}",dashed] &[-.2em] \susp^{-s+1} \kosobj{M}{\bmx_{s-1}^n} \ar[r,"{x_s^n}"] \ar[d,"{u_{s-1,n}}"] & \susp^{n|x_s|-s+1} \kosobj{M}{\bmx_{s-1}^n} \ar[r] \ar[d,"{x_s \susp^{|x_s|} u_{s-1,n}}"] &[-.2em] \susp^{-s+1} \kosobj{M}{\bmx_s^n} \ar[d,"{\susp u_{s,n}}",dashed] \\
\susp^{-s} \kosobj{M}{\bmx_s^{n+1}} \ar[r] & \susp^{-s+1} \kosobj{M}{\bmx_{s-1}^{n+1}} \ar[r,"{x_s^{n+1}}"] & \susp^{(n+1)|x_s|-s+1} \kosobj{M}{\bmx_{s-1}^{n+1}} \ar[r] & \susp^{-s+1} \kosobj{M}{\bmx_s^{n+1}}
\end{tikzcd}
\end{equation}
to a morphism of exact triangles. We let $V_{t,n}$ be the cone of the composition
\begin{equation} \label{Kos_obj_Gamma:composition_V}
\susp^{-t} \kosobj{M}{(x_1^n, \ldots, x_t^n)} \to \susp^{-(t-1)} \kosobj{M}{(x_1^n, \ldots, x_{t-1}^n)} \to \cdots \to \susp^{-1} \kosobj{M}{x_1^n} \to M
\end{equation}
and let $V_{t,n} \to V_{t,n+1}$ be a morphism that completes the commutative diagram
\begin{equation} \label{Kos_obj_Gamma:mor_v}
\begin{tikzcd}
\susp^{-t} \kosobj{M}{\bmx_t^n} \ar[r] \ar[d,"u_{t,n}"] & M \ar[r] \ar[d,"="] & V_{t,n} \ar[r] \ar[d,"{v_{t,n}}",dashed] & \susp^{-t+1} \kosobj{M}{\bmx_t^n} \ar[d,"{\susp u_{t,n}}"] \\
\susp^{-t} \kosobj{M}{\bmx_t^{n+1}} \ar[r] & M \ar[r] & V_{t,n+1} \ar[r] & \susp^{-t+1} \kosobj{M}{\bmx_t^{n+1}}
\end{tikzcd}
\end{equation}
to a morphism of exact triangles. 

\begin{theorem} \label{Gamma_hocolim}
Let $R$ be a graded-commutative noetherian ring and $\cat{T}$ a compactly generated $R$-linear triangulated category. For any sequence of elements $x_1, \ldots, x_t$ in $R$, there exist isomorphisms
\begin{equation*}
\begin{gathered}
\hocolim_n V_{t,n} \xrightarrow{\cong} \rL_{\cV(x_1, \ldots, x_t)} M \quad \text{and} \\
\hocolim_n \susp^{-t} \kosobj{M}{(x_1^n, \ldots, x_t^n)} \xrightarrow{\cong} \Gamma_{\cV(x_1, \ldots, x_t)} M
\end{gathered}
\end{equation*}
making
\begin{equation} \label{Kos_obj_Gamma_can_mor}
\begin{tikzcd}
\susp^{-t} \kosobj{M}{(x_1^\ell, \ldots, x_t^\ell)} \ar[r] \ar[d] & M \ar[dd,"="] \ar[r] & V_{t,\ell} \ar[d] \\
\hocolim_n \susp^{-t} \kosobj{M}{(x_1^n, \ldots, x_t^n)}\ar[d,"\cong"] & & \hocolim_n V_{t,n}\ar[d,"\cong"] \\
\Gamma_{\cV(x_1, \ldots, x_t)} M \ar[r] & M \ar[r] & \rL_{\cV(x_1, \ldots, x_t)} M 
\end{tikzcd}
\end{equation}
commute for any $\ell \geq 1$.
\end{theorem}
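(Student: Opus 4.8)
\emph{Proof plan.}
The plan is to induct on $t$. The base case $t=1$ is \cref{Kos_obj_Gamma_one}: the $\Gamma$-isomorphism is immediate, and for the $\rL$-isomorphism one rotates the defining triangle of $\kosobj{M}{x_1^n}$ to identify $V_{1,n}\cong\susp^{n|x_1|}M$ with transition maps $x_1$, so that $\hocolim_n V_{1,n}$ is the telescope computing $\rL_{\cV(x_1)}M$. Throughout write $K_{s,n}=\susp^{-s}\kosobj{M}{\bmx_s^n}$, set $\cV=\cV(\bmx_t)$ and $\cV'=\cV(\bmx_{t-1})$, and put $A=\hocolim_n K_{t,n}$ and $B=\hocolim_n V_{t,n}$ for the two objects to be identified. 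The starting observation is that a homotopy colimit of a ladder of exact triangles is again an exact triangle, and that $\hocolim$ of the constant sequence $M\xrightarrow{\id}M\to\cdots$ is $M$. Applying this to the ladder \cref{Kos_obj_Gamma:mor_u} in top degree $s=t$ produces an exact triangle (\emph{Triangle~I})
\[
A \longrightarrow \hocolim_n K_{t-1,n} \longrightarrow W \longrightarrow \susp A\,, \qquad W \colonequals \hocolim_n \susp^{n|x_t|} K_{t-1,n}\,,
\]
while applying it to \cref{Kos_obj_Gamma:mor_v} produces the exact triangle $A\to M\to B\to\susp A$ whose map $A\to M$ is the homotopy colimit of the canonical maps $K_{t,\ell}\to M$; this is what will give the outer commuting squares of \cref{Kos_obj_Gamma_can_mor} once $A$ and $B$ are identified.

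The heart of the induction is to recognize Triangle~I as the $\cV(x_t)$-localization triangle of $N\colonequals\hocolim_n K_{t-1,n}\cong\Gamma_{\cV'}M$, the last isomorphism being the inductive hypothesis. Since the Bousfield localization triangle of $N$ is the unique exact triangle under $N$ whose left term is $\cV(x_t)$-torsion and whose right term is $\cV(x_t)$-local, it suffices to verify these two properties for $A$ and $W$. For $A$ I would first establish the auxiliary lemma that $\kosobj{P}{x}\in\cat{T}_{\cV(x)}$ for \emph{every} object $P$: the class of such $P$ is localizing, since Koszul objects commute with coproducts and a triangle $P'\to P\to P''$ induces a triangle of Koszul objects by the $3\times 3$-lemma (the multiplication by $x$ being natural), and it contains $\compact{\cat{T}}$, whose Koszul objects generate $\cat{T}_{\cV(x)}$; as $\cat{T}=\loc(\compact{\cat{T}})$ this proves the lemma. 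Each $K_{t,n}$ is a shift of $\kosobj{(\kosobj{M}{\bmx_{t-1}^n})}{x_t^n}$, hence lies in $\cat{T}_{\cV(x_t^n)}=\cat{T}_{\cV(x_t)}$, and $\cat{T}_{\cV(x_t)}$ is closed under homotopy colimits, so $A$ is $\cV(x_t)$-torsion.

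The hard part, and exactly the subtlety flagged before the theorem that ``the homotopy colimit is not a colimit'', is to show that $W$ is $\cV(x_t)$-local; I would do this by proving $W\cong\rL_{\cV(x_t)}N$. The first isomorphism of \cref{Kos_obj_Gamma_one} applied to the single element $x_t$ and the object $N$ gives $\rL_{\cV(x_t)}N\cong\hocolim_m\susp^{m|x_t|}N\cong\hocolim_m\hocolim_n\susp^{m|x_t|}K_{t-1,n}$, using that suspension and multiplication by $x_t$ commute with homotopy colimits. Setting $A_{m,n}=\susp^{m|x_t|}K_{t-1,n}$ with horizontal maps $x_t$ and vertical maps $\susp^{m|x_t|}u_{t-1,n}$, the object $W$ is the homotopy colimit along the diagonal, whose transition maps $x_t\,\susp^{|x_t|}u_{t-1,n}$ are precisely those appearing in \cref{Kos_obj_Gamma:mor_u}. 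The weak universal property of the homotopy colimit supplies a comparison map $W\to\hocolim_m\hocolim_n A_{m,n}$, and applying $\ch{C}{-}$ for a compact object $C$ turns both sides into honest filtered colimits by \cref{hocolim_colim_compact}; the comparison then becomes the cofinality isomorphism for the diagonal $\BN\hookrightarrow\BN\times\BN$, hence is an isomorphism on all $\ch{C}{-}$ and therefore in $\cat{T}$. I expect this double–homotopy-colimit step to require the most care, in constructing the comparison map and in verifying the cofinality; note, however, that because we invoke uniqueness of the localization triangle we only need $W$ to be \emph{some} local object, so no matching of connecting maps is needed here.

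Finally I would assemble the conclusion. Uniqueness of the localization triangle identifies $A\cong\Gamma_{\cV(x_t)}N$ with the map $A\to N$ equal to the counit, and the composition formula $\Gamma_{\cV'\cap\cV(x_t)}\cong\Gamma_{\cV(x_t)}\Gamma_{\cV'}$ of \cite{Benson/Iyengar/Krause:2008} together with $\cV=\cV'\cap\cV(x_t)$ gives $A\cong\Gamma_\cV M$. Since the composition \cref{Kos_obj_Gamma:composition_V} factors through $K_{t-1,n}\to M$, the map $A\to M$ in the triangle $A\to M\to B$ equals the composite $\Gamma_\cV M\to\Gamma_{\cV'}M\to M$ of counits (using the inductive compatibility in \cref{Kos_obj_Gamma_can_mor} for the map $N\to M$), which is the counit $\Gamma_\cV M\to M$. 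Its cone is $\rL_\cV M$, so $B\cong\rL_\cV M$, and these identifications are by construction compatible with all the maps to and from $M$ and with the canonical maps out of $K_{t,\ell}$ and $V_{t,\ell}$, yielding the commutativity of \cref{Kos_obj_Gamma_can_mor}.
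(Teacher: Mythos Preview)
Your approach is correct and genuinely different from the paper's. The paper never takes the homotopy colimit of the ladder of triangles directly; instead it works throughout at the level of graded $R$-modules, proving by induction that
\[
\colim_n \ch{C}{\Gamma_{\cV(\bmx_t)}\susp^{-t}\kosobj{M}{\bmx_t^n}}\cong\ch{C}{\Gamma_{\cV(\bmx_t)}M}
\]
for every compact $C$ (the key input being that $\ch{C}{\Gamma_{\cV(x_t)}(-)}$ is $x_t$-torsion, so the colimit of the third column of \eqref{Kos_obj_Gamma:mor_u} vanishes), and only then lifts to isomorphisms of objects using support arguments for $\rL_{\cV}$ and $\Gamma_{\cV}$. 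Your route---form Triangle~I as the hocolim of \eqref{Kos_obj_Gamma:mor_u}, then recognise it as the Bousfield triangle of $N=\Gamma_{\cV'}M$ at $\cV(x_t)$ via uniqueness---is more conceptual and makes the inductive structure transparent. What the paper's approach buys is that it sidesteps the delicate point you flag: since it never forms $W$ as an object, it never needs a double-hocolim/cofinality comparison.

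Two places in your sketch deserve more care. First, the auxiliary lemma that $\kosobj{P}{x}\in\cat{T}_{\cV(x)}$ for all $P$: your $3\times3$ argument requires that the induced map on cofibres in the third column is again multiplication by $x$, which the bare $3\times3$ lemma does not guarantee. Simply cite \cite[Proposition~2.11]{Benson/Iyengar/Krause:2011b}, or argue that $\rL_{\cV(x)}\kosobj{P}{x}\cong\kosobj{(\rL_{\cV(x)}P)}{x}=0$ since $x$ acts invertibly on the telescope. Second, for ``$W$ is $\cV(x_t)$-local'' the cofinality argument does go through---the compatibility $\phi_{n+1}\circ d_n=\phi_n$ follows from naturality of $x_t$ together with the standard identity $\mathrm{can}_k=\mathrm{can}_{k+1}\circ f_k$ for canonical maps into a hocolim---but you should spell this out. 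A shorter alternative, closer in spirit to the paper, is to show $\Gamma_{\cV(x_t)}W=0$ directly: since $\Gamma_{\cV(x_t)}$ preserves coproducts one has $\ch{C}{\Gamma_{\cV(x_t)}W}=\colim_n\ch{C}{\Gamma_{\cV(x_t)}\susp^{n|x_t|}K_{t-1,n}}$, each term is $x_t$-torsion, and the transition maps factor through multiplication by $x_t$, so the colimit vanishes.
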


\begin{remark}
It is not clear whether \eqref{Kos_obj_Gamma_can_mor} is a morphism of exact triangles, that is whether 
\begin{equation*}
\begin{tikzcd}
V_{t,\ell} \ar[r] \ar[d] & \susp^{1-t} \kosobj{M}{(x_1^\ell, \ldots, x_t^\ell)} \ar[d] \\
\rL_{\cV(x_1, \ldots, x_t)} M \ar[r] & \susp \Gamma_{\cV(x_1, \ldots, x_t)} M
\end{tikzcd}
\end{equation*}
commutes.
\end{remark}

\begin{proof}
We proceed in four steps.
\begin{step} \label{Gamma_hocolim:HH_Gamma_Kos}
For any compact object $C$, there is an induced isomorphism
\begin{equation*}
\colim_n \ch{C}{\Gamma_{\cV(\bmx_t)} \susp^{-t} \kosobj{M}{\bmx_t^n}} \cong \ch{C}{\Gamma_{\cV(\bmx_t)} M}\,.
\end{equation*}
\end{step}

We establish \cref{Gamma_hocolim:HH_Gamma_Kos} using induction on $t$. For $t=0$, there is nothing to show as $\Gamma_{\cV(\emptyset)} = \id$. Let $t > 0$. We assume the claim holds for $t-1$. In the following, we implicitly use $\Gamma_{\cV(\bmx_t)} = \Gamma_{\cV(\bmx_{t-1})} \Gamma_{\cV(x_t)} = \Gamma_{\cV(x_t)} \Gamma_{\cV(\bmx_{t-1})}$; see \cite[Proposition~6.1]{Benson/Iyengar/Krause:2008}.

We apply $\ch{C}{\Gamma_{\cV(\bmx_t)} -}$ to \eqref{Kos_obj_Gamma:mor_u} with $s=t$ and take filtered colimits of the columns for $n\geq 1$. Using that filtered colimits of exact sequences of graded $R$-modules are exact (AB5), this yields a long exact sequence. Since $\ch{C}{\Gamma_{\cV(x_t)} N}$ is $(x_t)$-torsion for every object $N$ and $\Gamma_{\cV}$ is $R$-linear for any $\cV$, we obtain for the filtered of the third column
\begin{equation*}
\colim_n \ch{C}{\Gamma_{\cV(\bmx_t)} (\susp^{n|x_t|-t+1} \kosobj{M}{\bmx_{t-1}^n} )} \cong 0\,.
\end{equation*}
Hence we have an isomorphism of the filtered colimits between the first and second columns
\begin{equation*}
\colim_n \ch{C}{\Gamma_{\cV(\bmx_t)} (\susp^{-t} \kosobj{M}{\bmx_t^n})} \cong \colim_n \ch{C}{\Gamma_{\cV(\bmx_t)} (\susp^{-t+1} \kosobj{M}{\bmx_{t-1}^n})}\,.
\end{equation*}

It remains to show that the filtered colimit of the second column is isomorphic to $\ch{C}{\Gamma_{\cV(\bmx_t)} M}$. Applying any $R$-linear exact functor $\sF$ to \eqref{Kos_obj_Gamma:mor_u} yields again a diagram of the form \eqref{Kos_obj_Gamma:mor_u}, but with $M$ replaced by $\sF(M)$. Applying the induction hypothesis to the object $\Gamma_{\cV(x_t)}(M)$, we obtain for the filtered colimit of the second column
\begin{align*}
 \colim_n \ch{C}{\Gamma_{\cV(\bmx_{t-1})} (\susp^{-t+1} \kosobj{(\Gamma_{\cV(x_t)} M)}{\bmx_{t-1}^n})} 
&\cong \ch{C}{\Gamma_{\cV(\bmx_{t-1})} \Gamma_{\cV(x_t)}M} \\
&\cong \ch{C}{\Gamma_{\cV(\bmx_{t})}M}
\end{align*}
as desired.

\begin{step} \label{Gamma_hocolim:HH_Gamma_V} We have
$\colim_n \ch{C}{\Gamma_{\cV(\bmx_t)} V_{t,n}} \cong 0$ for any compact object $C$.
\end{step}

Applying $\colim_n \ch{C}{\Gamma_{\cV(\bmx_t)} -}$ to the morphism of exact triangles \cref{Kos_obj_Gamma:mor_v}, \cref{Gamma_hocolim:HH_Gamma_V} follows directly from \cref{Gamma_hocolim:HH_Gamma_Kos} as filtered colimits of exact sequences of graded $R$-modules are exact (AB5).

\begin{step}
There is an isomorphism $\hocolim_n V_{t,n} \to \rL_{\cV(\bmx_t)} M$ making the right square in \cref{Kos_obj_Gamma_can_mor} commute. 
\end{step}

Consider the following commutative diagram
\begin{equation*}
\begin{tikzcd}[column sep=small]
M \ar[r] \ar[d] & V_{t,\ell} \ar[r] \ar[d] & \hocolim_n V_{t,n} \ar[d] \\
\rL_{\cV(\bmx_t)} M \ar[r]& \rL_{\cV(\bmx_t)} V_{t,\ell} \ar[r] & \rL_{\cV(\bmx_t)} \hocolim V_{t,n} \nospacepunct{.} 
\end{tikzcd}
\end{equation*}
We will show that, with the exception of those morphisms originating from $M$ or $V_{t,\ell}$, all morphisms are isomorphisms. As $\cat{T}$ is compactly generated and $\Gamma_{\cV(\bmx_t)}$ commutes with homotopy colimits, \cref{Gamma_hocolim:HH_Gamma_V} implies $\Gamma_{\cV(\bmx_t)} \hocolim_n V_{t,n} = 0$. Hence the morphism
\begin{equation*}
\hocolim_n V_{t,n} \to \rL_{\cV(\bmx_t)} \hocolim_n V_{t,n}
\end{equation*}
is an isomorphism. 

Using \cite[Theorem~5.6]{Benson/Iyengar/Krause:2008} and \cite[Lemma~2.6]{Benson/Iyengar/Krause:2011b}, we determine the support
\begin{equation*}
\begin{aligned}
\supp_R(\rL_{\cV(\bmx_t)} & \kosobj{M}{\bmx_t^\ell}) = \supp_R(M) \cap \cV(\bmx_t^\ell) \cap (\Spec(R) \setminus \cV(\bmx_t)) = \emptyset
\end{aligned}
\end{equation*}
and deduce $\rL_{\cV(\bmx_t)} \kosobj{M}{\bmx_t^\ell} = 0$ by \cite[Corollary 5.7(1)]{Benson/Iyengar/Krause:2008}. Thus, $\rL_{\cV(\bmx_t)} M \to \rL_{\cV(\bmx_t)} V_{t,\ell}$ is an isomorphism. As isomorphisms are preserved under homotopy colimits and $\rL_{\cV(\bmx_t)}$ commutes with homotopy colimits by \cite[Corollary 6.5]{Benson/Iyengar/Krause:2008}, it follows that
\begin{equation*}
\rL_{\cV(\bmx_t)} M \to \rL_{\cV(\bmx_t)} \hocolim_n V_{t,n}
\end{equation*}
is an isomorphism. The composition of 
$\rL_{\cV(\bmx_t)} M \to \rL_{\cV(\bmx_t)} \hocolim_n V_{t,n}$ followed by the inverse of $\hocolim_n V_{t,n} \to \rL_{\cV(\bmx_t)} \hocolim_n V_{t,n}$ is an isomorphism making the right square in \cref{Kos_obj_Gamma_can_mor} commute as desired.

\begin{step}
There is an isomorphism $\hocolim_n \susp^{-t} \kosobj{M}{\bmx_t^n} \to \Gamma_{\cV(\bmx_t)} M$ making the left square in \cref{Kos_obj_Gamma_can_mor} commute.
\end{step}

By the compatibilities in \cref{Kos_obj_Gamma:mor_v}, there is an induced morphism
\begin{equation*}
\hocolim_n \susp^{-t} \kosobj{M}{\bmx_t^n} \to M\,,
\end{equation*}
and we obtain a commutative diagram 
\begin{equation} \label{Gamma_hocolim:iso_hocolim_Gamma}
\begin{tikzcd}
\susp^{-t} \kosobj{M}{(\bmx_t^\ell)} \ar[d] \ar[dr] \\
\hocolim_n \susp^{-t} \kosobj{M}{(\bmx_t^n)} \ar[r] & M \\
\Gamma_{\cV(\bmx_t)} \hocolim_n \susp^{-t} \kosobj{M}{(\bmx_t^n)} \ar[r] \ar[u] & \Gamma_{\cV(\bmx_t)} M \ar[u] \nospacepunct{.}
\end{tikzcd}
\end{equation}
The morphism
\begin{equation*}
\Gamma_{\cV(\bmx_t)} \hocolim_n \susp^{-t} \kosobj{M}{(\bmx_t^n)} \to \Gamma_{\cV(\bmx_t)} M
\end{equation*}
is an isomorphism by \cref{Gamma_hocolim:HH_Gamma_Kos} and the assumption that $\cat{T}$ is compactly generated.

As $\kosobj{M}{\bmx_t} \in \cat{T}_{\cV(\bmx_t)}$ by \cite[Proposition~2.11]{Benson/Iyengar/Krause:2011b} and $\cat{T}_{\cV(\bmx_t)}$ is closed under homotopy colimits, the morphism
\begin{equation*}
\Gamma_{\cV(\bmx_t)} \hocolim_n \susp^{-t} \kosobj{M}{\bmx_t^n} \to \hocolim_n \susp^{-t} \kosobj{M}{\bmx_t^n}
\end{equation*}
is an isomorphism as well by \cite[Corollary 5.7(1)]{Benson/Iyengar/Krause:2008}. Thus the zig-zag from $\hocolim_n \susp^{-t} \kosobj{M}{\bmx_t^n}$ to $\Gamma_{\cV(\bmx_t)} M$ is an isomorphism making the left square in \cref{Kos_obj_Gamma_can_mor} commute as desired. 
\end{proof}

\begin{remark}\label{compatibility_hocolim_gamma}
In the proof of \cref{Gamma_hocolim} we implicitly show that the induced morphism $\varphi\colon \hocolim_n \susp^{-t} \kosobj{M}{(x_1^n, \ldots, x_t^n)} \to M$, is unique up to isomorphism. That is, we show that for any $\phi$ there exists an isomorphism 
\begin{equation*}
\hocolim_n \susp^{-t} \kosobj{M}{(x_1^n, \ldots, x_t^n)} \xrightarrow{\cong} \Gamma_{\cV(x_1, \ldots, x_t)} M
\end{equation*}
making \cref{Gamma_hocolim:iso_hocolim_Gamma} together with the chosen $\varphi$ commute.
\end{remark}

\begin{remark}
For the homotopy colimit in \cref{Gamma_hocolim}, it is possible to replace the sequence $\{(n,\ldots,n)\}_{n \in \BN}$ by any other sequence in $\BN^t$ that is component-wise increasing and where each component diverges to infinity.
\end{remark}

\section{Regular elements}
Let $A$ be a commutative ring and $M$ an $A$-module. An element $x \in A$ is \emph{$M$-regular} if $x$ is a non-zero divisor on $M$. It is well-known that the following are equivalent
\begin{enumerate}
\item $x$ is $M$-regular; 
\item\label{firstKoszulHomology_zero} $\ch[1]{}{\Kos^A(x;M)} = 0$; 
\item\label{localCohomology_zero} $\lch[0]{(x)}{M} = 0$; 
\end{enumerate}
see for example \cite[Exercise~1.3.9]{Brodmann/Sharp:2013}. We can characterize conditions \eqref{firstKoszulHomology_zero} and \eqref{localCohomology_zero} in terms of the triangulated structure on the derived category of $A$-modules. These characterizations motivate our definition of a regular element in any $R$-linear triangulated category.

\subsection{Ghost maps and vanishing of (co)homology} \label{ghost_map_vanishing_hh}

Let $\cat{T}$ be a triangulated category and $C \in \cat T$. A morphism $f \colon M \to N$ in $\cat{T}$ is called \emph{$C$-ghost} if the induced map
\begin{equation*}
\Hom[*]{\cat{T}}{C}{f} \colon \Hom[*]{\cat T}{C}{M}\to \Hom[*]{\cat T}{C}{N}
\end{equation*}
is zero. This means that pre-composition of $f$ with any morphism $\susp^n C \to M$ is zero for every integer $n$. Further, we say $f \colon M \to N$ is \emph{$t$-fold $C$-ghost}, if it is a $t$-fold composition of $C$-ghost morphisms. 

Let $C$ be a compact object in $\cat{T}$. Recall, one can think of $\ch[*]{C}{-} \colonequals \Hom[*]{\cat{T}}{C}{-}$ as a homology theory. Saying that $f \colon M \to N$ is $C$-ghost means that $f$ vanishes in this homology theory. The homology theory does not detect whether $f$ is a composition of multiple $C$-ghost morphisms. 

\subsection{Over a commutative ring} \label{regular_elt_comm_ring}

Let $A$ be a commutative ring and let $\dcat{A}$ be the derived category of $A$-modules. Then, $\dcat{A}$ is an $A$-linear category, where we view $A$ as graded ring concentrated in degree $0$. Let $x \in A$ and $M$ an $A$-module, the latter is an object in $\dcat{A}$ when viewed as a complex concentrated in degree $0$. Then, $\kosobj{M}{x}$ coincides with the Koszul complex of $x$ with coefficients on $M$, denoted by $\Kos^A(x;M)$. For $C = A$, the functor $\ch{A}{-}$ is the classical homology functor of $A$-complexes. 

Ghost morphisms can be used to detect vanishing of Koszul homology in certain degrees. 

\begin{lemma} \label{Vanishing_Kos_hom}
Let $A$ be a commutative ring and $\bmx = x_1, \ldots, x_t$ a sequence of elements in $A$. For any $A$-module $M$ and $1 \leq s \leq t$, the following are equivalent
\begin{enumerate}
\item \label{Vanishing_Kos_hom:hom} $\ch[i]{}{\Kos^A(\bmx;M)} = 0$ for $i \geq s$; 
\item \label{Vanishing_Kos_hom:ghost} The truncation morphism $\Kos^A(\bmx; M) \to \Kos^A(\bmx; M)_{\geqslant s}$ is $A$-ghost in $\dcat{A}$; and 
\item The truncation morphism $\Kos^A(\bmx; M)_{\geqslant i-1} \to \Kos^A(\bmx; M)_{\geqslant i}$ is $A$-ghost in $\dcat{A}$ for all $s \leq i \leq t$. 
\end{enumerate}
If the above conditions hold, then the map $\Kos^A(\bmx; M) \to \susp^t M$ is $(t-s+1)$-fold $A$-ghost.
\end{lemma}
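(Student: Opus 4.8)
The plan is to first pin down that $\Kos^A(\bmx;M)_{\geqslant s}$ denotes the \emph{stupid} (brutal) truncation, so that the truncation maps appearing in $(2)$ and $(3)$ are the canonical quotient chain maps that delete the terms of homological degree $<s$ (respectively $<i$); this reading is forced by the directions of the arrows, since the good truncation is a subcomplex and would give maps in the opposite direction. I would then abbreviate $K \colonequals \Kos^A(\bmx;M)$, a complex concentrated in homological degrees $0,\dots,t$ whose top term is $K_t\cong M$. Since $\Hom[*]{\dcat{A}}{A}{-}$ recovers the homology of a complex, a morphism of $\dcat{A}$ is $A$-ghost if and only if it induces the zero map on $H_i$ for every $i$, where $H_i(K)=\ch[i]{}{K}$. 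Thus every part of the lemma reduces to one computation: the maps induced on homology by these quotient maps.

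The key step is to compute, for the quotient $q\colon K_{\geqslant i-1}\to K_{\geqslant i}$ (and verbatim for $K\to K_{\geqslant s}$ by taking $i=s$), the induced map $H_n(q)$ degree by degree. For $n>i$ both complexes agree with $K$ near degree $n$, so $H_n(q)$ is the identity of $H_n(K)$; for $n<i$ the target has vanishing homology; and in the boundary degree $n=i$ the map is the canonical map $H_i(K)=\ker d_i/\operatorname{im} d_{i+1}\to K_i/\operatorname{im} d_{i+1}$ induced by the inclusion $\ker d_i\hookrightarrow K_i$, which is \emph{injective}. The crux is this injectivity: it makes $H_i(q)=0$ equivalent to $H_i(K)=0$, rather than letting the map vanish for an incidental reason. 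Hence $q$ is $A$-ghost if and only if $\ch[n]{}{K}=0$ for all $n\geqslant i$. Applying this with $i=s$ gives $(1)\Leftrightarrow(2)$; applying it for each $s\leqslant i\leqslant t$ shows that $(3)$ is the conjunction of the conditions ``$\ch[n]{}{K}=0$ for $n\geqslant i$'' over $s\leqslant i\leqslant t$, whose strongest instance $i=s$ is exactly $(1)$, giving $(3)\Leftrightarrow(1)$ as well.

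Finally, for the $(t-s+1)$-fold conclusion I would factor the canonical morphism $K\to\susp^t M$, namely the projection onto the top term $K_t\cong M=K_{\geqslant t}$, as the composite of quotient maps
\[ K \longrightarrow K_{\geqslant s} \longrightarrow K_{\geqslant s+1} \longrightarrow \cdots \longrightarrow K_{\geqslant t} \cong \susp^t M , \]
which has exactly $t-s+1$ factors. Under the equivalent hypotheses we have $\ch[n]{}{K}=0$ for all $n\geqslant s$, so the first factor $K\to K_{\geqslant s}$ is $A$-ghost by $(2)$, and each later factor $K_{\geqslant i-1}\to K_{\geqslant i}$ with $i\geqslant s+1$ is $A$-ghost by the computation above, since $n\geqslant i$ implies $n\geqslant s$. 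Therefore $K\to\susp^t M$ is a composition of $t-s+1$ ghost maps, that is, $(t-s+1)$-fold $A$-ghost. I expect the only genuinely delicate points to be fixing the truncation convention together with the arrow directions and verifying the injectivity in the boundary degree; the collapse of $(3)$ to its $i=s$ instance and the count of the factors are then routine bookkeeping.
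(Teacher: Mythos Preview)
Your proposal is correct and follows essentially the same approach as the paper's proof: both reduce to computing the effect of the brutal truncation maps on homology, observing that the induced map is an isomorphism above the truncation degree, injective at the boundary degree, and zero below. The paper phrases this for a general truncation $K_{\geqslant i}\to K_{\geqslant j}$ with $i\leq j$ rather than just consecutive steps, but the key observation (injectivity at the boundary degree forces vanishing of the source) and the final identification $K_{\geqslant t}=\susp^t M$ are identical to yours.
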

\begin{proof}
A truncation map $\tau \colon \Kos^A(\bmx;M)_{\geqslant i} \to \Kos^A(\bmx;M)_{\geqslant j}$ for $i \leq j$ induces maps on homology
\begin{equation*}
\ch[\ell]{}{\tau} \colon \ch[\ell]{}{\Kos^A(\bmx;M)_{\geqslant i}} \to \ch[\ell]{}{\Kos^A(\bmx;M)_{\geqslant j}}\,.
\end{equation*}
This map is an isomorphism when $\ell > j$, and it is injective when $\ell = j$. For the remaining indices, $\ell < j$, the map is zero.

Hence, the truncation map $\tau$ is $A$-ghost if and only if $\ch[\ell]{}{\tau} = 0$ for all $\ell \in \BZ$ if and only if $\ch[\ell]{}{\Kos^A(\bmx; M)_{\geqslant j}} = 0$ for $\ell \geq j$. This yields the claim.

The last assertion holds as $\Kos^A(\bmx; M)_{\geqslant t} = \susp^t M$. 
\end{proof}

If $A$ is noetherian and $\fa \subseteq A$ is an ideal, then we have an associated local cohomology functor whose vanishing in degree zero is related to the existence of ghost morphisms as well. The \emph{$\fa$-torsion functor} is defined as
\begin{equation*}
\sF_\fa(M) \colonequals \Set{m \in M | \text{there exists } n \geq 1 \text{ such that } \fa^n m = 0}\,.
\end{equation*}
Usually, this functor is denoted by $\Gamma_\fa$, we choose the notation $\sF_\fa$ to distinguish it from the local cohomology functor for triangulated categories. Having said that, by \cite[Section~9]{Benson/Iyengar/Krause:2008}, the right derived functor of $\sF_\fa$ coincides with the local cohomology functor; that is $R \sF_\fa \cong \Gamma_{\cV(\fa)}$. 

\begin{lemma} \label{Vanishing_local_coh}
Let $A$ be a commutative noetherian ring and $\fa \subseteq A$ an ideal. For any $A$-module $M$ the following are equivalent
\begin{enumerate}
\item \label{Vanishing_local_coh:coh} $\ch[0]{}{R \sF_{\fa} M} = 0$; and
\item \label{Vanishing_local_coh:ghost} $R \sF_{\fa} M \to M$ is $A$-ghost in $\dcat{A}$. 
\end{enumerate}
\end{lemma}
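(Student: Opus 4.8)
The plan is to unwind the $A$-ghost condition into a statement about homology and then observe that both conditions reduce to the vanishing of the $\fa$-torsion submodule $\sF_\fa(M)$. Since $A$ is compact in $\dcat{A}$ and $\ch[*]{A}{-}$ is the ordinary homology functor $H_*(-)$, the morphism $R\sF_\fa M \to M$ is $A$-ghost precisely when the induced map
\begin{equation*}
\ch[*]{A}{R\sF_\fa M} \to \ch[*]{A}{M}
\end{equation*}
is zero. As $M$ is a module, viewed as a complex concentrated in degree $0$, we have $\ch[i]{A}{M} = 0$ for $i \neq 0$ and $\ch[0]{A}{M} = M$. Hence the induced map is automatically zero in every degree $i \neq 0$, and the only degree carrying information is $i = 0$.

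Next I would identify the degree-zero part. Computing $R\sF_\fa M$ from an injective resolution of $M$ shows it is concentrated in nonpositive homological degrees, with $\ch[0]{A}{R\sF_\fa M} \cong \sF_\fa(M)$; this is just the statement that $\sF_\fa$ is left exact, so $R^0\sF_\fa = \sF_\fa$. Moreover, the natural transformation $R\sF_\fa \to \id$ is obtained by deriving the inclusion $\sF_\fa \hookrightarrow \id$ of subfunctors of the identity, so the map it induces in degree $0$ is exactly the canonical inclusion $\sF_\fa(M) \hookrightarrow M$.

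Combining these observations, the morphism $R\sF_\fa M \to M$ is $A$-ghost if and only if this inclusion $\sF_\fa(M) \hookrightarrow M$ is the zero map, which holds if and only if $\sF_\fa(M) = 0$. Since $\ch[0]{}{R\sF_\fa M} = \sF_\fa(M)$, this is precisely condition \eqref{Vanishing_local_coh:coh}, yielding its equivalence with condition \eqref{Vanishing_local_coh:ghost}.

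I expect no serious obstacle here; the argument is lighter than the Koszul analogue in \cref{Vanishing_Kos_hom} precisely because $M$ is concentrated in a single degree, so ghostness is automatic in all nonzero degrees and the entire content sits in degree $0$. The only points that use the standing hypotheses are the identification $R^0\sF_\fa = \sF_\fa$ over a noetherian ring and the naturality of the comparison map $R\sF_\fa \to \id$, both of which are standard for the $\fa$-torsion functor.
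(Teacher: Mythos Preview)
Your proof is correct and follows essentially the same approach as the paper: both reduce the ghost condition to degree $0$ (since $M$ is concentrated there) and then use that the induced map $\ch[0]{}{R\sF_\fa M}\to M$ is the inclusion $\sF_\fa(M)\hookrightarrow M$. The only cosmetic difference is that you invoke the identifications $R^0\sF_\fa=\sF_\fa$ and ``$R\sF_\fa\to\id$ derives the inclusion'' directly, whereas the paper spells this out at the level of an explicit injective resolution $M\xrightarrow{\sim}I$ and the componentwise injection $\sF_\fa I^0\hookrightarrow I^0$.
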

\begin{proof}
The implication \cref{Vanishing_local_coh:coh} $\implies$ \cref{Vanishing_local_coh:ghost} holds since the homology of $M$ is concentrated in degree zero, whereas the zeroth homology of $R \sF_\fa M$ is zero by assumption. 

We assume \cref{Vanishing_local_coh:ghost} holds. Let $I$ be an injective resolution of $M$; that is $M \xrightarrow{\sim} I$. Let $f \in \ch[0]{}{R \sF_\fa M}$; this element corresponds to a chain map $f \colon A \to \sF_\fa I$. As $R \sF_\fa M \to M$ is $A$-ghost, so is $\sF_\fa I \to I$. Hence, the composition $
A \to \sF_\fa I \to I$ is zero in $\dcat{A}$. For degree reasons, the map $A \xrightarrow{f_0} \sF_\fa I^0 \to I^0$ is zero. As $\sF_\fa I^0 \to I^0$ is injective, the map $A \xrightarrow{f_0} \sF_\fa I^0$ is zero. Hence, $f = 0$ and thus $\ch[0]{}{R \sF_\fa M} = 0$. 
\end{proof}

In both, \cref{Vanishing_Kos_hom,Vanishing_local_coh}, the first condition is not invariant under suspension while the second condition is invariant under suspension. The conditions need not be equivalent when $M$ is a complex. 

\subsection{In triangulated categories} \label{regular_triangulated}

We return to the setting of an $R$-linear triangulated category $\cat{T}$. We fix an object $C\in \cat T$, which provides a cohomology theory. 

\begin{proposition} \label{regular_nzdivisor}
Let $R$ be a graded-commutative ring and $\cat{T}$ be an $R$-linear triangulated category. For $C,M \in \cat{T}$ and $x \in R$, the following are equivalent
\begin{enumerate}[start=0]
\item\label{regular_nzdivisor:Hom} $x$ is a non-zero divisor on $\ch[*]{C}{M}$; 
\item\label{regular_nzdivisor:Kosxghost} The map $\susp^{-1} \kosobj{M}{x} \to M$ is $C$-ghost;
\item\label{regular_nzdivisor:Kosxnghost} The map $\susp^{-1} \kosobj{M}{x^n} \to M$ is $C$-ghost for some $n \geq 1$; and
\item\label{regular_nzdivisor:allKosxnghost} The map $\susp^{-1} \kosobj{M}{x^n} \to M$ is $C$-ghost for all $n \geq 1$.
\end{enumerate}
If the ring $R$ is noetherian, the triangulated category $\cat{T}$ is compactly generated, and $C$ is compact, then the above conditions are equivalent to
\begin{enumerate}[resume]
\item\label{regular_nzdivisor:Gammaghost} The map $\Gamma_{\cV(x)} M \to M$ is $C$-ghost.
\end{enumerate}
\end{proposition}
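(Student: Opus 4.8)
The plan is to first establish the equivalence of \cref{regular_nzdivisor:Hom} and \cref{regular_nzdivisor:Kosxghost} directly, and then bootstrap the remaining equivalences from it. Rotating the defining exact triangle \cref{kosobj_triangle} of $\kosobj{M}{x}$ to
\[
\susp^{-1}\kosobj{M}{x} \to M \xrightarrow{x} \susp^{|x|}M \to \kosobj{M}{x}
\]
and applying the homological functor $\ch{C}{-}$ produces a long exact sequence of graded $R$-modules in which the map $\ch{C}{M}\to \ch{C}{\susp^{|x|}M}$ induced by $x$ is, after the standard identification, multiplication by $x$ on $\ch{C}{M}$; here one uses that $x$ is natural. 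By exactness, the image of the map $\ch{C}{\susp^{-1}\kosobj{M}{x}}\to \ch{C}{M}$ induced by the canonical morphism equals the kernel of multiplication by $x$. Since a map of graded modules vanishes precisely when its image is zero, the morphism $\susp^{-1}\kosobj{M}{x}\to M$ is $C$-ghost if and only if multiplication by $x$ on $\ch{C}{M}$ is injective, that is, if and only if $x$ is a non-zero divisor. This yields \cref{regular_nzdivisor:Hom} $\Leftrightarrow$ \cref{regular_nzdivisor:Kosxghost}.

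The identical argument with $x^n$ in place of $x$ shows that $x^n$ is a non-zero divisor on $\ch{C}{M}$ if and only if $\susp^{-1}\kosobj{M}{x^n}\to M$ is $C$-ghost. Hence \cref{regular_nzdivisor:Kosxghost}, \cref{regular_nzdivisor:Kosxnghost} and \cref{regular_nzdivisor:allKosxnghost} assert respectively that $x$ is a non-zero divisor, that $x^n$ is a non-zero divisor for some $n\geq 1$, and that $x^n$ is a non-zero divisor for all $n\geq 1$. It then suffices to record the elementary observation that $x$ is a non-zero divisor on a graded module if and only if some (equivalently, every) power $x^n$ is: injectivity of multiplication by $x$ passes to its powers, and conversely any element killed by $x$ is killed by $x^n$. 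This establishes the equivalence of \cref{regular_nzdivisor:Hom}, \cref{regular_nzdivisor:Kosxghost}, \cref{regular_nzdivisor:Kosxnghost} and \cref{regular_nzdivisor:allKosxnghost}.

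For the equivalence with \cref{regular_nzdivisor:Gammaghost}, I would invoke the isomorphism $\Gamma_{\cV(x)}M \cong \hocolim_n \susp^{-1}\kosobj{M}{x^n}$ from \cref{Kos_obj_Gamma_one} --- equivalently the case $t=1$ of \cref{Gamma_hocolim} --- which is compatible with the canonical morphisms down to $M$. Since $C$ is compact, applying $\ch{C}{-}$ and using \cref{hocolim_colim_compact} identifies $\ch{C}{\Gamma_{\cV(x)}M}$ with $\colim_n \ch{C}{\susp^{-1}\kosobj{M}{x^n}}$, and identifies the map induced by $\Gamma_{\cV(x)}M\to M$ with the colimit of the maps $f_n\colon \ch{C}{\susp^{-1}\kosobj{M}{x^n}}\to \ch{C}{M}$ induced by the canonical morphisms. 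A map out of a filtered colimit is zero exactly when its precomposition with every structure map is zero, so this colimit map vanishes if and only if every $f_n$ vanishes, that is, if and only if every $\susp^{-1}\kosobj{M}{x^n}\to M$ is $C$-ghost. This is precisely \cref{regular_nzdivisor:allKosxnghost}, giving \cref{regular_nzdivisor:allKosxnghost} $\Leftrightarrow$ \cref{regular_nzdivisor:Gammaghost}.

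The main obstacle --- and the only place where the additional hypotheses (noetherian $R$, compactly generated $\cat{T}$, compact $C$) are needed --- is this last step. One must use both that the comparison isomorphism $\hocolim_n \susp^{-1}\kosobj{M}{x^n}\xrightarrow{\cong}\Gamma_{\cV(x)}M$ intertwines the two maps to $M$, so that the respective ghost conditions genuinely correspond, and that compactness of $C$ converts the homotopy colimit into an honest filtered colimit on $\ch{C}{-}$, which is what makes vanishing of the map out of the colimit detectable termwise. By contrast, the equivalences among \cref{regular_nzdivisor:Hom}--\cref{regular_nzdivisor:allKosxnghost} are a formal consequence of the long exact sequence and require no finiteness assumptions.
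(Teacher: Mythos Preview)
Your proof is correct and follows essentially the same approach as the paper's own proof: the long exact sequence for the defining triangle of $\kosobj{M}{x}$ gives the equivalence of \cref{regular_nzdivisor:Hom} and \cref{regular_nzdivisor:Kosxghost}, the elementary fact about powers of non-zero divisors handles \cref{regular_nzdivisor:Kosxnghost} and \cref{regular_nzdivisor:allKosxnghost}, and the equivalence with \cref{regular_nzdivisor:Gammaghost} is obtained from the compatibility diagram in \cref{Gamma_hocolim} together with \cref{hocolim_colim_compact}. The paper presents exactly these steps, so there is nothing to add.
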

\begin{proof}
The exact triangle defining $\kosobj{M}{x}$ induces a long exact sequence of $R$-modules
\begin{equation*}
\cdots \to \ch{C}{\susp^{-1} \kosobj{M}{x}} \to \ch{C}{M} \xrightarrow{x} \ch{C}{\susp^{|x|} M} \to \ch{C}{\kosobj{M}{x}} \to \cdots\,.
\end{equation*}
Then \cref{regular_nzdivisor:Hom} holds if and only if the map in the middle is injective. This holds if and only if the first map is zero. The latter is precisely \cref{regular_nzdivisor:Kosxghost}. 

The equivalences \cref{regular_nzdivisor:Hom} $\iff$ \cref{regular_nzdivisor:Kosxnghost} $\iff$ \cref{regular_nzdivisor:allKosxnghost} hold as $x$ is a non-zero divisor on $\ch{C}{M}$ if and only if $x^n$ is a non-zero divisor on $\ch{C}{M}$ for all/some $n \geq 1$. 

We now assume that $R$ is noetherian, $\cat{T}$ is compactly generated, and $C$ is compact. We make use of the connection of the Koszul objects and local cohomology described in \cref{Kos_obj_Gamma}. Using \cite[Lemma~3.4.3]{Krause:2022}, there is an isomorphism 
\begin{equation*}
\colim_n \ch{C}{\susp^{-1} \kosobj{M}{x^n}} \xrightarrow{\cong} \ch{C}{\Gamma_{\cV(x)} M}
\end{equation*}
and applying $\ch{C}{-}$ to the left-hand side of \eqref{Kos_obj_Gamma_can_mor} yields a commutative diagram
\begin{equation*}
\begin{tikzcd}
\ch{C}{\susp^{-1} \kosobj{M}{x^\ell}} \ar[r] \ar[dr] & \colim_n \ch{C}{\susp^{-1} \kosobj{M}{x^n}} \ar[r,"\cong"] \ar[d] & \ch{C}{\Gamma_{\cV(x)} M} \nospacepunct{.} \ar[dl] \\
& \ch{C}{M}
\end{tikzcd}
\end{equation*}
Thus \eqref{regular_nzdivisor:allKosxnghost} is equivalent to \eqref{regular_nzdivisor:Gammaghost}.
\end{proof}

We are now ready to make a sensible definition for a regular element on an object in a triangulated category.

\begin{definition} \label{dfb_regular_element}
Let $R$ be a graded-commutative ring and $\cat{T}$ be an $R$-linear triangulated category. We say an element $x \in R$ is \emph{$(C,M)$-regular}, where $C$ and $M$ are objects in $\cat{T}$, if $x$ is a non-zero divisor on $\ch[*]{C}{M}$.
\end{definition}

The notion of a $(A,M)$-regular element in $\dcat{A}$ recovers the classical notion of a $M$-regular element for an $A$-module $M$ since $\ch{A}{M} \cong M$. 

\begin{remark}
If $2\in R$ is not a zero-divisor and $x\in R$ is an element of odd degree, then $x^2=0$. Thus for any objects $C$, $M$ with $\ch[*]{C}{M}\neq 0$, elements of odd degree are not $(C,M)$-regular.
\end{remark}

\begin{remark} \label{regular_via_coghost}
There are characterizations of $(C,M)$-regular elements dual to \cref{regular_nzdivisor:Kosxghost,regular_nzdivisor:Kosxnghost,regular_nzdivisor:allKosxnghost} in \cref{regular_nzdivisor}. For $x \in R$ there is a commutative diagram
\begin{equation}\label{hom_x_in_first_or_second_variable}
\begin{tikzcd}[column sep=large]
\Hom[*]{\cat{T}}{C}{\susp^{-|x|}M}\ar[rr,"{\Hom[*]{\cat{T}}{C}{x(\susp^{-|x|} M)}}"] \ar[d,"\susp^{|x|}" swap] && \Hom[*]{\cat{T}}{C}{M}\\
\Hom[*]{\cat{T}}{\susp^{|x|}C}{M}\ar[rru,"{\Hom[*]{\cat{T}}{x(C)}{M}}"']&& 
\end{tikzcd}
\end{equation}
for any $C,M \in \cat{T}$. Hence $\susp^{-1} \kosobj{M}{x} \to M$ is $C$-ghost if and only if $\susp^{|x|} C \to \kosobj{C}{x}$ is $M$-coghost; see \cref{MM_regular} for more details. 

Alternatively, we observe that ghost and coghost maps are dual notions, meaning $f$ is $C$-ghost in $\cat{T}$ if and only if $\op{f}$ is $C$-coghost in $\op{\cat{T}}$. 
\end{remark}

\subsection{Induced isomorphisms} \label{induced_iso_reg_elt}

Whenever $x$ is $(C,M)$-regular, the long exact sequence induced by the defining exact triangle of $\kosobj{M}{x}$ induces an isomorphism of graded $R$-modules
\begin{equation}\label{regular_Hom_Kos_obj_one_element}
\ch{C}{ M}/x \ch{C}{M} \xrightarrow{\cong} \ch{C}{\susp^{-|x|}\kosobj{M}{x}}
\end{equation}
sending $[C \to \susp^k M]$ to the composite $C\to \susp^k M\to \susp^{-|x|+k} \kosobj{M}{x}$.

Similarly, by \cref{regular_via_coghost}, we also obtain an isomorphism of graded $R$-modules
\begin{equation}\label{regular_Hom_Kos_obj_one_element_other_variable}
\Hom[*]{\cat{T}}{\susp^{-1}{\kosobj{C}{x}}}{M} \xleftarrow{\cong}\ch{C}{ M}/ \ch{C}{M}x 
\end{equation}
sending $[C\to \susp^k M]$ to the composite $\susp^{-1}\kosobj{C}{x}\to C \to \susp^k M$.

\section{Regular sequences}\label{sec:regular_sequences}

Over a commutative ring $A$, a sequence $x_1, \ldots, x_t$ is \emph{$M$-regular} for an $A$-module $M$, if
\begin{enumerate}
\item \label{class_reg_seq:ind} $x_i$ is $M/(x_1, \ldots, x_{i-1}) M$-regular for $1 \leq i \leq t$; and
\item \label{class_reg_seq:nonzero} $M/(x_1, \ldots, x_t) M \neq 0$. 
\end{enumerate}
A sequence that satisfies \cref{class_reg_seq:ind} but not necessarily \cref{class_reg_seq:nonzero} is called \emph{weakly $M$-regular}. 
We consider generalizations of these two conditions to triangulated categories separately.

\subsection{Inductive condition}

We first generalize the inductive condition \cref{class_reg_seq:ind}. The characterization of regular elements from \cref{regular_nzdivisor} for triangulated categories suggests several possible choices of objects to use for the inductive definition, namely the domains of the $C$-ghost maps. We show that the definition is independent thereof.

\begin{proposition} \label{reg_seq_equiv}
Let $R$ be a graded-commutative ring and $\cat{T}$ an $R$-linear triangulated category. Let $C,M \in \cat{T}$ and $x \in R$ a $(C,M)$-regular element. For $y \in R$ the following are equivalent
\begin{enumerate}
\item \label{reg_seq_equiv:Kosx} $y$ is $(C,\kosobj{M}{x})$-regular;
\item \label{reg_seq_equiv:Kosxn} $y$ is $(C,\kosobj{M}{x^n})$-regular for some $n \geq 1$;
\item \label{reg_seq_equiv:allKosxn} $y$ is $(C,\kosobj{M}{x^n})$-regular for all $n \geq 1$.
\end{enumerate}
If the ring $R$ is noetherian, the triangulated category $\cat{T}$ is compactly generated and $C$ is compact, then the above conditions are equivalent to
\begin{enumerate}[resume]
\item \label{reg_seq_equiv:Gamma} $y$ is $(C,\Gamma_{\cV(x)} M)$-regular.
\end{enumerate}
\end{proposition}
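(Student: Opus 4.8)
The plan is to reduce everything to the key invariant, namely the graded $R$-module $\ch[*]{C}{\kosobj{M}{x}}$, and to track how it changes when $x$ is replaced by $x^n$. Recall that since $x$ is $(C,M)$-regular, the defining exact triangle of $\kosobj{M}{x}$ yields the short exact sequence $0 \to \ch[*]{C}{M}/x\,\ch[*]{C}{M} \to \ch[*]{C}{\susp^{-|x|}\kosobj{M}{x}} \to (0 :_{\ch[*]{C}{\susp M}} x) \to 0$, and regularity forces the torsion term to vanish; this is exactly the isomorphism \eqref{regular_Hom_Kos_obj_one_element}. Thus $\ch[*]{C}{\kosobj{M}{x}} \cong \ch[*]{C}{M}/x\,\ch[*]{C}{M}$ up to a degree shift. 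The statement that $y$ is $(C,\kosobj{M}{x})$-regular is, by \cref{dfb_regular_element}, the statement that $y$ is a non-zero divisor on this module. So \cref{reg_seq_equiv:Kosx}, \cref{reg_seq_equiv:Kosxn}, \cref{reg_seq_equiv:allKosxn} become purely module-theoretic assertions about $y$ being a non-zero divisor on $\ch[*]{C}{M}/x^n\,\ch[*]{C}{M}$ for one, respectively all, $n$.

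First I would establish the equivalence of \cref{reg_seq_equiv:Kosx}, \cref{reg_seq_equiv:Kosxn}, \cref{reg_seq_equiv:allKosxn}. Writing $N = \ch[*]{C}{M}$, which is an $R$-module on which $x$ is a non-zero divisor, the content is the elementary commutative-algebra fact that $y$ is a non-zero divisor on $N/xN$ if and only if it is a non-zero divisor on $N/x^n N$ for one (equivalently all) $n \geq 1$. This is the classical statement that a weakly regular sequence stays weakly regular when the first element is replaced by a power; the standard proof filters $N/x^n N$ by the submodules $x^i N / x^n N$ for $0 \le i \le n$, whose successive quotients are each isomorphic to $N/xN$ (using that $x$ is $N$-regular, so multiplication by $x^i$ induces $N/xN \cong x^i N/x^{i+1}N$). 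A non-zero divisor on each associated graded piece is a non-zero divisor on the filtered module, giving one direction; the converse follows since $N/xN$ is a quotient of $N/x^n N$ with kernel $xN/x^n N$ on which, by the same filtration, $y$ is also a non-zero divisor. I would phrase this lemma at the level of $R$-modules and then transport it through \eqref{regular_Hom_Kos_obj_one_element} applied to both $x$ and $x^n$ (both are $(C,M)$-regular, the latter by \cref{regular_nzdivisor} equivalence \cref{regular_nzdivisor:allKosxnghost}).

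For the final equivalence with \cref{reg_seq_equiv:Gamma}, under the noetherian/compactly-generated/compact hypotheses, I would use \cref{Gamma_hocolim} (the $t=1$ case, i.e.\ \eqref{Kos_obj_Gamma_one}) together with the isomorphism $\colim_n \ch[*]{C}{\susp^{-1}\kosobj{M}{x^n}} \cong \ch[*]{C}{\Gamma_{\cV(x)}M}$ from \cref{hocolim_colim_compact}, which is exactly the computation already carried out in the proof of \cref{regular_nzdivisor}. The point is that $\ch[*]{C}{\Gamma_{\cV(x)}M}$ is the filtered colimit of the modules $\ch[*]{C}{\susp^{-1}\kosobj{M}{x^n}} \cong N/x^n N$ along the transition maps induced by multiplication by $x$, and this colimit is the $x$-torsion-free quotient, or more precisely one checks $y$ is a non-zero divisor on the colimit iff it is a non-zero divisor on each (equivalently some) term. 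Since filtered colimits of injective maps of $R$-modules are injective (AB5), a uniform non-zero divisor property passes to the colimit; conversely a zero divisor relation in the colimit is detected at some finite stage. This hands us the equivalence of \cref{reg_seq_equiv:allKosxn} and \cref{reg_seq_equiv:Gamma}.

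The main obstacle I anticipate is the bookkeeping in the module-theoretic lemma, specifically verifying that the transition maps $N/x^n N \to N/x^{n+1}N$ appearing in the colimit (induced by multiplication by $x$ on the Koszul objects) are precisely those for which the colimit computes the $x$-torsion of $N$, and ensuring that these maps match the ones induced by the morphisms $\kosobj{M}{x^n} \to \kosobj{M}{x^{n+1}}$ from \eqref{Kos_obj_Gamma_one} under the identification \eqref{regular_Hom_Kos_obj_one_element}. A secondary subtlety is that all the modules are $\BZ$-graded and $y$ may have nonzero degree, so ``non-zero divisor'' must be read in the graded sense throughout; the filtration argument is insensitive to grading, but I would state it carefully to avoid sign or degree-shift errors coming from the suspensions $\susp^{-|x|}$ in \eqref{regular_Hom_Kos_obj_one_element}.
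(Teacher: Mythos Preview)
Your proposal is correct, and for the equivalence of \cref{reg_seq_equiv:Kosx}, \cref{reg_seq_equiv:Kosxn}, \cref{reg_seq_equiv:allKosxn} it matches the paper's argument exactly: both reduce via \eqref{regular_Hom_Kos_obj_one_element} to the classical fact that $y$ is a non-zero divisor on $N/xN$ if and only if on $N/x^nN$, for $N=\ch[*]{C}{M}$.

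For \cref{reg_seq_equiv:allKosxn} $\iff$ \cref{reg_seq_equiv:Gamma} your route diverges from the paper's. You stay entirely at the level of graded $R$-modules: identify $\ch[*]{C}{\Gamma_{\cV(x)}M}$ with the filtered colimit of the $N/x^nN$ along multiplication by $x$, observe these transition maps are injective because $x$ is $N$-regular, and conclude that $y$ is a non-zero divisor on the colimit precisely when it is one on each term. The paper instead translates regularity of $y$ into the $\Gamma_{\cV(y)}$-ghost condition from \cref{regular_nzdivisor}\eqref{regular_nzdivisor:Gammaghost}, invokes $\Gamma_{\cV(y)}\Gamma_{\cV(x)}=\Gamma_{\cV(x,y)}=\Gamma_{\cV(x)}\Gamma_{\cV(y)}$, and compares the resulting maps via the same colimit diagram; the injectivity of the transition maps (which the paper checks via the same square you anticipate) is still the crux for the harder implication. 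Your argument is more elementary and self-contained; the paper's is more structural and foreshadows the iterated use of $\Gamma_{\cV}$ in \cref{intro:weakly_regular}. One small caution: your aside that ``this colimit is the $x$-torsion-free quotient'' is not right---the colimit along multiplication by $x$ is rather a local-cohomology-type module---but you immediately pivot to the correct operative statement, so this does not affect the argument.
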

\begin{proof}
Since $x$ is $(C,M)$-regular, $y$ is a non-zero divisor on $\ch{C}{\susp^{|x|} M}/x\ch{C}{M}$ if and only if $y$ is a non-zero divisor on $\ch{C}{\susp^{n|x|} M}/x^n \ch{C}{M}$ for some $n \geq 1$. Thus, we obtain the equivalences between \cref{reg_seq_equiv:Kosx,reg_seq_equiv:Kosxn,reg_seq_equiv:allKosxn} from \cref{regular_nzdivisor,regular_Hom_Kos_obj_one_element}.

We now assume that $R$ is noetherian, $\cat{T}$ is compactly generated and $C$ is compact. We use the characterization of regularity from \cref{regular_nzdivisor}\eqref{regular_nzdivisor:Gammaghost} to establish the equivalence of \cref{reg_seq_equiv:allKosxn} and \cref{reg_seq_equiv:Gamma}, that is, we show that the following are equivalent
\begin{enumprime}[start=3]
\item \label{reg_seq_equiv:allKosxn'} $\Gamma_{\cV(y)} \kosobj{M}{x^n} \to \kosobj{M}{x^n}$ is $C$-ghost for all $n \geq 1$;
\item \label{reg_seq_equiv:Gamma'} $\Gamma_{\cV(y)}\Gamma_{\cV(x)} M \to \Gamma_{\cV(x)} M$ is $C$-ghost.
\end{enumprime}
We will use that $\Gamma_{\cV(x)} \Gamma_{\cV(y)} = \Gamma_{\cV(x,y)} = \Gamma_{\cV(y)} \Gamma_{\cV(x)}$; see \cite[Proposition~6.1]{Benson/Iyengar/Krause:2008}. Since $\Gamma_{\cV(y)}$ is $R$-linear and commutes with homotopy colimits, we obtain a commutative diagram
\begin{equation*}
\begin{tikzcd}
\ch{C}{\Gamma_{\cV(x,y)} M} \ar[d] &[-3em] \cong &[-3em] \colim_n \ch{C}{\susp^{-1} \kosobj{\Gamma_{\cV(y)} M}{x^n}} \ar[d] & \ch{C}{\susp^{-1} \Gamma_{\cV(y)} \kosobj{M}{x^\ell}} \ar[d] \ar[l] \\
\ch{C}{\Gamma_{\cV(x)} M} & \cong & \colim_n \ch{C}{\susp^{-1} \kosobj{M}{x^n}} & \ch{C}{\susp^{-1} \kosobj{M}{x^\ell}} \ar[l]
\end{tikzcd}
\end{equation*}
using \cite[Proposition~2.9]{Benson/Iyengar/Krause:2011b}; see also \cref{Gamma_hocolim}.

The implication \cref{reg_seq_equiv:allKosxn'} $\implies$ \cref{reg_seq_equiv:Gamma'} follows, as the middle vertical map is zero by the universal property of the colimit, and hence the left vertical map is zero.

As the diagram
\begin{equation*}
\begin{tikzcd}
\ch{C}{\kosobj{M}{x^n}} \ar[r] \ar[d,"\cong"] & \ch{C}{\kosobj{M}{x^{n+1}}} \ar[d,"\cong"] \\
\ch{C}{M}/x^n \ch{C}{M} \ar[r,"x"] & \ch{C}{M}/x^{n+1} \ch{C}{M}
\end{tikzcd}
\end{equation*}
commutes and $x$ is a non-zero divisor on $\ch{C}{M}$, the horizontal maps are injective. Hence the map $\ch{C}{\susp^{-1} \kosobj{M}{x^\ell}} \to \colim_n \ch{C}{\susp^{-1} \kosobj{M}{x^n}}$ is injective for any $\ell$, and \cref{reg_seq_equiv:Gamma'} $\implies$ \cref{reg_seq_equiv:allKosxn'} follows.
\end{proof}

\subsection{Non-zero condition} \label{subsec:nonzero}

We generalize the condition $M \neq (x_1, \ldots, x_t) M$ to the setting of a triangulated category. 

In the derived category $\dcat{A}$ of modules over a ring $A$, the canonical morphism $\kosobj{M}{x}=\Kos^A(x;M)\to M/xM$ is an isomorphism in $\dcat{A}$ when $x$ is an $(A,M)$-regular element. Hence, we are interested in the (non-)vanishing of the Koszul object.

For one element, there are numerous equivalent conditions.

\begin{proposition} \label{zero_one}
Let $R$ be a graded-commutative ring and $\cat{T}$ be an $R$-linear triangulated category. For $M \in \cat{T}$ and $x \in R$, the following are equivalent
\begin{enumerate}
\item \label{zero_one:Kos_map} The morphism $\susp^{-1} \kosobj{M}{x} \to M$ is zero;
\primeitem \label[enumprimei]{zero_one:Kos_obj} The object $\kosobj{M}{x}$ is zero;
\item \label{zero_one:Kosn_map} The morphism $\susp^{-1} \kosobj{M}{x^n} \to M$ is zero for some $n \geq 1$; 
\primeitem \label[enumprimei]{zero_one:Kosn_obj} The object $\kosobj{M}{x^n}$ is zero for some $n \geq 1$;
\item \label{zero_one:all_Kos_map} The morphism $\susp^{-1} \kosobj{M}{x^n} \to M$ is zero for all $n \geq 1$; and
\primeitem \label[enumprimei]{zero_one:all_Kos_obj} The object $\kosobj{M}{x^n}$ is zero for all $n \geq 1$.
\end{enumerate}
If the ring $R$ is noetherian and $\cat{T}$ is compactly generated, then the above conditions are equivalent to
\begin{enumerate}[resume]
\item \label{zero_one:Gamma_map} The morphism $\Gamma_{\cV(x)} M \to M$ is zero;
\primeitem \label[enumprimei]{zero_one:Gamma_obj} The object $\Gamma_{\cV(x)} M$ is zero.
\end{enumerate}
\end{proposition}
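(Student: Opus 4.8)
The plan is to reduce everything to the single algebraic observation that $\kosobj{M}{y}$ is the cone of $y(M)\colon M\to\susp^{|y|}M$, so that $\kosobj{M}{y}=0$ if and only if $y(M)$ is an isomorphism, for any homogeneous $y\in R$. I would first treat the six conditions not involving local cohomology, which need no hypotheses on $R$ or $\cat{T}$, and then close a short cycle of implications through the local cohomology conditions using \cref{Gamma_hocolim}. The backward implications ``object zero $\Rightarrow$ map zero'' are all trivial, so the content lies in the forward implications and in comparing different powers of $x$.

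The key step, which I expect to be the main obstacle, is: for a single homogeneous $y\in R$, the morphism $\susp^{-1}\kosobj{M}{y}\to M$ is zero if and only if $\kosobj{M}{y}=0$. For the nontrivial direction, note that $\susp^{-1}\kosobj{M}{y}\to M$ is a desuspension of the connecting morphism in the defining exact triangle \cref{kosobj_triangle}, so its vanishing is equivalent, by the standard fact that the connecting map of an exact triangle vanishes precisely when the first map is a split monomorphism, to $y(M)$ being a split monomorphism. This is the only place where the triangulated and $R$-linear structure does real work: the morphism $y(M)$ is a homogeneous \emph{central} element of the graded ring $\End[*]{\cat{T}}{M}$, and a homogeneous central element admitting a one-sided inverse is automatically invertible (a short argument with graded signs; in the odd-degree case one first deduces $2=0$ and then that the left and right inverses coincide). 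Hence $y(M)$ is an isomorphism and $\kosobj{M}{y}=0$.

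With this lemma in hand the unconditional equivalences are formal. Applying it with $y=x$ gives \cref{zero_one:Kos_map}$\Leftrightarrow$\cref{zero_one:Kos_obj}, and applying it with $y=x^n$ matches each map condition to the corresponding object condition, giving \cref{zero_one:Kosn_map}$\Leftrightarrow$\cref{zero_one:Kosn_obj} and \cref{zero_one:all_Kos_map}$\Leftrightarrow$\cref{zero_one:all_Kos_obj}. To link the various powers I pass to $\End[*]{\cat{T}}{M}$: the image of $x^n$ is the $n$-th power of the image of $x$, and a power of a ring element is a unit exactly when the element is, so the three object conditions $\kosobj{M}{x}=0$, $\kosobj{M}{x^n}=0$ for some $n$, and $\kosobj{M}{x^n}=0$ for all $n$ all coincide. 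Combining these identifications yields the equivalence of all six conditions.

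Finally, assume $R$ is noetherian and $\cat{T}$ compactly generated, and incorporate the local cohomology conditions. The implication \cref{zero_one:Gamma_obj}$\Rightarrow$\cref{zero_one:Gamma_map} is trivial since the source of the morphism is the zero object. For \cref{zero_one:all_Kos_obj}$\Rightarrow$\cref{zero_one:Gamma_obj}, the isomorphism $\susp^{-1}\hocolim_n\kosobj{M}{x^n}\xrightarrow{\cong}\Gamma_{\cV(x)}M$ of \cref{Kos_obj_Gamma_one} identifies $\Gamma_{\cV(x)}M$ with the homotopy colimit of a sequence of zero objects, so $\Gamma_{\cV(x)}M=0$. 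For \cref{zero_one:Gamma_map}$\Rightarrow$\cref{zero_one:all_Kos_map}, I use the compatibility established in \cref{Gamma_hocolim}: each canonical morphism $\susp^{-1}\kosobj{M}{x^\ell}\to M$ factors as $\susp^{-1}\kosobj{M}{x^\ell}\to\hocolim_n\susp^{-1}\kosobj{M}{x^n}\xrightarrow{\cong}\Gamma_{\cV(x)}M\to M$, so if $\Gamma_{\cV(x)}M\to M$ vanishes then so does every $\susp^{-1}\kosobj{M}{x^\ell}\to M$. Since \cref{zero_one:all_Kos_map} and \cref{zero_one:all_Kos_obj} are already equivalent, this closes the cycle and in particular delivers \cref{zero_one:Gamma_map}$\Leftrightarrow$\cref{zero_one:Gamma_obj} without a separate torsion-versus-local argument.
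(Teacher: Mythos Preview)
Your proof is correct and follows essentially the same strategy as the paper: both reduce \cref{zero_one:Kos_map}$\Leftrightarrow$\cref{zero_one:Kos_obj} to showing that a split monomorphism $x(M)$ is an isomorphism via graded-centrality of $x$ in $\End[*]{\cat{T}}{M}$, and both handle the local cohomology conditions by the same cycle through \cref{Gamma_hocolim}. The only real difference is in linking the different powers of $x$: the paper argues \cref{zero_one:Kosn_map}$\Rightarrow$\cref{zero_one:Kos_map} by factoring $\susp^{-1}\kosobj{M}{x}\to M$ through $\susp^{-1}\kosobj{M}{x^n}\to M$ via \cref{kos_product_diagram}, and \cref{zero_one:Kos_obj}$\Rightarrow$\cref{zero_one:all_Kos_obj} by induction on the exact triangle from \cref{kos_product}, whereas you pass directly to $\End[*]{\cat{T}}{M}$ and use that $x(M)$ is a unit if and only if $x(M)^n$ is. Your route is a little cleaner here.

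One small remark: the parenthetical about the odd-degree case (``one first deduces $2=0$'') is unnecessary. If $f\cdot x(M)=\id$ then graded-centrality gives $x(M)\cdot f=(-1)^{|x|}\id$ directly, so $x(M)$ already has a two-sided inverse (namely $(-1)^{|x|}f$ on the right); you do not need the left and right inverses to agree, only to exist. The deduction that $2=0$ when $|x|$ is odd is a consequence, not a prerequisite.
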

\begin{proof}
We first show the equivalence \cref{zero_one:Kos_map} $\iff$ \cref{zero_one:Kos_obj}. The backward direction is clear. If \cref{zero_one:Kos_map} holds, then there exists a morphism $f \colon \susp^{|x|} M \to M$ such that $f\circ x(M)=\id_M$. As $x$ is natural, we have
\begin{equation*}
x(M) \circ f = (\susp^{|x|} f) \circ x(\susp^{|x|} M) = (\susp^{|x|} f) \circ (-1)^{|x|^2} \susp^{|x|} x(M) = (-1)^{|x|} \id_{\susp^{|x|} M}\,.
\end{equation*}
Hence $x(M)$ is an isomorphism and $\kosobj{M}{x} = 0$.

The equivalences \cref{zero_one:Kosn_map} $\iff$ \cref{zero_one:Kosn_obj} and \cref{zero_one:all_Kos_map} $\iff$ \cref{zero_one:all_Kos_obj} follow by replacing $x$ with $x^n$. 

The implication \cref{zero_one:Kos_map} $\implies$ \cref{zero_one:Kosn_map} is clear. The converse holds as $\susp^{-1} \kosobj{M}{x} \to M$ factors through $\susp^{-1} \kosobj{M}{x^n} \to M$ by \cref{Kos_obj_Gamma:mor_v}.

The implication \cref{zero_one:all_Kos_obj} $\implies$ \cref{zero_one:Kos_obj} is clear. The converse follows by induction
using the exact triangle 
\begin{equation*}
\kosobj{M}{x} \to \kosobj{M}{x^n} \to \susp^{|x|} \kosobj{M}{x^{n-1}} \to \susp \kosobj{M}{x}
\end{equation*}
from \cref{kos_product}.

We now assume that $R$ is noetherian and $\cat{T}$ is compactly generated. The implication \cref{zero_one:Gamma_map} $\implies$ \cref{zero_one:Kos_map} holds as $\susp^{-1} \kosobj{M}{x} \to M$ factors through $\Gamma_{\cV(x)} M \to M$. Further, \cref{zero_one:all_Kos_obj} $\implies$ \cref{zero_one:Gamma_obj} by \cref{Gamma_hocolim} and \cref{zero_one:Gamma_obj} $\implies$ \cref{zero_one:Gamma_map} is clear. This completes the proof.
\end{proof}

The obstruction to generalize this \namecref{zero_one} to a sequence of elements $x_1, \ldots, x_t$ is that the Koszul object need not be functorial. As this assumption is only needed for the different powers of the sequence, we split the equivalences in multiple results to minimize the assumptions of each. 

\begin{proposition} \label{nonzero_Kos_map_equiv}
Let $R$ be a graded-commutative ring and $\cat{T}$ be an $R$-linear triangulated category.
Let $x_1, \ldots, x_t \in R$ be productive elements such that $x_i^n$ is Koszul-exact for each $1 \leq i \leq t$ and $n \geq 1$. For any $M \in \cat{T}$ the following are equivalent
\begin{enumerate}
\item \label{nonzero_Kos_map_equiv:Kos_map} The morphism $\susp^{-t} \kosobj{M}{(x_1, \ldots, x_t)} \to M$ is zero;
\primeitem \label[enumprimei]{nonzero_Kos_map_equiv:Kos_obj} The object $\kosobj{M}{(x_1, \ldots, x_t)}$ is zero;
\item \label{nonzero_Kos_map_equiv:Kosn_map} The morphism $\susp^{-t} \kosobj{M}{(x_1^n, \ldots, x_t^n)} \to M$ is zero for some $n \geq 1$; 
\primeitem \label[enumprimei]{nonzero_Kos_map_equiv:Kosn_obj} The object $\kosobj{M}{(x_1^n, \ldots, x_t^n)}$ is zero for some $n \geq 1$;
\item \label{nonzero_Kos_map_equiv:all_Kos_map} The morphism $\susp^{-t} \kosobj{M}{(x_1^n, \ldots, x_t^n)} \to M$ is zero for all $n \geq 1$; 
\primeitem \label[enumprimei]{nonzero_Kos_map_equiv:all_Kos_obj} The object $\kosobj{M}{(x_1^n, \ldots, x_t^n)}$ is zero for all $n \geq 1$.
\end{enumerate}
\end{proposition}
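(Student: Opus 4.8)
The plan is to mirror the structure of the proof of \cref{zero_one}, establishing the six conditions through a cycle of implications while carefully tracking where the hypotheses on $x_1, \ldots, x_t$ are actually needed. First I would dispatch the primed-unprimed equivalences \cref{nonzero_Kos_map_equiv:Kos_map} $\iff$ \cref{nonzero_Kos_map_equiv:Kos_obj}, and likewise the $n$-th power versions. The forward direction of \cref{nonzero_Kos_map_equiv:Kos_map} $\implies$ \cref{nonzero_Kos_map_equiv:Kos_obj} is the crux here: unlike the one-element case, the vanishing of the composite $\susp^{-t} \kosobj{M}{\bmx} \to M$ does not immediately force the object to be zero. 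The plan is to induct on $t$, using the defining exact triangle of the outermost Koszul operation together with the hypothesis that $x_t$ is productive, so that $x_t$ acts as zero on $\kosobj{\kosobj{M}{(x_1,\ldots,x_{t-1})}}{x_t}$; combined with the induction hypothesis applied to $\kosobj{M}{(x_1,\ldots,x_{t-1})}$, this should split off enough of the triangle to conclude vanishing of the object. The backward direction is trivial in both cases.

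Next I would handle the power-invariance equivalences. The implications \cref{nonzero_Kos_map_equiv:all_Kos_obj} $\implies$ \cref{nonzero_Kos_map_equiv:Kos_obj} $\implies$ \cref{nonzero_Kos_map_equiv:Kosn_obj} are formal. For the harder direction \cref{nonzero_Kos_map_equiv:Kos_obj} $\implies$ \cref{nonzero_Kos_map_equiv:all_Kos_obj}, I would iterate the product exact triangle \cref{kos_product_triangle} from \cref{kos_product}: each $\kosobj{-}{x_i^n}$ is built from $\kosobj{-}{x_i}$ by repeatedly coning against $\kosobj{-}{x_i}$-shaped pieces, so vanishing of $\kosobj{M}{(x_1,\ldots,x_t)}$ propagates to all powers. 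This is exactly where the Koszul-exactness of $x_i^n$ enters: it guarantees that $\kosobj{-}{x_i^n}$ is an exact functor, so that applying it to a triangle with a zero object again yields a triangle in which that position is zero, allowing the induction to run variable-by-variable and power-by-power without ambiguity in the non-functorial choices of Koszul object.

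To get from the object-level statements to the morphism-level statements I would argue as in \cref{zero_one}. The implication from the vanishing of the object to the vanishing of the composite morphism to $M$ is immediate. For \cref{nonzero_Kos_map_equiv:Kosn_map} $\implies$ \cref{nonzero_Kos_map_equiv:Kos_map}, I expect to use that the composite $\susp^{-t}\kosobj{M}{\bmx} \to M$ factors through $\susp^{-t}\kosobj{M}{\bmx^n} \to M$ via the morphisms $u_{t,n}$ constructed in \cref{Kos_obj_Gamma:mor_u}, analogously to the one-element case.

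The step I expect to be the main obstacle is the forward implication \cref{nonzero_Kos_map_equiv:Kos_map} $\implies$ \cref{nonzero_Kos_map_equiv:Kos_obj}. In the one-element setting one exploits naturality of $x$ to produce a two-sided inverse and conclude that $x$ is an isomorphism; for a sequence no single such identity is available, and the Koszul object is genuinely non-functorial in general, which is precisely why the productivity hypothesis was imposed. The delicate point is to arrange the induction so that the productivity of $x_t$ on the inner Koszul object interacts correctly with the assumed vanishing of the length-$t$ composite, rather than merely the length-$(t-1)$ composite; I anticipate needing to chase the commutativity data in \cref{kos_product_diagram} to relate these two composites and to ensure that the zero morphism hypothesis descends to the inner object before invoking the inductive step.
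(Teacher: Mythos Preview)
Your overall architecture matches the paper's: the cycle of implications, the use of the product triangle \eqref{kos_product_triangle} for $(1')\Rightarrow(3')$, and the factorization argument for $(2)\Rightarrow(1)$ are all as in the paper (the paper cites \eqref{kos_product_diagram} rather than the maps $u_{t,n}$, but either works).

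The gap is precisely where you flag it, the implication $(1)\Rightarrow(1')$, and your proposed mechanism does not close it. Productivity of the \emph{outermost} element $x_t$ together with an induction hypothesis ``applied to $\kosobj{M}{(x_1,\ldots,x_{t-1})}$'' gives you nothing to work with: knowing that the length-$t$ composite $\susp^{-t}\kosobj{M}{\bmx_t}\to M$ vanishes says nothing about the length-$(t-1)$ composite $\susp^{-(t-1)}\kosobj{M}{\bmx_{t-1}}\to M$, so there is no hypothesis for the induction to bite on. Moreover the diagram you propose to chase, \eqref{kos_product_diagram}, concerns $\kosobj{M}{xy}$ for a product of elements; it does not relate $\kosobj{M}{(x,y)}$ to $\kosobj{M}{(y,x)}$ and will not supply the missing link.

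The paper isolates this step as a separate two-element lemma, \cref{zero_inductive_step}: for Koszul-exact $x,y$ with $x$ productive, the composite $\susp^{-2}\kosobj{M}{(x,y)}\to M$ vanishes if and only if the \emph{last} map $\susp^{-2}\kosobj{M}{(x,y)}\to\susp^{-1}\kosobj{M}{x}$ vanishes. The key move is to use Koszul-exactness via \cref{kos_obj_commute}, specifically the compatibility \eqref{Kos_functor_compatible}, to reroute the composite through $\susp^{-1}\kosobj{M}{y}$ instead of $\susp^{-1}\kosobj{M}{x}$; vanishing then forces $\susp^{-1}\kosobj{M}{y}\to M$ to factor through multiplication by $x$, and applying the exact functor $\kosobj{(-)}{x}$ annihilates that factorization because $x$ is productive. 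Iterating this lemma down the tower, together with \cref{zero_one} at the end, gives $(1)\Leftrightarrow(1')$. Note how the two hypotheses play distinct roles here: Koszul-exactness furnishes the commutation isomorphism needed to reroute the composite, while productivity (of the element being peeled off at each stage, not only $x_t$) is what kills the map after applying the Koszul functor. Your plan invokes only the second ingredient and points to the wrong diagram for the first.
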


We first prove an auxiliary lemma for the equivalences ($i$) $\iff$ ($i$'). 

\begin{lemma} \label{zero_inductive_step}
Let $R$ be a graded-commutative ring and $\cat{T}$ be an $R$-linear triangulated category. We assume $x,y \in R$ are Koszul-exact with $x$ productive. For any $M \in \cat{T}$ the following are equivalent
\begin{enumerate}
\item \label{zero_inductive_step:Kos_map} The morphism $\susp^{-2} \kosobj{M}{(x,y)} \to M$ is zero; and
\item \label{zero_inductive_step:Kos_last_map} The morphism $\susp^{-2} \kosobj{M}{(x,y)} \to \susp^{-1} \kosobj{M}{x}$ is zero. 
\end{enumerate}
\end{lemma}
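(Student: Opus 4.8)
The plan is to prove the two implications separately. The implication $(2)\Rightarrow(1)$ is immediate, since the morphism in $(1)$ is the composite of the morphism in $(2)$ with the canonical morphism $\susp^{-1}\kosobj{M}{x}\to M$. So the content is $(1)\Rightarrow(2)$.

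First I would recast both conditions in the reverse order of the sequence, which puts the productive element $x$ on the outside. As $x$ and $y$ are Koszul-exact, \cref{kos_obj_commute} provides an isomorphism $\kosobj{M}{(x,y)}\cong\kosobj{M}{(y,x)}=\kosobj{(\kosobj{M}{y})}{x}$ compatible with the canonical morphisms to $M$. Writing $L\colonequals\kosobj{M}{y}$, condition $(1)$ becomes the vanishing of the composite $\susp^{-2}\kosobj{L}{x}\xrightarrow{\susp^{-1}a}\susp^{-1}L\xrightarrow{b}M$, where $a\colon\susp^{-1}\kosobj{L}{x}\to L$ is the canonical morphism of the $x$-Koszul object on $L$ and $b\colon\susp^{-1}L\to M$ is the canonical morphism of $\kosobj{M}{y}$. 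On the other hand, by \cref{zero_one} applied to the object $\kosobj{M}{x}$ and the element $y$, condition $(2)$ is equivalent to $\kosobj{M}{(x,y)}=0$, hence---using the same isomorphism and \cref{zero_one} applied now to $L$ and the productive element $x$---to the vanishing $a=0$. Thus the goal is reduced to: $b\circ\susp^{-1}a=0$ implies $a=0$.

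Here productivity enters decisively: since $x$ is productive, $x(\kosobj{L}{x})=0$, so $x$ acts as zero on $\susp^{-2}\kosobj{L}{x}$ and, by naturality of $x$, every multiplication-by-$x$ map on $\Hom[*]{\cat{T}}{\susp^{-2}\kosobj{L}{x}}{-}$ vanishes. Applying $\Hom[*]{\cat{T}}{\susp^{-2}\kosobj{L}{x}}{-}$ to the exact triangle that exhibits $b$ as the fiber of $y(M)\colon M\to\susp^{|y|}M$, the hypothesis $b\circ\susp^{-1}a=0$ says that $\susp^{-1}a$ lifts along this fiber; feeding this lift through the defining triangle of $\kosobj{L}{x}$ and using the vanishing of the $x$-multiplications, I would extract a factorization of $a$ through the structure maps of $\kosobj{L}{x}$.

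The main obstacle is the final step: a priori such a factorization only shows that $a$ lies in the kernel of post-composition with $b$, which need not be zero, so it does not immediately force $a=0$. To close the gap I would argue as in the proof of \cref{zero_one}, where a split monomorphism is upgraded to an isomorphism by the naturality of the central element: productivity of $x$ should turn the factorization obtained above into a splitting of $x(L)\colon L\to\susp^{|x|}L$, and naturality of $x$ then upgrades this splitting to $x(L)$ being an isomorphism, that is $\kosobj{L}{x}=0$ and hence $a=0$. This naturality-upgrade is the crux; everything before it is formal manipulation of the two defining triangles together with the long exact sequence.
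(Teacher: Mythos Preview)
Your reformulation is correct: after swapping via \cref{kos_obj_commute} and invoking \cref{zero_one}, the task is indeed to deduce $a=0$ (equivalently $\kosobj{L}{x}=0$) from $b\circ\susp^{-1}a=0$, where $L=\kosobj{M}{y}$, $a$ is the connecting map of $\kosobj{L}{x}$, and $b\colon\susp^{-1}L\to M$ is the connecting map of $\kosobj{M}{y}$. But the argument you sketch from that point does not close.

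The factorization you extract goes the wrong way. From $b\circ\susp^{-1}a=0$ you lift $\susp^{-1}a$ along the fiber of $b$, obtaining $\susp^{-1}a=c\circ h$ with $c\colon\susp^{|y|-1}M\to\susp^{-1}L$. This places $\susp^{-1}a$ in an object carrying no evident $x$-structure, and neither productivity of $x$ nor the triangle of $\kosobj{L}{x}$ produces a section of $x(L)$ from this data. Your invocation of the \cref{zero_one} naturality trick is misplaced: that argument upgrades a one-sided inverse of $x(M)$ to a two-sided one, but here no one-sided inverse is available.

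The useful factorization is the dual one: since $b\circ\susp^{-1}a=0$, the map $b$ factors through the \emph{cofiber} of $\susp^{-1}a$, and that cofiber is precisely $x\colon\susp^{-1}L\to\susp^{|x|-1}L$. So $b=g\circ x$ for some $g$. This is where the paper's key idea enters, and it is the step you are missing: apply the Koszul-exact functor $\kosobj{(-)}{x}$ to the identity $b=g\circ x$. By $R$-linearity $\kosobj{(-)}{x}$ sends multiplication by $x$ to $x(\kosobj{-}{x})$, which vanishes by productivity, so $\kosobj{b}{x}=0$. By the compatibility in \cref{Kos_functor_compatible}, the map $\kosobj{b}{x}$ is, up to isomorphism and suspension, exactly the morphism in (2). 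This yields (2) directly, without the detour through ``$a=0$''.
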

\begin{proof}
The implication \cref{zero_inductive_step:Kos_last_map} $\implies$ \cref{zero_inductive_step:Kos_map} is clear. We assume \cref{zero_inductive_step:Kos_map} holds. As $x$ and $y$ are Koszul-exact, there is a commutative diagram
\begin{equation*}
\begin{tikzcd}
\susp^{-1} \kosobj{M}{(x,y)} \ar[r] \ar[d] \ar[dr,"0"] & \susp^{-1} \kosobj{M}{y} \ar[r,"x"] \ar[d] & \susp^{|x|-1} \kosobj{M}{y} \ar[r] \ar[dl,dashed] & \susp^{-1} \kosobj{M}{(x,y)} \nospacepunct{,} \\
\susp^{-1} \kosobj{M}{x} \ar[r] & M 
\end{tikzcd}
\end{equation*}
where the top row is an exact triangle; see \cref{Kos_functor_compatible}. As the composition along the diagonal of the square is zero by assumption, we obtain a morphism $\susp^{|x|-1} \kosobj{M}{y} \to M$; that is
\begin{equation*}
(\susp^{-1} \kosobj{M}{y} \to M) = (\susp^{-1} \kosobj{M}{y} \xrightarrow{x} \susp^{|x|-1} \kosobj{M}{y} \to M)\,.
\end{equation*}
Applying $\kosobj{(-)}{x}$ to this identification, the right-hand side is the zero morphism as $x$ is productive. The left-hand side is, up to isomorphism, the morphism $\susp^{-1} \kosobj{M}{(x,y)} \to \kosobj{M}{x}$ by \cref{Kos_functor_compatible}. 
\end{proof}

\begin{proof}[Proof of \cref{nonzero_Kos_map_equiv}]
The implications \cref{nonzero_Kos_map_equiv:Kos_map} $\iff$ \cref{nonzero_Kos_map_equiv:Kos_obj}, \cref{nonzero_Kos_map_equiv:Kosn_map} $\iff$ \cref{nonzero_Kos_map_equiv:Kosn_obj} and \cref{nonzero_Kos_map_equiv:all_Kos_map} $\iff$ \cref{nonzero_Kos_map_equiv:all_Kos_obj} follow from \cref{zero_inductive_step,zero_one}.

The implications \cref{nonzero_Kos_map_equiv:all_Kos_map} $\implies$ \cref{nonzero_Kos_map_equiv:Kos_map} $\implies$ \cref{nonzero_Kos_map_equiv:Kosn_map} hold trivially. Further, the implication \cref{nonzero_Kos_map_equiv:Kosn_map} $\implies$ \cref{nonzero_Kos_map_equiv:Kos_map} holds as the morphism in the latter factors through the morphism of the former; see \cref{kos_product_diagram}. 
It remains to show \cref{nonzero_Kos_map_equiv:Kos_obj} $\implies$ \cref{nonzero_Kos_map_equiv:all_Kos_obj}. We assume $\kosobj{M}{(x_1, \ldots, x_t)}$ is zero. As the powers of the elements $x_1, \ldots, x_t$ are Koszul-exact, we obtain from \cref{kos_product} exact triangles
\begin{equation*}
\susp^{|x_i|-1} \kosobj{M}{\bmx^\bmalpha} \to \kosobj{M}{\bmx^{\bmalpha-(\alpha_i-1)\bme_i}} \to \kosobj{M}{\bmx^{\bmalpha+\bme_i}} \to \susp^{|x_i|} \kosobj{M}{\bmx^\bmalpha}
\end{equation*}
where $\bmalpha = (\alpha_1, \ldots, \alpha_t) \in \BN^t$ and $\bmx^\bmalpha = (x_1^{\alpha_1}, \ldots, x_t^{\alpha_t})$ and $\bme_i = (0, \ldots, 1, \ldots, 0)$. Hence, by induction, $\kosobj{M}{(x_1^n, \ldots, x_t^n)}$ is zero and thus \cref{nonzero_Kos_map_equiv:all_Kos_obj} holds.
\end{proof}

Finally, we obtain the connection of the canonical morphism for the Koszul object and the local cohomology:

\begin{proposition} \label{nonzero_gamma_Kos}
Let $R$ be a graded-commutative noetherian ring and $\cat{T}$ be a compactly generated $R$-linear triangulated category. For any $M \in \cat{T}$ and $x_1, \ldots, x_t \in R$ the following are equivalent
\begin{enumerate}[start=3]
\item \label{nonzero_gamma_Kos:all_Kos_map} The map $\susp^{-t} \kosobj{M}{(x_1^n, \ldots, x_t^n)} \to M$ is zero for all $n \geq 1$.
\item \label{nonzero_gamma_Kos:Gamma_map} The map $\Gamma_{\cV(x_1, \ldots, x_t)} M \to M$ is zero;
\primeitem \label[enumprimei]{nonzero_gamma_Kos:Gamma_obj} The object $\Gamma_{\cV(x_1, \ldots, x_t)} M$ is zero;
\item \label{nonzero_gamma_Kos:supp} $\supp_R(M) \cap \cV(x_1, \ldots, x_t) \neq \emptyset$. 
\end{enumerate}
\end{proposition}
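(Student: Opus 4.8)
The plan is to prove the cycle \cref{nonzero_gamma_Kos:all_Kos_map} $\Rightarrow$ \cref{nonzero_gamma_Kos:Gamma_obj} $\Rightarrow$ \cref{nonzero_gamma_Kos:Gamma_map} $\Rightarrow$ \cref{nonzero_gamma_Kos:all_Kos_map}, and to handle \cref{nonzero_gamma_Kos:supp} separately (where I read \cref{nonzero_gamma_Kos:supp} as the vanishing $\supp_R(M)\cap\cV(x_1,\ldots,x_t)=\emptyset$, which is what is equivalent to the other conditions). Throughout, write $\cV\colonequals\cV(x_1,\ldots,x_t)$ and $K_n\colonequals\susp^{-t}\kosobj{M}{(x_1^n,\ldots,x_t^n)}$, and let $g_n\colon K_n\to M$ be the canonical morphisms, so that \cref{nonzero_gamma_Kos:all_Kos_map} asserts $g_n=0$ for all $n$. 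By \cref{Gamma_hocolim} these assemble into an isomorphism $\hocolim_n K_n\xrightarrow{\cong}\Gamma_{\cV}M$ compatible with the maps to $M$; concretely, \eqref{Kos_obj_Gamma_can_mor} factors each $g_\ell$ as $K_\ell\xrightarrow{\tilde g_\ell}\Gamma_{\cV}M\xrightarrow{\gamma_M}M$, where $\gamma_M\colon\Gamma_{\cV}M\to M$ is the canonical morphism and $\tilde g_\ell$ is the structure morphism into the homotopy colimit (followed by the isomorphism). The two easy implications follow at once: if $\Gamma_{\cV}M=0$ then $\gamma_M=0$, giving \cref{nonzero_gamma_Kos:Gamma_obj} $\Rightarrow$ \cref{nonzero_gamma_Kos:Gamma_map}; and if $\gamma_M=0$ then $g_\ell=\gamma_M\circ\tilde g_\ell=0$ for all $\ell$, giving \cref{nonzero_gamma_Kos:Gamma_map} $\Rightarrow$ \cref{nonzero_gamma_Kos:all_Kos_map}.

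The substantive implication is \cref{nonzero_gamma_Kos:all_Kos_map} $\Rightarrow$ \cref{nonzero_gamma_Kos:Gamma_obj}. The subtlety is that a homotopy colimit can be nonzero even when every structure morphism into it vanishes, so I cannot argue directly on objects. Instead I first upgrade the vanishing of the $g_\ell$ to the vanishing of the $\tilde g_\ell$. Since $\kosobj{M}{(x_1^\ell,\ldots,x_t^\ell)}$ lies in $\cat{T}_{\cV}$ by \cite[Proposition~2.11]{Benson/Iyengar/Krause:2011b}, so does $K_\ell$; the colocalization adjunction for $\Gamma_{\cV}$ (see \cref{sec:local_coh}) then says that post-composition with $\gamma_M$ induces an isomorphism $\Hom{\cat{T}}{K_\ell}{\Gamma_{\cV}M}\xrightarrow{\cong}\Hom{\cat{T}}{K_\ell}{M}$. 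As $\gamma_M\circ\tilde g_\ell=g_\ell=0$ by assumption, injectivity forces $\tilde g_\ell=0$ for every $\ell$.

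Now I pass to homology to circumvent the homotopy-colimit obstruction. For any compact object $C$, the isomorphism \eqref{hocolim_colim_compact} together with \cref{Gamma_hocolim} identifies $\ch{C}{\Gamma_{\cV}M}$ with the filtered colimit $\colim_n\ch{C}{K_n}$, and under this identification $\ch{C}{\tilde g_\ell}$ is precisely the structure map $\ch{C}{K_\ell}\to\colim_n\ch{C}{K_n}$. Since every element of a filtered colimit of graded $R$-modules lies in the image of some structure map, and all these maps now vanish, we obtain $\ch{C}{\Gamma_{\cV}M}=0$ for every compact $C$; as $\cat{T}$ is compactly generated, this gives $\Gamma_{\cV}M=0$. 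Finally, for \cref{nonzero_gamma_Kos:Gamma_obj} $\iff$ \cref{nonzero_gamma_Kos:supp} I would invoke the support formula $\supp_R(\Gamma_{\cV}M)=\supp_R(M)\cap\cV$ from \cite[Theorem~5.6]{Benson/Iyengar/Krause:2008} together with the faithfulness of support from \cite[Corollary~5.7(1)]{Benson/Iyengar/Krause:2008}: the object $\Gamma_{\cV}M$ vanishes if and only if its support is empty, if and only if $\supp_R(M)\cap\cV(x_1,\ldots,x_t)=\emptyset$.

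I expect \cref{nonzero_gamma_Kos:all_Kos_map} $\Rightarrow$ \cref{nonzero_gamma_Kos:Gamma_obj} to be the main obstacle, exactly because of the homotopy-colimit phenomenon noted above, and the whole difficulty is concentrated in bridging ``the maps to $M$ vanish'' with ``the object $\Gamma_{\cV}M$ vanishes.'' The resolution rests on two structural facts that must be used in tandem: that each $K_\ell$ is $\cV$-torsion, so that the colocalization adjunction promotes $g_\ell=0$ to $\tilde g_\ell=0$; and that applying $\ch{C}{-}$ for compact $C$ turns the homotopy colimit into an honest filtered colimit, where the simultaneous vanishing of all structure maps genuinely forces vanishing of the colimit. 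Neither fact alone suffices, which is why the argument routes through homology rather than through the objects $\kosobj{M}{(x_1^n,\ldots,x_t^n)}$ directly (in contrast to \cref{nonzero_Kos_map_equiv}, where Koszul-exactness let one reduce to the vanishing of those objects).
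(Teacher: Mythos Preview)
Your argument is correct, and you rightly read \cref{nonzero_gamma_Kos:supp} as asserting $\supp_R(M)\cap\cV(x_1,\ldots,x_t)=\emptyset$. The route, however, differs from the paper's in the one nontrivial place. The paper separates the work into two short steps: for \cref{nonzero_gamma_Kos:Gamma_map} $\Rightarrow$ \cref{nonzero_gamma_Kos:Gamma_obj} it simply applies $\Gamma_{\cV}$ to the morphism $\gamma_M\colon\Gamma_{\cV}M\to M$ and uses $\Gamma_{\cV}\rL_{\cV}=0$ together with $\Gamma_{\cV}^2=\Gamma_{\cV}$ to see that $\Gamma_{\cV}(\gamma_M)$ is an isomorphism, so $\gamma_M=0$ forces $\Gamma_{\cV}M=0$; and for \cref{nonzero_gamma_Kos:all_Kos_map} $\Rightarrow$ \cref{nonzero_gamma_Kos:Gamma_map} it invokes \cref{compatibility_hocolim_gamma}, the point being that if every $g_\ell$ vanishes then the zero map is an admissible choice of induced morphism $\varphi\colon\hocolim_n K_n\to M$, and that remark then identifies $\gamma_M$ with $\varphi=0$. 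You instead go straight from \cref{nonzero_gamma_Kos:all_Kos_map} to \cref{nonzero_gamma_Kos:Gamma_obj}: the colocalization adjunction upgrades $g_\ell=0$ to $\tilde g_\ell=0$, and then passing to $\ch{C}{-}$ for compact $C$ converts the homotopy colimit into an honest filtered colimit whose structure maps all vanish. The paper's idempotency trick is slicker, but your argument is self-contained and avoids leaning on the somewhat implicit \cref{compatibility_hocolim_gamma}. For \cref{nonzero_gamma_Kos:supp} the paper cites \cite[Corollary~5.7(2)]{Benson/Iyengar/Krause:2008} directly, while you combine \cite[Theorem~5.6]{Benson/Iyengar/Krause:2008} with \cite[Corollary~5.7(1)]{Benson/Iyengar/Krause:2008}; these are equivalent.
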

\begin{proof}
The implication \cref{nonzero_gamma_Kos:Gamma_obj} $\implies$ \cref{nonzero_gamma_Kos:Gamma_map} is clear. For the converse, as $\rL_\cV$ and $\Gamma_\cV$ are associated localizing and colocalizing functors for any specialization closed set $\cV$, we know 
\begin{equation*}
\Gamma_{\cV(x_1, \ldots, x_t)} \rL_{\cV(x_1, \ldots, x_t)} = 0 \quad \text{and} \quad (\Gamma_{\cV(x_1, \ldots, x_t)})^2 = \Gamma_{\cV(x_1, \ldots, x_t)}\,;
\end{equation*}
see \cite[Proposition~4.12.1]{Krause:2010}. Hence 
\begin{equation*}
\Gamma_{\cV(x_1, \ldots, x_t)}(\Gamma_{\cV(x_1, \ldots, x_t)} M \to M)
\end{equation*}
is an isomorphism. So if $\Gamma_{\cV(x_1, \ldots, x_t)} M \to M$ is zero this means $\Gamma_{\cV(x_1, \ldots, x_t)} M = 0$. 

The equivalence \cref{nonzero_gamma_Kos:all_Kos_map} $\iff$ \cref{nonzero_gamma_Kos:Gamma_map} holds by the compatibilities \cref{compatibility_hocolim_gamma}, and the equivalence of \cref{nonzero_gamma_Kos:Gamma_map} $\iff$ \cref{nonzero_gamma_Kos:supp} follows from \cite[Corollary~5.7(2)]{Benson/Iyengar/Krause:2008}. 
\end{proof}

\subsection{Definition of regular sequence}

\begin{figure}
\begin{center}
\begin{tikzcd}[/tikz/execute at end picture={
 \node (large) [rectangle, draw, dotted, fit=(4) (4p)]{};
 }]
\cref{nonzero_Kos_map_equiv:Kosn_map} \ar[r,Leftrightarrow] \ar[d,shift left,Rightarrow,dashed] & \cref{nonzero_Kos_map_equiv:Kos_map} \ar[d,shift left,Rightarrow,dashed] & \cref{nonzero_Kos_map_equiv:all_Kos_map} \ar[l,Rightarrow] \ar[d,shift left,Rightarrow,dashed] \ar[r,Leftrightarrow] & |[alias=4]| \cref{nonzero_gamma_Kos:Gamma_map} \ar[d,Leftrightarrow] \\
\cref{nonzero_Kos_map_equiv:Kosn_obj} \ar[u,shift left,Rightarrow] & \cref{nonzero_Kos_map_equiv:Kos_obj} \ar[u,shift left,Rightarrow] \ar[r,Rightarrow,dashed] & \cref{nonzero_Kos_map_equiv:all_Kos_obj} \ar[u,shift left,Rightarrow] & |[alias=4p]| \cref{nonzero_gamma_Kos:Gamma_obj}
\end{tikzcd}
\end{center}
\caption{The diagram depicts the implications proven in \cref{nonzero_Kos_map_equiv,nonzero_gamma_Kos}. The solid implications hold without any further assumptions, the dashed implications hold when the elements $x_i^n$ are Koszul-exact and productive, and the statements in the dotted box exist when $\cat{T}$ is compactly generated and $R$ noetherian.} \label{fig:implications_nonzero}
\end{figure}
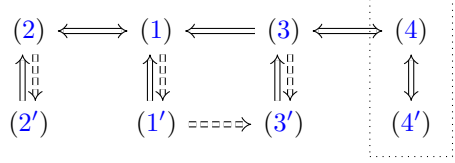

By analyzing the proofs of \cref{nonzero_Kos_map_equiv,nonzero_gamma_Kos}, we observe that the assumptions on the elements $x_1, \ldots, x_t$ in \cref{nonzero_Kos_map_equiv} are only used for some of the implications. We summarize the proven implications in \cref{fig:implications_nonzero}. In the \namecref{fig:implications_nonzero} it is marked which implications require which assumptions. We see, that the negation of \cref{nonzero_Kos_map_equiv:Kos_map} implies the negations of all the other statements. This explains the following definition:

\begin{definition} \label{defn_reg_sequence}
Let $R$ be a graded-commutative ring and $\cat{T}$ be an $R$-linear triangulated category. For $C,M \in \cat{T}$, we say a sequence $x_1, \ldots, x_t \in R$ is \emph{$(C,M)$-regular} if
\begin{enumerate}
\item \label{defn_reg_sequence:nzd} $x_s$ is $(C,\kosobj{M}{(x_1, \ldots, x_{s-1})})$-regular for $1 \leq s \leq t$; and
\item \label{defn_reg_sequence:nz} the morphism $\susp^{-t} \kosobj{M}{(x_1, \ldots, x_t)} \to M$ is non-zero.
\end{enumerate}
We say a sequence $x_1, \ldots, x_t$ is \emph{weakly $(C,M)$-regular} if it satisfies \cref{defn_reg_sequence:nzd}.
\end{definition}

Another reason we choose \cref{defn_reg_sequence:nz} is that this yields nice applications of regular sequences to level; see \cref{sec:application}.

While the definition of a regular element can be purely formulated for the $C$-homology $M$, the same does not holds for regular sequences. That is, non-vanishing need not be detected in the $C$-homology:

\begin{example}
Let $\cat{T} = \dcat{\BZ}$ and $R=\BZ$ and consider the complex
\begin{equation*}
M = (\dots \to 0 \to \BZ/3 \to \BZ/2 \to 0\to \dots)
\end{equation*}
with zero differentials. For the complex $C=\BZ/3$, concentrated in degree $0$, the element $x=2 \in \BZ$ is $(C,M)$-regular. We have $\ch{C}{\kosobj{M}{x}} = 0$, but $\kosobj{M}{x}\neq 0$.
Thus $x$ is a $(C,M)$-regular sequence, but $x$ is not a regular sequence for the graded module $\ch{C}{M}$.
\end{example}

Let $x_1, \ldots x_t$ be a $(C,M)$-regular sequence. By \cref{defn_reg_sequence}, each morphism in the non-zero composition
\begin{equation*}
\susp^{-t} \kosobj{M}{(x_1, \ldots, x_t)} \to \susp^{-t+1} \kosobj{M}{(x_1, \ldots, x_{t-1})} \to \cdots \to \susp^{-1} \kosobj{M}{x_1} \to M
\end{equation*}
is $C$-ghost. Moreover, when $R$ is noetherian, $\cat{T}$ compactly generated and $C$ compact, then by \cref{reg_seq_equiv,nonzero_gamma_Kos} the composition
\begin{equation*}
\Gamma_{\cV(x_1, \ldots, x_t)} M \to \Gamma_{\cV(x_1, \ldots, x_{t-1})} M \to \cdots \to \Gamma_{\cV(x_1)} M \to M
\end{equation*}
is non-zero and each morphism is $C$-ghost. In particular, both compositions are non-zero $t$-fold $C$-ghost map. However, it is not enough to assume that the compositions are non-zero $t$-fold $C$-ghost to obtain regularity. 

\begin{remark}
As in commutative algebra, one can define a notion of \emph{depth} as the maximal length of a $(C,M)$-regular sequence. This is a new notion and differs from other definitions of depth for complexes over a ring \cite{Foxby:1979,Iyengar:1999,Foxby/Iyengar:2003,Shaul:2020} or of depth in a triangulated category \cite{Liu/Liu/Yang:2016,Liu/Wang/Yang:2021}. We plan to explore this invariant in future work. 
\end{remark}

\section{Examples}

\subsection{Derived category of modules over a commutative ring}

Let $A$ be a commutative ring. Then $A$, viewed as a graded ring concentrated in degree zero, acts on the derived category of $A$-modules $\dcat{A}$. As $\kosobj{M}{x} \cong \Kos^A(x;M) \cong \Kos^A(x;A) \otimes_A M$ for every $A$-complex $M$ and multiplication by $x$ is null-homotopic on $\Kos^A(x;A)$, every $x \in A$ is productive. In particular, the conditions in \cref{nonzero_Kos_map_equiv} are equivalent.

For an $A$-module $M$, a sequence $x_1, \ldots, x_t \in A$ is $(A,M)$-regular if and only if $x_1, \ldots, x_t$ is $M$-regular in the classical sense. This is clear from $\ch{}{M} \cong M$, and also from \cref{regular_elt_comm_ring}. For an $A$-complex $M$, a sequence $x_1, \ldots, x_t \in A$ is $(A,M)$-regular if and only if $x_1, \ldots, x_t$ is $\ch{}{M}$-regular and $\susp^{-t} M \to \Kos^A(x_1, \ldots, x_t;M)$ is non-zero in the derived category $\dcat{A}$. This notion differs from other notions of regular sequences on complexes. 

In \cite{Christensen:2001} an element is called regular on an $A$-complex $M$ if it is a non-zero divisor on the highest homology. There is also a slightly stronger definition, which is connected to the definition of depth from \cite{Foxby/Iyengar:2003}. While the initial condition for a regular element in \cite{Christensen:2001} differs from ours, the inductive definition coincides with ours, also using the Koszul complex. Christensen's notion of regular elements and regular sequences was generalized to dg modules by \cite{Minamoto:2021}.

While $A$ always acts on $\dcat{A}$, often there are other, bigger, (graded) rings acting on $\dcat{A}$, that provide longer regular sequences. 

\subsection{Action of Hochschild cohomology} \label{HH_action}

Let $A$ be a commutative ring and $B$ a dg $A$-algebra. We write $\dcat{B}$ for the derived category of dg $B$-modules; that is the category with dg $B$-modules and morphisms of dg $B$-modules with the quasi-isomorphisms formally inverted. The Hochschild--Shukla cohomology of $B$ over $A$ is
\begin{equation*}
\HH{B/A} \colonequals \Ext[*]{B \lotimes_A \op{B}}{B}{B} = \Hom[*]{\dcat{B \lotimes_A \op{B}}}{B}{B}\,.
\end{equation*}
It is a graded-commutative ring that acts on $\dcat{B}$ via the tensor action
\begin{equation*}
- \lotimes_B - \colon \dcat{B \lotimes_A \op{B}} \times \dcat{B} \to \dcat{B}\,.
\end{equation*}
The Hochschild--Shukla cohomology and the tensor action are computed by choosing a free dg resolution of $B$ over $A$. When $A$ is a field, then Hochschild--Shukla cohomology is precisely Hochschild cohomology. 

\begin{example}
Let 
\begin{equation*}
E \colonequals \Kos^A(f_1, \ldots, f_c;A) = A \braket{\xi_1, \ldots, \xi_c | \partial(\xi_i) = f_i}
\end{equation*}
be the Koszul complex on a sequence $f_1, \ldots, f_c \in A$. The Koszul complex is free over $A$ and has a canonical dg $A$-algebra structure. By \cite[2.9]{Avramov/Buchweitz:2000}, a free dg algebra resolution of $E$ over $E \otimes_A \op{E}$ is given by
\begin{equation*}
(E \otimes_A \op{E})\braket{y_1, \ldots, y_c | \partial(y_i) = \xi_i \otimes 1 - 1 \otimes \xi_i} \xrightarrow{\sim} E\,.
\end{equation*}
Hence, one has
\begin{equation*}
\begin{aligned}
\HH{E/A} &= \Ext[*]{E \otimes_A \op{E}}{E}{E} = \ch{}{\Hom{E \otimes_A \op{E}}{(E \otimes_A \op{E})\braket{y_1, \ldots, y_c}}{E}} \\
&= \ch{}{E}[\chi_1, \ldots, \chi_c]
\end{aligned}
\end{equation*}
where $\chi_i$ is the dual of $y_i$. 

It is straightforward to check that $\kosobj{E}{\chi_i} \cong (E \otimes_A \op{E})\braket{y_1, \ldots, \hat{y}_i, \ldots, y_c}$. Hence $\chi_i(\kosobj{E}{\chi_i}) = 0$. In particular, \cref{nonzero_Kos_map_equiv} holds for $\chi_1, \ldots, \chi_c$, as well as for any of their products by \cref{productive_product}.

When $A$ is a regular local ring with residue field $k$, then, by \cite[Remark~3.2.6]{Pollitz:2019}, 
\begin{equation*}
\Hom[*]{\dcat{E}}{k}{k} = \Ext[*]{E}{k}{k} \cong k[\chi_1, \ldots, \chi_c] \otimes_k \bigwedge (\susp^{-1} k^{\dim A})
\end{equation*}
as left $\HH{E/A}$-modules. In particular, the sequence $\chi_1, \ldots, \chi_c$ is $(k,k)$-regular in $\dcat{E}$. 
\end{example}

\begin{example} \label{example_ci}
Let $A$ be a regular local ring and $f_1, \ldots, f_c$ an $A$-regular sequence and $B = A/\braket{f_1, \ldots, f_c}$. Then the Koszul complex $E = \Kos^A(f_1, \ldots, f_c;A)$  is a free dg algebra resolution of $B$ over $A$. Hence
\begin{equation*}
\HH{B/A} = \HH{E/A} = B[\chi_1, \ldots, \chi_c]
\end{equation*}
and $\chi_1, \ldots, \chi_c$ is $(k,k)$-regular in $\dcat{B}$. The elements $\chi_1, \ldots, \chi_c$ are precisely the cohomological operators; see \cite{Avramov/Sun:1998} for other equivalent descriptions.
\end{example}

Applying \cite[Theorem~B]{Letz:2025} to \cref{example_ci}, we obtain the following statement.

\begin{corollary}
Let $A$ be a regular local ring, $f_1, \ldots, f_c$ an $A$-regular sequence and $B = A/\braket{f_1, \ldots, f_c}$. The Rouquier dimension of $\dbcat{\mod{B}}$ is at least $c$.
\end{corollary}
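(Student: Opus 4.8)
The plan is to obtain the corollary as a direct application of \cite[Theorem~B]{Letz:2025} to the regular sequence produced in \cref{example_ci}. Write $k$ for the common residue field of $A$ and $B$, and recall from \cref{example_ci} that $R = \HH{B/A} = B[\chi_1, \ldots, \chi_c]$ acts on $\dcat{B}$ and that $\chi_1, \ldots, \chi_c$ is a $(k,k)$-regular sequence of length $c$. Since \cite[Theorem~B]{Letz:2025} asserts that the length of any $(M,M)$-regular sequence is a lower bound for the Rouquier dimension of an Ext-finite $R$-linear triangulated category, applying it with $M = k$ and $t = c$ yields the desired inequality, provided the hypotheses are arranged inside $\dbcat{\mod{B}}$ rather than the unbounded category $\dcat{B}$.

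The first step is therefore to transfer the regular sequence into the bounded category. The module $k$ is finitely generated over $B$, hence an object of $\dbcat{\mod{B}}$, and for each homogeneous $\chi \in R$ the Koszul object $\kosobj{k}{\chi}$ is the cone of $\chi \colon k \to \susp^{|\chi|} k$ and so again lies in $\dbcat{\mod{B}}$. Iterating, every object occurring in \cref{defn_reg_sequence} for the sequence $\chi_1, \ldots, \chi_c$ stays in $\dbcat{\mod{B}}$. Because the $(k,k)$-regularity conditions of \cref{defn_reg_sequence} depend only on the graded module $\ch[*]{k}{k} = \Ext[*]{B}{k}{k}$ together with the canonical morphisms to $k$, and these are computed identically in $\dcat{B}$ and in $\dbcat{\mod{B}}$, the sequence $\chi_1, \ldots, \chi_c$ remains a $(k,k)$-regular sequence of length $c$ in $\dbcat{\mod{B}}$.

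The remaining, and main, point is to verify the finiteness hypothesis of \cite[Theorem~B]{Letz:2025} for $\dbcat{\mod{B}}$: for finitely generated $B$-modules $X$ and $Y$ each $\Ext[n]{B}{X}{Y}$ is a finitely generated $B$-module, so $\dbcat{\mod{B}}$ is Ext-finite in the relevant sense and the theorem applies, giving that the Rouquier dimension of $\dbcat{\mod{B}}$ is at least $c$. I expect the genuine subtlety to lie precisely in matching the finiteness condition assumed by \cite[Theorem~B]{Letz:2025} with what holds for $\dbcat{\mod{B}}$; should that reference instead demand Ext-finiteness over the residue field $k$ (which forces $B$ to be Artinian), I would first reduce to the Artinian case by quotienting out images $g_1, \ldots, g_{d-c}$ of a regular system of parameters forming a $B$-regular sequence, available since $B$ is Cohen--Macaulay of dimension $d-c$, obtaining an Artinian complete intersection $\bar B$ of the same codimension $c$ that inherits the sequence $\chi_1, \ldots, \chi_c$; the hard part would then be comparing the Rouquier dimensions of $\dbcat{\mod{B}}$ and $\dbcat{\mod{\bar B}}$, as the base-change functor along $B \to \bar B$ does not obviously generate the target.
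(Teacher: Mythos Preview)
Your overall strategy matches the paper's: feed the $(k,k)$-regular sequence $\chi_1,\ldots,\chi_c$ from \cref{example_ci} into \cite[Theorem~B]{Letz:2025}. The genuine gap is exactly where you suspected it, in the finiteness hypothesis.

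The Ext-finiteness needed by \cite[Theorem~B]{Letz:2025} is finiteness over the acting ring $R=B[\chi_1,\ldots,\chi_c]$: one needs that the \emph{total} graded module $\Hom[*]{\dcat{B}}{M}{N}$ is finitely generated over $R$ for all $M,N\in\dbcat{\mod{B}}$. What you verify---that each individual $\Ext[n]{B}{X}{Y}$ is a finitely generated $B$-module---is immediate from the noetherian hypothesis but does not imply this. Over a complete intersection of positive codimension the direct sum $\bigoplus_n\Ext[n]{B}{k}{k}$ is infinite-dimensional over $B$, so degree-wise finiteness over $B$ says nothing about finite generation over $B[\chi_1,\ldots,\chi_c]$. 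That finite generation is a substantive theorem (Gulliksen's finiteness theorem, in the form of \cite[5.1]{Avramov/Sun:1998}), and the paper's proof simply cites it and is then done. Your contingency plan of passing to an Artinian quotient is aimed at a different interpretation of Ext-finiteness and, as you note, would leave you with a hard dimension comparison; it is unnecessary once the correct finiteness statement is invoked.
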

\begin{proof}
For any $M, N$ in $\dbcat{\mod{B}}$, the bounded derived category of finitely generated $B$-modules, the graded module $\Hom[*]{\dcat{B}}{M}{N}$ is finitely generated over $B[\chi_1, \ldots, \chi_c]$; see \cite[5.1]{Avramov/Sun:1998}. The claim now follows from \cite[Theorem~B]{Letz:2025} as $\chi_1, \ldots, \chi_c$ is a $(k,k)$-regular sequence where $k$ is the residue field of $B$.
\end{proof}

\subsection{Group algebras}

Let $G$ be a finite group and $k$ a field of characteristic $p>0$. The bounded derived category $\dbcat{\mod{kG}}$ is linear over the group cohomology ring $\ch{}{G,k}=\Ext[*]{kG}{k}{k}$ since it is tensor triangulated with unit $k$.
A $(k,k)$-regular sequence is an ordinary regular sequence in the graded-commutative ring $\ch{}{G,k}$; use \cref{prop:MM-regular} for the non-zero condition.

\begin{example}
Let $G=(\BZ/p)^r$ be an elementary abelian $p$-group of rank $r$. Its group cohomology over the field $k$ of characteristic $p$ is 
\begin{equation*}
 \ch{}{G,k} = k[t_1,\ldots,t_r]
\end{equation*}
with $|t_i|=1$ for $1\leq i\leq r$ if $p=2$, and
\begin{equation*}
 \ch{}{G,k} = k[x_1,\ldots,x_r]\otimes_k \Lambda(y_1,\ldots, y_r)
\end{equation*}
with $|x_i|=2$, $|y_i|=1$, for $1\leq i\leq r$ if $p$ is odd. In either case, the polynomial generators form a $(k,k)$-regular sequence of length $r$.
\end{example}

For $p=2$, the Rouquier dimension of $\dbcat{\mod{kG}}$ is known to be at least the $p$-rank of $G$; see \cite[Theorem ~4.9]{Rouquier:2006}. Using \cite[Theorem~B]{Letz:2025}, we extend this inequality to all primes $p$.

\begin{corollary}
Let $G$ be a finite group and and $k$ a field of characteristic $p>0$. The Rouquier dimension of $\dbcat{\mod{kG}}$ is at least the $p$-rank of $G$.
\end{corollary}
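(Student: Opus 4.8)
The plan is to reduce to a maximal elementary abelian subgroup and transport the bound along restriction, invoking \cite[Theorem~B]{Letz:2025} at the level of that subgroup. Write $r$ for the $p$-rank of $G$ and fix a subgroup $E \cong (\BZ/p)^r$ realizing it. Since $kE$ is a finite-dimensional algebra, $\dbcat{\mod{kE}}$ is Ext-finite, and the preceding example (applied to $E$) produces a $(k,k)$-regular sequence of length $r$ in $\ch{}{E,k}$, namely the polynomial generators. Applying \cite[Theorem~B]{Letz:2025} therefore shows that the Rouquier dimension of $\dbcat{\mod{kE}}$ is at least $r$; this is the only place the regular-sequence machinery enters.

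The remaining task is to show that the Rouquier dimension cannot drop when passing from $G$ to $E$, that is, that the Rouquier dimension of $\dbcat{\mod{kE}}$ is at most that of $\dbcat{\mod{kG}}$. First I would record that the restriction functor $\mathrm{Res}^G_E \colon \dbcat{\mod{kG}} \to \dbcat{\mod{kE}}$ is exact and essentially surjective up to direct summands. Indeed, for any $N \in \dbcat{\mod{kE}}$ the object $\mathrm{Ind}_E^G N$ lies in $\dbcat{\mod{kG}}$, and the Mackey decomposition of $\mathrm{Res}^G_E \mathrm{Ind}_E^G N$ has the summand indexed by the trivial double coset equal to $N$; hence $N$ is a direct summand of $\mathrm{Res}^G_E \mathrm{Ind}_E^G N$.

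Next I would use this to compare the dimensions. Suppose $\dbcat{\mod{kG}}$ has Rouquier dimension $d$, so there is a single object $g$ from which every object is built using at most $d$ cones together with suspensions, finite coproducts, and direct summands. Since $\mathrm{Res}^G_E$ is exact and additive it preserves all of these operations, so every object in its image is built from $\mathrm{Res}^G_E g$ in the same number of steps. By the previous paragraph every object of $\dbcat{\mod{kE}}$ is a summand of such an object, and the classes of objects built in at most $d$ steps are closed under summands; hence $\dbcat{\mod{kE}}$ is built from $\mathrm{Res}^G_E g$ in at most $d$ steps and its Rouquier dimension is at most $d$. Combining with the first paragraph gives $d \geq r$, as claimed.

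The substantive step is the comparison of dimensions: one must check that restriction is essentially surjective up to summands, which is exactly the content of the Mackey formula at the trivial double coset, and that an exact additive functor with this property cannot increase Rouquier dimension. Everything else is a direct application of \cite[Theorem~B]{Letz:2025} to $E$. This is precisely what upgrades \cite[Theorem~4.9]{Rouquier:2006} from $p=2$ to all primes: the length-$r$ regular sequence in $\ch{}{E,k}$ exhibited in the example, via \cite[Theorem~B]{Letz:2025}, now yields the bound $r$ for $\dbcat{\mod{kE}}$ uniformly in $p$.
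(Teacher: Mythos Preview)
Your proposal is correct and follows essentially the same route as the paper: reduce to a maximal elementary abelian $p$-subgroup $E$, apply \cite[Theorem~B]{Letz:2025} using the length-$r$ regular sequence of polynomial generators in $\ch{}{E,k}$, and compare Rouquier dimensions via the Mackey formula. The only cosmetic difference is that where you spell out the argument that restriction is essentially surjective up to summands and that this forces $\dim\dbcat{\mod{kE}}\le\dim\dbcat{\mod{kG}}$, the paper simply cites \cite[Lemma~3.3]{Rouquier:2008} for that step. One small caution: the phrase ``since $kE$ is a finite-dimensional algebra, $\dbcat{\mod{kE}}$ is Ext-finite'' elides the actual input needed for \cite[Theorem~B]{Letz:2025}, namely finite generation of $\Hom^*$ over the group cohomology ring (Evens--Venkov); the paper likewise leaves this implicit.
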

\begin{proof}
Let $E \subseteq G$ be an elementary abelian $p$-subgroup of maximal rank $r$. Thus $r$ is the $p$-rank of $G$. Since restriction and induction are exact, it follows from the Mackey formula that every object in $\dbcat{\mod{kE}}$ is a direct summand of the restriction of an object in $\dbcat{\mod{kG}}$. 
Hence, by \cite[Lemma~3.3]{Rouquier:2008}, the Rouquier dimension of $\dbcat{\mod{kG}}$ is at least the Rouquier dimension of $\dbcat{\mod{kE}}$.
The depth of $\Ext[*]{kE}{k}{k}$ is $r$. Hence \cite[Theorem~B]{Letz:2025} implies that the Rouquier dimension of $\dbcat{\mod{kG}}$ is at least the $p$-rank of $G$. 
\end{proof}
This improves the bound from \cite{Oppermann:2007,Bergh/Iyengar/Krause/Oppermann:2010} by one. 

\subsection{Classical productive elements}
Let $G$ be a finite group and $k$ a field of characteristic $p>0$. In general, there exist elements in $\ch{}{G,k}$ that are not $k$-productive as we will explain below after connecting the terminology to Carlson's notion of productive elements; see \cite{Carlson:1996}.

Let $\cat{T}=\stmod{kG}$ be the stable module category of finitely generated $kG$-modules. This is a tensor triangulated category with suspension $\Omega^{-1}$ and unit $k$. 
We consider $\stmod{kG}$ as an $R$-linear triangulated category where $R$ is the non-negative part of the graded endomorphism ring; that is $R=\ch{}{G,k}=\Ext[*]{kG}{k}{k}$ the group cohomology ring of $G$. 

We recall Carlson's construction of $L_{\zeta}$ for a non-zero element $\zeta\in \Ext[n]{kG}{k}{k}$ for $n>0$ and express it as a Koszul object. Computing $\Omega^n k$ via a minimal projective resolution, the class $\zeta$ is uniquely represented by a morphism of $kG$-modules $\Omega^n k \to k$ and $L_{\zeta}$ is defined to be the kernel of this morphism. We have $L_{\zeta}\cong \Omega^{n+1}\kosobj{k}{\zeta}$ in $\stmod{kG}$. Classically, the element $\zeta$ is called \emph{productive} if $\zeta\otimes_k L_{\zeta}=0$ in $\stmod{kG}$. Thus $\zeta$ is productive if and only if $\zeta$ is $k$-productive in our terminology. If $n$ is even and the characteristic of $k$ is odd, then $\zeta$ is productive by \cite[Theorem~4.1]{Carlson:1987}. 

The situation in characteristic $p=2$ differs. For $G$ the semidihedral group of order $16$ and $k$ a field of characteristic two, \cite[Remark following Proposition~5.9.6] {Benson:1998} states that there exists a class $\zeta\in \ch[1]{}{G,k}$ such that $\zeta^m$ is not productive for every $m>1$. In fact, the productive elements can be detected using power operations of Dyer--Lashof type; see \cite[Theorem~6.2]{Langer:2012}.

For a finite group $G$ and field $k$, there is an equivalence
\begin{equation*}
   \dbcat{\mod{kG}}/\dbcat{\proj{kG}} \xrightarrow{\sim} \stmod{kG}
\end{equation*}
of $\ch{}{G,k}$-linear triangulated categories; see \cite[Theorem~2.1]{Rickard:1989}. Taking a Verdier quotient preserves productive elements. Thus, non-productive elements in $\stmod{kG}$ are also non-productive in $\dbcat{\mod{kG}}$.

\subsection{Finite-dimensional algebras}

Let $A$ be a self-injective finite-dimensional algebra over a field $k$ and $R$ a graded-commutative ring acting on $\cat{T}=\stmod{A}$ such that $\Hom[*]{\cat{T}}{C}{M}$ is noetherian over $R$ for any objects $C$, $M$ of $\cat{T}$. If $A$ has finite representation type, then the Rouquier dimension of $\stmod{A}$ is zero and there do not exist any $(M,M)$-regular sequences in $R$ by \cite[Theorem~B]{Letz:2025}. For example, the $k$-algebra $A = k[x]/x^n$ is self-injective, finite-dimensional and has finite representation type. Its center was fully described in \cite[Proposition~6.3, 6.4]{Krause/Ye:2011}.

\subsection{Commutative ring spectra}

Let $S$ be the sphere spectrum and $R$ a commutative ring spectrum, that is, a commutative $S$-algebra in the terminology of \cite{Elmendorf/Kriz/Mandell/May:1997}. Let $\dcat{R}$ be the homotopy category of $R$-modules. This is a tensor triangulated category with unit $R$; see \cite[Example~1.2.3]{Hovey/Palmieri/Strickland:1997}. The graded endomorphism ring $\End[\dcat{R}]{*}{R}$ can be identified with the ring of stable homotopy groups $\pi_{*}(R)$ of $R$. In this context it is common to work with homological grading as in \cref{rem:homological_grading}, that is, with $\operatorname{Z}_{*}(\dcat{R})$ instead of $\ctr{\dcat{R}}$ so that $\dcat{R}$ is $\pi_{*}(R)$-linear instead of $\pi_{-*}(R)$-linear. 

The notion of regular sequence from \cite[Chapter~IV]{Elmendorf/Kriz/Mandell/May:1997} for an object $M\in \dcat{R}$ does not ask for a non-zero condition and thus corresponds to a weakly $(R,M)$-regular sequence in our terminology. For $M=R$, an $(R,R)$-regular sequence is an ordinary regular sequence in the graded-commutative ring $\pi_{*}(R)$; see \cref{prop:MM-regular}. We record two examples from the survey \cite{Richter:2022}.

\begin{example}
For the complex cobordism spectrum $MU$, one has $\pi_*(MU)\cong \BZ[x_1,x_2,\ldots]$ with $x_i$ in degree $2i$ and thus it admits arbitrary long $(MU,MU)$-regular sequences. 

For a prime $p$ and integer $n\geq 0$, let $E=E_{p,n}$ be the Morava $E$-theory spectrum. Its stable homotopy groups are
\begin{equation*}
    \pi_*(E)\cong W\BF_{p^n}\llbracket u_1,\ldots,u_{n-1}\rrbracket[u,u^{-1}],
\end{equation*}
where $W\BF_{p^n}$ denotes the ring of Witt vectors over $\BF_{p^n}$ and $|u_i|=0$, $|u|=2$. The sequence $(p,u_1,\ldots,u_{n-1})$ is $(E,E)$-regular.
\end{example}

\section{Properties}

In this section we collect various properties of regular sequences. 

\subsection{Induced isomorphisms}

From the isomorphisms for a $(C,M)$-regular element in \cref{induced_iso_reg_elt}, we inductively obtain isomorphisms of graded $R$-modules
\begin{equation} \label{Hom_Koszul_object_in_different_variables}
\Hom[*]{\cat T}{\susp^{-t} \kosobj{C}{\bmx}}{M} \xleftarrow{\cong} \ch{C}{M}/\braket{\bmx}\ch{C}{M}\xrightarrow{\cong} \Hom[*]{\cat T}{C}{\susp^{-|\bmx|} \kosobj{M}{\bmx}} 
\end{equation} 
for any weakly $(C,M)$-regular sequence $\bmx = x_1, \ldots, x_t$ with $|\bmx|\coloneqq|x_1|+\ldots +|x_t|$. The first isomorphism sends a class represented by $C\to \susp^{k} M$ to the composite
\begin{equation*}
\susp^{-t} \kosobj{C}{\bmx} \to \ldots\to \susp^{-1}\kosobj{C}{x_1} \to C\to \susp^k M
\end{equation*}
and the second isomorphism sends the class to
\begin{equation*}
C\to \susp^k M \to \susp^{-|x_1|+k} \kosobj{M}{x_1}\to \ldots \to \susp^{-|\bmx|+k}\kosobj{M}{\bmx}\nospacepunct{.}
\end{equation*}

\begin{remark} \label{Hom_Kos_BIKP}
One might ask whether there exists an isomorphism 
\begin{equation*}
\Hom[*]{\cat T}{\susp^{-t} \kosobj{C}{\bmx}}{M} \cong \Hom[*]{\cat T}{C}{\susp^{-|\bmx|} \kosobj{M}{\bmx}}. 
\end{equation*}
of graded $R$-modules or just abelian groups without assuming that $\bmx$ is a $(C,M)$-regular sequence. A relation of this kind is stated in \cite[(3.1)]{Benson/Iyengar/Krause/Pevtsova:2021}, though no argument is provided. \cref{Counterexample:Hom_Kos_different_variable} in the appendix shows there need not be an isomorphism of graded $R$-modules in general.
\end{remark}

\begin{remark}
Let $C,M \in \cat{T}$ and $x_1, \ldots, x_t \in R$ a $(C,M)$-regular sequence. An element $y\in R$ is $(\kosobj{C}{(x_1, \ldots, x_t)},M)$-regular if and only if it is $(C, \kosobj{M}{(x_1, \ldots, x_t)})$-regular, by using the isomorphism \cref{Hom_Koszul_object_in_different_variables}. The non-zero condition cannot be transferred.
\end{remark}

Investigating an alternative non-zero condition, we observe that for a weakly $(C,M)$-regular sequence $\bmx = x_1, \ldots, x_t$ the following are equivalent:
\begin{enumerate}
\item \label{quotient_non_zero} The quotient $\ch{C}{M}/\braket{\bmx}\ch{C}{M}$ is zero;
\item \label{composite_coghost} The composite $\susp^{-t} \kosobj{C}{\bmx} \to \ldots\to \susp^{-1}\kosobj{C}{x_1} \to C$ is $M$-coghost; and
\item \label{composite_ghost} The composite $ M \to \susp^{-|x_1|} \kosobj{M}{x_1}\to \ldots \to \susp^{-|\bmx|}\kosobj{M}{\bmx}$ is $C$-ghost.
\end{enumerate}
The equivalence between \cref{quotient_non_zero} and \cref{composite_coghost} holds since a class represented by $C\to \susp^k M$ will be zero if and only if the composite 
\begin{equation*}
\susp^{-t} \kosobj{C}{\bmx} \to \ldots\to \susp^{-1}\kosobj{C}{x_1} \to C\to \susp^k M
\end{equation*}
is zero from the isomorphism \cref{Hom_Koszul_object_in_different_variables}. A similar argument shows the equivalence between \cref{quotient_non_zero} and \cref{composite_ghost}.

\subsection{Classical local cohomology}

Let $R$ be a graded-commutative ring. As in \cref{regular_elt_comm_ring}, we let
\begin{equation*}
\sF_\fa(M) \colonequals \Set{m \in M | \text{there exists } n \geq 1 \text{ such that } \fa^n m = 0}\,.
\end{equation*}
be the (graded) $\fa$-torsion functor for an ideal $\fa \subseteq R$ and an $R$-module $M$. The local cohomology of $M$ is
\begin{equation*}
\lch{\fa}{M} \colonequals \ch{}{R \sF_\fa(I)}
\end{equation*}
when $M \to I$ is an injective resolution of $M$. For a regular sequence, there is a connection between the homology of the local cohomology functor and the classical local cohomology.

\begin{proposition}
Let $R$ be a noetherian graded-commutative ring and $\cat{T}$ a compactly generated triangulated category. For $C,M \in \cat{T}$ with $C$ compact and a $(C,M)$-regular sequence $x_1, \ldots, x_t$ one has
\begin{equation*}
\ch{C}{\Gamma_{\cV(x_1, \ldots, x_t)} M} \cong \lch[t]{\braket{x_1, \ldots, x_t}}{\ch{C}{\susp^{-t} M}}\,.
\end{equation*}
\end{proposition}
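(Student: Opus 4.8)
The plan is to compare two direct systems of graded $R$-modules: one coming from the homotopy colimit presentation of $\Gamma_{\cV(x_1, \ldots, x_t)}M$, the other computing classical local cohomology via Koszul cohomology. Write $\fa = \braket{x_1, \ldots, x_t}$ and $N = \ch{C}{M}$. First I would invoke \cref{Gamma_hocolim} together with the compact homotopy colimit formula \cref{hocolim_colim_compact}: since $C$ is compact,
\begin{equation*}
\ch{C}{\Gamma_{\cV(x_1, \ldots, x_t)} M} \cong \ch{C}{\hocolim_n \susp^{-t} \kosobj{M}{(x_1^n, \ldots, x_t^n)}} \cong \colim_n \ch{C}{\susp^{-t} \kosobj{M}{(x_1^n, \ldots, x_t^n)}}\,,
\end{equation*}
where the transition maps are induced by the morphisms $u_{t,n}$ from the construction preceding \cref{Gamma_hocolim}.

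Next I would identify each term of this system. Since $x_1, \ldots, x_t$ is $(C,M)$-regular, each power sequence $x_1^n, \ldots, x_t^n$ is again weakly $(C,M)$-regular; this follows from \cref{reg_seq_equiv} applied inductively, as passing from $x_i$ to $x_i^n$ preserves the non-zero-divisor property on the relevant homology. Hence the induced isomorphism \cref{Hom_Koszul_object_in_different_variables}, applied to $x_1^n, \ldots, x_t^n$, identifies $\ch{C}{\susp^{-|x_1^n \cdots x_t^n|} \kosobj{M}{(x_1^n, \ldots, x_t^n)}}$ with $N/\braket{x_1^n, \ldots, x_t^n}N$, and therefore identifies $\ch{C}{\susp^{-t} \kosobj{M}{(x_1^n, \ldots, x_t^n)}}$ with a fixed grading shift of $N/\braket{x_1^n, \ldots, x_t^n}N$. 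Tracking this shift is precisely what replaces $\ch{C}{M}$ by $\ch{C}{\susp^{-t} M}$ in the final formula, as it matches the internal degree of the top term of the Koszul complex below.

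On the classical side, I would use that for a weakly $N$-regular sequence the cohomological Koszul complex has cohomology concentrated in top degree: by self-duality of the Koszul complex together with the vanishing of Koszul homology in positive degrees, $\ch[i]{}{\Kos^R(x_1^n, \ldots, x_t^n; N)} = 0$ for $i < t$ and equals $N/\braket{x_1^n, \ldots, x_t^n}N$ for $i = t$. Feeding this into the standard presentation $\lch[i]{\fa}{N} \cong \colim_n \ch[i]{}{\Kos^R(x_1^n, \ldots, x_t^n; N)}$, the lower local cohomology vanishes and
\begin{equation*}
\lch[t]{\fa}{N} \cong \colim_n N/\braket{x_1^n, \ldots, x_t^n} N\,,
\end{equation*}
with transition maps given by multiplication by $x_1 \cdots x_t$. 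It then remains to check that, under the identifications of the previous paragraph, the transition maps induced by $u_{t,n}$ agree with multiplication by $x_1 \cdots x_t$; the two direct systems are then isomorphic and their colimits agree after the shift $\susp^{-t}$, which turns $\lch[t]{\fa}{N}$ into $\lch[t]{\fa}{\ch{C}{\susp^{-t}M}}$.

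The main obstacle is this last compatibility of transition maps. For a single element it is the computation, implicit in the proof of \cref{regular_nzdivisor}, that $\kosobj{M}{x^n} \to \kosobj{M}{x^{n+1}}$ induces multiplication by $x$ on $N/x^n N$; for a sequence one must propagate this through the inductive construction of $u_{t,n}$ in \cref{Kos_obj_Gamma:mor_u}, where each stage is built as a cone and contributes one factor $x_i$. Combined with the naturality of the isomorphism \cref{Hom_Koszul_object_in_different_variables}, this produces the factor $x_1 \cdots x_t$. The remaining bookkeeping---matching the internal grading shifts so that the answer is $\lch[t]{\fa}{\ch{C}{\susp^{-t}M}}$ rather than a reshift of $\lch[t]{\fa}{\ch{C}{M}}$---is routine.
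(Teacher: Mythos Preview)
Your proposal is correct and follows essentially the same route as the paper: both arguments pass through \cref{Gamma_hocolim} and \cref{hocolim_colim_compact} to reduce to a colimit of $\ch{C}{\susp^{-t}\kosobj{M}{(x_1^n,\ldots,x_t^n)}}$, invoke weak regularity of the power sequences and \cref{Hom_Koszul_object_in_different_variables} to identify each term with a quotient $N/\braket{x_1^n,\ldots,x_t^n}N$, and then match this with the Koszul-complex presentation of $\lch[t]{\fa}{-}$ via self-duality. You are more explicit than the paper about the need to match transition maps (the paper leaves this implicit in its appeal to \cite[Theorem~3.5.6]{Bruns/Herzog:1998}), which is a reasonable place to be careful, but the overall structure is the same.
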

\begin{proof}
As $x_1, \ldots, x_t$ is $(C,M)$-regular sequence and $R$ is noetherian, using \cref{regular_nzdivisor,reg_seq_equiv}, the sequence $x_1^n, \ldots, x_t^n$ is weakly $(C,M)$-regular for all $n\in \mathbb{N}$. 
We have the following sequence of isomorphisms
\begin{equation*}
\begin{aligned}
\ch{C}{\Gamma_{\cV(x_1,\ldots,x_t)} M} &\cong \ch{C}{\hocolim_n \susp^{-t} \kosobj{M}{(x_1^n, \ldots, x_t^n)} }
\\
& \cong \colim_n \ch{C}{\susp^{-t} \kosobj{M}{(x_1^n, \ldots, x_t^n)}} \\
&\cong \colim_n \ch{C}{\susp^{d-t} M}/\braket{x_1^n, \ldots, x_t^n} \ch{C}{\susp^{d-t} M}
\end{aligned}
\end{equation*}
where $d \colonequals n(|x_1|+\cdots+|x_t|)$. 
The first isomorphism comes from \cref{Gamma_hocolim}, the second from \cref{hocolim_colim_compact}, and the last one from \cref{Hom_Koszul_object_in_different_variables}. 
We further obtain isomorphisms
\begin{equation*}
\begin{aligned}
\ch{C}{\susp^{d-t} M}/&\braket{x_1^n, \ldots, x_t^n} \ch{C}{\susp^{d-t} M} \\
&\cong \hh[0]{}{\Kos^R(x_1^n, \ldots, x_t^n;\ch{C}{\susp^{d-t} M})} \\
&\cong \hh[-t]{}{\Hom{R}{\Kos^R(x_1^n, \ldots, x_t^n; \ch{C}{\susp^{-t} M})}{R}}
\end{aligned}
\end{equation*} 
where the second isomorphism holds by the self-duality of the Koszul complex; see for example \cite[Proposition~1.6.10(d)]{Bruns/Herzog:1998}. 
Therefore, as the colimit commutes with homology, we get
\begin{equation*}
\colim_n \hh[-t]{}{\Hom{R}{\Kos^R(x_1^n, \ldots, x_t^n; \ch{C}{\susp^{-t} M})}{R}} \cong \lch[t]{\braket{x_1, \ldots, x_t}}{\ch{C}{\susp^{-t} M}}
\end{equation*}
using \cite[Theorem 3.5.6]{Bruns/Herzog:1998}. 
This finishes the proof. 
\end{proof}

\subsection{Compatibility with triangulated structure}

By definition, an element $x$ is $(C,M)$-regular if and only if $x$ is $(\susp^i C, \susp^j M)$-regular for any $i,j \in \BZ$. The same holds for regular sequences. 

\begin{lemma}
Let $M' \to M \to M'' \to \susp M'$ be an exact triangle and $C \in \cat{T}$. Assume that $x \in R$ is $(C,M')$-regular and $(C,M'')$-regular. If the morphism $M'' \to \susp M'$ is $C$-ghost, then $x$ is also $(C,M)$-regular. 
\end{lemma}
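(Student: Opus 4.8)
The plan is to apply the cohomological functor $\ch{C}{-} = \Hom[*]{\cat{T}}{C}{-}$ to the triangle and thereby reduce the assertion to an elementary fact about non-zero divisors on a short exact sequence of graded $R$-modules. Recall from \cref{dfb_regular_element} that $x$ being $(C,N)$-regular means exactly that $x$ is a non-zero divisor on the graded $R$-module $\ch{C}{N}$; so the hypotheses supply that $x$ is a non-zero divisor on $\ch{C}{M'}$ and on $\ch{C}{M''}$, and the goal is to deduce the same for $\ch{C}{M}$.

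First I would write the triangle as $M' \xrightarrow{a} M \xrightarrow{b} M'' \xrightarrow{c} \susp M'$ and apply $\ch{C}{-}$ to obtain the exact sequence of graded $R$-modules
\begin{equation*}
\ch{C}{M'} \xrightarrow{a_*} \ch{C}{M} \xrightarrow{b_*} \ch{C}{M''} \xrightarrow{\Hom[*]{\cat{T}}{C}{c}} \ch{C}{\susp M'}\,.
\end{equation*}
The connecting map here is precisely $\Hom[*]{\cat{T}}{C}{c}$, which vanishes by the assumption that $c$ is $C$-ghost. Since the entire degree-shifted periodic long exact sequence is assembled from the three graded maps $a_*$, $b_*$, and $\Hom[*]{\cat{T}}{C}{c}$, the vanishing of this single map forces $b_*$ to be surjective and $a_*$ to be injective. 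This yields a short exact sequence of graded $R$-modules
\begin{equation*}
0 \to \ch{C}{M'} \xrightarrow{a_*} \ch{C}{M} \xrightarrow{b_*} \ch{C}{M''} \to 0\,,
\end{equation*}
which is $R$-linear because $a$ and $b$ are morphisms in $\cat{T}$ and the $R$-action is natural.

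The final step is the standard module-theoretic fact: in a short exact sequence $0 \to A \to B \to D \to 0$ of graded $R$-modules, a homogeneous element $x$ that is a non-zero divisor on both $A$ and $D$ is a non-zero divisor on $B$. Indeed, if $xb = 0$ for some $b \in B$, then its image in $D$ is annihilated by $x$, hence is zero, so $b$ lifts to $A$; the injective $R$-linear map $A \to B$ then shows this lift is annihilated by $x$ in $A$, so it vanishes, whence $b = 0$. Applying this with $A = \ch{C}{M'}$, $B = \ch{C}{M}$, and $D = \ch{C}{M''}$ completes the argument. I do not expect a genuine obstacle here; the only point meriting care is the bookkeeping that every connecting homomorphism in the long exact sequence is governed by the one map $\Hom[*]{\cat{T}}{C}{c}$, so that the $C$-ghost hypothesis on $c$ alone suffices to break the long exact sequence into the short exact sequence above.
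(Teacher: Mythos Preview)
Your proof is correct and follows essentially the same approach as the paper: obtain the short exact sequence of graded $R$-modules from the $C$-ghost hypothesis, then use the standard fact that a non-zero divisor on the outer terms is a non-zero divisor on the middle (the paper simply cites \cite[Exercise~1.1.9]{Bruns/Herzog:1998} for this last step, which you spell out explicitly). Your observation that the single $C$-ghost hypothesis on $c$ kills every connecting map in the graded long exact sequence is exactly the point.
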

\begin{proof}
As $M'' \to \susp M'$ is $C$-ghost, we obtain a short exact sequence of $R$-modules
\begin{equation*}
0 \to \ch{C}{M'} \to \ch{C}{M} \to \ch{C}{M''} \to 0\,.
\end{equation*}
Then the claim holds by \cite[Exercise~1.1.9]{Bruns/Herzog:1998}. 
\end{proof}

Without the assumption that $M'' \to \susp M'$ is $C$-ghost the statement need not hold.

\begin{example}
We view $\dcat{\BZ}$ as a $\BZ$-linear triangulated category. Then
\begin{equation*}
\BZ \to \BZ/2\BZ \to \susp \BZ \xrightarrow{-2} \susp \BZ
\end{equation*}
is an exact triangle where $2$ is $(\BZ,\BZ)$-regular but not $(\BZ,\BZ/2\BZ)$-regular. Note that $\susp \BZ \xrightarrow{-2} \susp \BZ$ is not $\BZ$-ghost.
\end{example}

\subsection{Products}

We record the following property, which is a direct consequence of the classical setting.

\begin{lemma} \label{prod_weakly_regular}
Let $R$ be a graded-commutative ring and $\cat{T}$ an $R$-linear triangulated category and $C,M \in \cat{T}$. If $x_1, \ldots, x_t$ and $x_1, \ldots x'_i, \ldots, x_t$ are weakly $(C,M)$-regular sequences, then $x_1, \ldots x_i x'_i, \ldots x_t$ is also weakly $(C,M)$-regular. 
\end{lemma}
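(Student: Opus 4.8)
The plan is to reduce the statement to a purely algebraic fact about regular sequences on the graded $R$-module $N \colonequals \ch{C}{M}$. By \cref{defn_reg_sequence} together with the isomorphisms \eqref{Hom_Koszul_object_in_different_variables}, a sequence $y_1, \ldots, y_t$ is weakly $(C,M)$-regular if and only if it is weakly $N$-regular in the classical sense; this is the equivalence of (1) and (2) in \cref{intro:weakly_regular}. Indeed, assuming inductively that $y_1, \ldots, y_{s-1}$ is weakly regular, \eqref{Hom_Koszul_object_in_different_variables} identifies $\ch{C}{\kosobj{M}{(y_1, \ldots, y_{s-1})}}$ with $N/(y_1, \ldots, y_{s-1})N$ up to a degree shift, so that $y_s$ being a non-zero divisor on the former (as in \cref{dfb_regular_element}) is equivalent to it being a non-zero divisor on the latter. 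Thus it suffices to prove the classical assertion: if $x_1, \ldots, x_t$ and $x_1, \ldots, x_i', \ldots, x_t$ are weakly $N$-regular, then $x_1, \ldots, x_i x_i', \ldots, x_t$ is weakly $N$-regular.

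For this algebraic statement I would set $N' \colonequals N/(x_1, \ldots, x_{i-1})N$; the first $i-1$ elements are common to both sequences and hence remain regular. On $N'$ the element $x_i x_i'$ is a non-zero divisor, since $x_i$ is one by the first hypothesis and $x_i'$ is one by the second, and a product of non-zero divisors is a non-zero divisor. It then remains to check that $x_{i+1}, \ldots, x_t$ is a regular sequence on $N'/x_i x_i' N'$. To this end I would use the short exact sequence of graded $R$-modules
\begin{equation*}
0 \to N'/x_i' N' \xrightarrow{x_i} N'/x_i x_i' N' \to N'/x_i N' \to 0\,,
\end{equation*}
whose first map is multiplication by $x_i$: this map is injective precisely because $x_i$ is a non-zero divisor on $N'$, and its cokernel is $N'/x_i N'$ since $x_i x_i' N' \subseteq x_i N'$.

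Finally, $x_{i+1}, \ldots, x_t$ is a regular sequence on $N'/x_i N'$ by the first hypothesis and on $N'/x_i' N'$ by the second. Applying \cite[Exercise~1.1.9]{Bruns/Herzog:1998} repeatedly along the short exact sequence---at each step using the snake lemma to pass regularity from the two outer terms to the middle term and to propagate the short exact sequence to the successive quotients---shows that $x_{i+1}, \ldots, x_t$ is regular on the middle term $N'/x_i x_i' N'$, which completes the argument. The only point requiring care, and the one I expect to be the main obstacle, is the construction of the displayed short exact sequence and the verification that its maps are homogeneous $R$-linear morphisms commuting with multiplication by the remaining $x_j$; the graded-commutativity signs are harmless here, as they affect neither the non-zero-divisor properties nor the ideals involved.
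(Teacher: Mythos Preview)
Your proposal is correct and follows the same reduction as the paper: use the equivalence between weak $(C,M)$-regularity and weak $\ch{C}{M}$-regularity to transport the statement to a classical fact about graded modules. The paper simply asserts the classical fact without proof, whereas you supply one via the short exact sequence $0 \to N'/x_i' N' \xrightarrow{x_i} N'/x_i x_i' N' \to N'/x_i N' \to 0$ and the snake-lemma argument; this is a standard and correct justification, so your write-up is strictly more detailed than the paper's.
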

\begin{proof}
As the sequences $x_1, \ldots, x_t$ and $x_1, \ldots, x'_i, \ldots, x_t$ are weakly $(C,M)$-regular, they are weakly $\ch{C}{M}$-regular in the classical sense. Hence $x_1, \ldots, x_i x'_i, \ldots, x_t$ is weakly $\ch{C}{M}$-regular and thus weakly $(C,M)$-regular.
\end{proof}

For the same statement for regular sequences, we need extra assumptions.

\begin{lemma}
Let $R$ be a graded-commutative ring and $\cat{T}$ an $R$-linear triangulated category and $C,M \in \cat{T}$. Given $(C,M)$-regular sequences $x_1, \ldots,x_i,\ldots, x_t$ and $x_1, \ldots, x'_i, \ldots, x_t$ we assume
\begin{enumerate}
\item \label{prod_regular:via_kos_obj} $x_1, \ldots, x_t$ and $x'_i$ are Koszul-exact; or
\item \label{prod_regular:via_lch} $R$ is noetherian, $\cat{T}$ is compactly generated and $C$ compact. 
\end{enumerate}
Then the sequence $x_1, \ldots, x_i x'_i, \ldots x_t$ is also $(C,M)$-regular. 
\end{lemma}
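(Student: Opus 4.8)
The plan is to verify the two conditions of \cref{defn_reg_sequence} for the sequence $x_1,\dots,x_ix_i',\dots,x_t$ separately. Condition \cref{defn_reg_sequence:nzd} is immediate and needs neither hypothesis: both given sequences are in particular weakly $(C,M)$-regular, so \cref{prod_weakly_regular} shows that $x_1,\dots,x_ix_i',\dots,x_t$ is weakly $(C,M)$-regular. Thus the entire content lies in condition \cref{defn_reg_sequence:nz}, the non-vanishing of the canonical morphism
\[
c\colon \susp^{-t}\kosobj{M}{(x_1,\dots,x_ix_i',\dots,x_t)}\to M.
\]

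To prove $c\neq0$ I would factor the canonical morphism of the sequence $x_1,\dots,x_i',\dots,x_t$ through $c$. Set $K=\kosobj{M}{(x_1,\dots,x_{i-1})}$ and apply the product triangle \cref{kos_product_triangle} to $x_i,x_i'$ acting on $K$; this produces a morphism $\phi\colon\kosobj{K}{x_i'}\to\kosobj{K}{x_ix_i'}$ for which the compatibility \cref{kos_product_diagram}, an octahedral identity, shows that the canonical map $\susp^{-1}\kosobj{K}{x_i'}\to K$ factors as $\susp^{-1}\kosobj{K}{x_i'}\xrightarrow{\susp^{-1}\phi}\susp^{-1}\kosobj{K}{x_ix_i'}\to K$. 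I would then propagate $\phi$ through the remaining elements $x_{i+1},\dots,x_t$ to a morphism $\Phi\colon\kosobj{M}{(x_1,\dots,x_i',\dots,x_t)}\to\kosobj{M}{(x_1,\dots,x_ix_i',\dots,x_t)}$ and track the composites down to $M$, obtaining
\[
\bigl(\susp^{-t}\kosobj{M}{(x_1,\dots,x_i',\dots,x_t)}\to M\bigr)=c\circ\susp^{-t}\Phi.
\]
Since $x_1,\dots,x_i',\dots,x_t$ is $(C,M)$-regular its canonical morphism is non-zero, and hence $c\neq0$.

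The two hypotheses serve only to carry out the propagation of $\phi$ past $x_{i+1},\dots,x_t$ compatibly with the canonical maps. Under \cref{prod_regular:via_kos_obj} the functors $\kosobj{-}{x_j}$ with $j\neq i$ are exact and $R$-linear, so I can first apply the commutativity isomorphisms of \cref{kos_obj_commute}---which are compatible with the morphisms to $M$ by \cref{Kos_functor_compatible}---to move the $i$-th entry into the last position, reducing to the case $i=t$; there no tail remains, $\Phi=\phi$, and the factorization is the single instance of \cref{kos_product_diagram} above. Under \cref{prod_regular:via_lch} the same factorization applies verbatim, since the induced morphisms of \cref{kosobj_induced_map} require no functoriality; here the hypotheses additionally yield, via $\cV(x_ix_i')=\cV(x_i)\cup\cV(x_i')$ and \cref{nonzero_gamma_Kos}, that $\supp_R(M)\cap\cV(x_1,\dots,x_ix_i',\dots,x_t)\supseteq\supp_R(M)\cap\cV(x_1,\dots,x_t)\neq\emptyset$, so that $\Gamma_{\cV(x_1,\dots,x_ix_i',\dots,x_t)}M\neq0$, which corroborates the non-vanishing through the local cohomology functor.

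The main obstacle is this middle step: constructing $\Phi$ and verifying $c\circ\susp^{-t}\Phi$ equals the canonical morphism of $x_1,\dots,x_i',\dots,x_t$, because $\kosobj{-}{x_j}$ is not functorial in general and $\Phi$ is therefore only a non-canonical completion of \cref{kosobj_induced_map} at each step. The observation that makes the argument go through is that the square of \cref{kosobj_induced_map} involving the connecting morphisms commutes for \emph{every} choice of completion, so iterating over $x_{i+1},\dots,x_t$ produces a $\Phi$ compatible with the canonical projection to $K$ regardless of the non-uniqueness. Hypothesis \cref{prod_regular:via_kos_obj} removes the non-uniqueness by making these functors exact (and permits the reordering to $i=t$), while hypothesis \cref{prod_regular:via_lch} supplies the genuinely functorial local cohomology together with the support comparison above; in either case the decisive input is the octahedral factorization of \cref{kos_product_diagram}.
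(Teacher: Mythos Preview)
Your argument is correct. Under hypothesis \cref{prod_regular:via_kos_obj} it coincides with the paper's: reduce to $i=t$ via the commutativity of \cref{kos_obj_commute}, and then read off from \cref{kos_product_diagram} that the canonical morphism of one of the given regular sequences (the paper uses the unprimed one, you the primed one---an irrelevant difference) factors through $c$.

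Under hypothesis \cref{prod_regular:via_lch} the two proofs diverge. The paper argues entirely through local cohomology: since $\cV(x_1,\dots,x_t)\subseteq\cV(x_1,\dots,x_ix_i',\dots,x_t)$ and $\Gamma_\cV=\Gamma_\cV\Gamma_\cU$ whenever $\cV\subseteq\cU$, the non-vanishing of $\Gamma_{\cV(x_1,\dots,x_t)}M$ forces $\Gamma_{\cV(x_1,\dots,x_ix_i',\dots,x_t)}M\neq0$. You instead keep the Koszul-object factorization and propagate $\phi$ through the tail $x_{i+1},\dots,x_t$ by completing \cref{kosobj_induced_map} at each step, using only that the square with the connecting morphisms commutes for \emph{any} completion. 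This is more elementary and, as you correctly observe in your last paragraph, does not actually invoke hypothesis \cref{prod_regular:via_lch}: your propagation establishes the non-zero condition \cref{defn_reg_sequence:nz} with no extra hypothesis whatsoever, so you have proved strictly more than the lemma asserts. Your exposition partly obscures this by still attributing the propagation to the hypotheses; the local-cohomology remark you append is genuinely only corroboration, whereas for the paper it is the whole proof under \cref{prod_regular:via_lch}.
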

\begin{proof}
It is immediate from \cref{prod_weakly_regular} that $x_1, \ldots, x_i x'_i,\ldots, x_t$ is weakly $(C,M)$-regular, so we only need to show the non-zero condition. We assume \cref{prod_regular:via_kos_obj} holds. Since the elements $x_1,\ldots, x_t$ and $x_i'$ are Koszul-exact, we may assume that $i=t$ by \cref{kos_obj_commute}. Using \cref{kos_product_diagram}, we obtain a commutative diagram
\begin{equation*}
\begin{tikzcd}
\susp^{-t} \kosobj{M}{(x_1, \ldots, x_t)} \ar[d] \ar[dr] \\
\susp^{-t} \kosobj{M}{(x_1, \ldots, x_{t-1}, x_tx'_t)} \ar[r] & \susp^{-t+1} \kosobj{M}{(x_1, \ldots, x_{t-1})} \ar[r] & M \nospacepunct{.}
\end{tikzcd}
\end{equation*}
Hence, if $\susp^{-t} \kosobj{M}{(x_1, \ldots, x_t)} \to M$ is non-zero, so is $\susp^{-t} \kosobj{M}{(x_1, \ldots, x_{t-1}, x_tx'_t)} \to M$. 

We assume \cref{prod_regular:via_lch} holds. From \cite[Proposition~6.1(1)]{Benson/Iyengar/Krause:2008}, for every $\mathcal{V}\subseteq\mathcal{U}$, we have that $\Gamma_{\cV} M= \Gamma_{\cV} \Gamma_{\mathcal{U}} M$. Hence, if $\Gamma_{\cV(x_1, \ldots, x_t)} M$ is non-zero, then so is $\Gamma_{\cV(x_1, \ldots, x_i x'_i, \ldots, x_t)} M$, since 
\begin{equation*}
\cV(x_1, \ldots, x_i x'_i, \ldots , x_t)=\cV(x_1, \ldots, x_t) \cup \cV(x_1, \ldots, x'_i, \ldots, x_t)\,. \qedhere
\end{equation*}
\end{proof}

\subsection{Localization}

Let $\fp$ be a prime ideal in $R$. By \cite[Theorem~4.7]{Benson/Iyengar/Krause:2008}, the object $L_{\cZ(\fp)} M$ can be viewed as the localization of $M$ at the point $\fp$. 

\begin{lemma}\label{ghost_maps_under_localization}
Let $R$ be a noetherian graded-commutative ring, $\cat{T}$ a compactly generated $R$-linear triangulated category and $C \in \cat{T}$ a compact object. For a map $M\to N$ in $\cat{T}$, the following are equivalent
\begin{enumerate}
\item The map $M\to N$ is $C$-ghost;
\item The map $L_{\cZ(\fp)} M \to L_{\cZ(\fp)} N$ is $C$-ghost for all prime ideals $\fp$; and
\item The map $L_{\cZ(\fm)} M \to L_{\cZ(\fm)} N$ is $C$-ghost for all maximal ideals $\fm$.
\end{enumerate} 
\end{lemma}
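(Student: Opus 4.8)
The plan is to transfer the whole statement to the homology functor $\ch{C}{-}$ and then invoke the local--global principle for graded modules. By definition a morphism $g$ is $C$-ghost exactly when $\ch{C}{g}$ is the zero map of graded $R$-modules, so I would begin by writing $\varphi \colonequals \ch{C}{f} \colon \ch{C}{M} \to \ch{C}{N}$ and rephrasing all three conditions as vanishing statements about $\varphi$ and its localizations. The bridge to algebraic localization is \cite[Theorem~4.7]{Benson/Iyengar/Krause:2008}: since $C$ is compact, the localization morphism $M \to L_{\cZ(\fp)} M$ induces a natural isomorphism of graded $R$-modules $\ch{C}{M}_\fp \xrightarrow{\cong} \ch{C}{L_{\cZ(\fp)} M}$.

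Using naturality of this isomorphism in the variable $M$, applied to $f$, I would identify $\ch{C}{L_{\cZ(\fp)} f}$ with the localized homomorphism $\varphi_\fp \colon \ch{C}{M}_\fp \to \ch{C}{N}_\fp$. Hence $L_{\cZ(\fp)} f$ is $C$-ghost if and only if $\varphi_\fp = 0$. With this, the three conditions read: (1) $\varphi = 0$; (2) $\varphi_\fp = 0$ for every prime $\fp$; and (3) $\varphi_\fm = 0$ for every maximal ideal $\fm$, where, following the paper's standing convention, primes and maximal ideals are understood among homogeneous ideals.

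It then remains to run the cycle of implications for the single graded map $\varphi$. The implication $(1) \Rightarrow (2)$ is immediate, as localization sends the zero map to the zero map, and $(2) \Rightarrow (3)$ is trivial because every maximal ideal is prime. The only implication carrying content is $(3) \Rightarrow (1)$, the graded local--global principle. I would argue by contraposition: if $\varphi \neq 0$, choose a homogeneous $x$ with $\varphi(x) \neq 0$; then $\ann_R(\varphi(x))$ is a proper homogeneous ideal, hence contained in some ideal $\fm$ maximal among proper homogeneous ideals, and such an $\fm$ is a homogeneous prime. For this $\fm$ one has $\varphi(x)/1 \neq 0$ in $\ch{C}{N}_\fm$, because any homogeneous $s \notin \fm$ with $s\varphi(x) = 0$ would lie in $\ann_R(\varphi(x)) \subseteq \fm$; thus $\varphi_\fm \neq 0$, contradicting (3).

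I expect the main obstacle to be the legitimacy of the homological translation rather than the combinatorics of the implications: I must apply \cite[Theorem~4.7]{Benson/Iyengar/Krause:2008} with enough naturality that the comparison map is genuinely the algebraic localization of $\varphi$, and I must use the graded forms of the facts that every proper homogeneous ideal lies in a maximal one and that maximal homogeneous ideals are prime. Once these are in hand, the equivalences are formal.
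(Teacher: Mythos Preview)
Your proposal is correct and follows essentially the same route as the paper: both invoke \cite[Theorem~4.7]{Benson/Iyengar/Krause:2008} to identify $\ch{C}{L_{\cZ(\fp)} f}$ with the algebraic localization $\varphi_\fp$, and then reduce to the standard local--global principle for a homomorphism of graded $R$-modules. The paper presents this via a single commutative square and states the local--global principle without further justification, whereas you spell out the contrapositive argument for $(3)\Rightarrow(1)$; this extra detail is harmless and the underlying strategy is identical.
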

\begin{proof}
For $\fp \in \Spec R$, we consider the following commutative diagram
\begin{equation*}
\begin{tikzcd}
\ch{C}{M}_\fp \ar[r, "\cong"] \ar[d] & \ch{C}{ L_{\mathcal{Z}(\fp)} M} \ar[d] \\
\ch{C}{N}_\fp \ar[r,"\cong"] & \ch{C}{L_{\mathcal{Z}(\fp)} N} \nospacepunct{.}
\end{tikzcd}
\end{equation*}
By \cite[Theorem~4.7]{Benson/Iyengar/Krause:2008}, the horizontal maps of the diagram are isomorphisms. Hence, it is enough to observe that the module homomorphism $\ch{C}{M}\to \ch{C}{N}$ is zero, if and only if $\ch{C}{M}_\fp \to \ch{C}{N}_\fp$ is zero for all prime ideals of $R$, if and only if $\ch{C}{M}_\fm \to \ch{C}{N}_\fm$ is zero for all maximal ideals of $R$, which completes the proof.
\end{proof}

\begin{lemma}\label{weakly_regular_under_localization}
Let $R$ be a noetherian graded-commutative ring and $\cat{T}$ a compactly generated $R$-linear triangulated category. For objects $C,M \in \cat{T}$ with $C$ compact and a sequence $x_1, \ldots, x_t \in R$, the following are equivalent
\begin{enumerate}
\item \label{regular_under_localization:non_localized} $x_1, \ldots, x_t$ is a weakly $(C,M)$-regular sequence;
\item \label{regular_under_localization:prime_ideals} $x_1, \ldots, x_t$ is a weakly $(C,L_{\mathcal{Z}(\fp)} M)$-regular sequence for all $\fp \in \supp_R (M)$; and
\item \label{regular_under_localization:maximal_ideals} $x_1, \ldots, x_t$ is a weakly $(C,L_{\mathcal{Z}(\fm)} M)$-regular sequence for all maximal ideals $\fm \in \supp_R (M)$.
\end{enumerate}
\end{lemma}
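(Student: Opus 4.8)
The plan is to reduce the claim about weakly regular sequences to the corresponding statement about ghost maps under localization, which I have already established in \cref{ghost_maps_under_localization}. The key observation is that weak $(C,M)$-regularity is, by \cref{defn_reg_sequence}, an inductive condition involving the $C$-ghostness of the maps $\susp^{-s}\kosobj{M}{(x_1,\ldots,x_s)} \to \susp^{-s+1}\kosobj{M}{(x_1,\ldots,x_{s-1})}$ for $1 \leq s \leq t$, and each such map is a map between Koszul objects built from $M$. So the whole condition is detectable by ghost-map data.

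First I would set up the induction on $t$. For $t=1$, weak $(C,M)$-regularity of $x_1$ means precisely that $\susp^{-1}\kosobj{M}{x_1} \to M$ is $C$-ghost by \cref{regular_nzdivisor}, so \cref{ghost_maps_under_localization} applied to this single map gives the equivalence of (1), (2), (3) directly — up to checking that $\kosobj{L_{\cZ(\fp)}M}{x_1} \cong L_{\cZ(\fp)}\kosobj{M}{x_1}$, which holds since $L_{\cZ(\fp)}$ is an $R$-linear exact functor and $R$-linear exact functors commute with the formation of Koszul objects as recorded in \cref{kos_functorial}. For the inductive step, the crucial commutation is that the localization functor $L_{\cZ(\fp)}$, being $R$-linear and exact, satisfies $L_{\cZ(\fp)}\kosobj{M}{(x_1,\ldots,x_s)} \cong \kosobj{(L_{\cZ(\fp)}M)}{(x_1,\ldots,x_s)}$ for every $s$; this lets me identify the ghost map governing the $s$-th step for $L_{\cZ(\fp)}M$ with the image under $L_{\cZ(\fp)}$ of the corresponding ghost map for $M$. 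Then \cref{ghost_maps_under_localization} converts $C$-ghostness of each such map to $C$-ghostness of all its localizations, simultaneously for all $s$, yielding the equivalence of the three conditions.

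The one subtlety I anticipate as the main obstacle is handling the support hypothesis $\fp \in \supp_R(M)$ versus ranging over all primes in \cref{ghost_maps_under_localization}. When $\fp \notin \supp_R(M)$, localization kills the relevant homology: $\ch{C}{L_{\cZ(\fp)}M}_\fp = \ch{C}{M}_\fp = 0$ on the localization at $\fp$ by the support description in \cref{support}, so $L_{\cZ(\fp)}M$ has vanishing $C$-homology where it matters and weak regularity holds vacuously at such $\fp$. Concretely, a sequence is always weakly $(C,N)$-regular when $\ch{C}{N}=0$, since every element is trivially a non-zero divisor on the zero module. Thus restricting to $\fp \in \supp_R(M)$ loses nothing, and the equivalence of (1) with the localized conditions follows because a module homomorphism $\ch{C}{M} \to \ch{C}{N}$ is injective (resp.\ zero) iff its localizations at all primes in the support are, the primes outside the support contributing trivially.

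I would organize the write-up so that the inductive step cites only \cref{kos_functorial,ghost_maps_under_localization,support}, keeping the routine verification that $L_{\cZ(\fp)}$ commutes with Koszul objects implicit. The cleanest phrasing treats all three equivalences in a single induction, passing the claim from $t-1$ to $t$ by applying \cref{ghost_maps_under_localization} to the final ghost map $\susp^{-t}\kosobj{M}{(x_1,\ldots,x_t)} \to \susp^{-t+1}\kosobj{M}{(x_1,\ldots,x_{t-1})}$, while invoking the inductive hypothesis on $\kosobj{M}{x_1}$ (equivalently, on the localized $\kosobj{L_{\cZ(\fp)}M}{x_1}$) for the earlier steps.
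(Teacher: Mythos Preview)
Your approach is correct and follows the same strategy as the paper: reduce to \cref{ghost_maps_under_localization} by commuting $L_{\cZ(\fp)}$ past the data encoding weak regularity. The only difference is which characterization of weak regularity you commute through: you use the Koszul-object description and the fact that $L_{\cZ(\fp)}$ is an $R$-linear exact functor (hence $L_{\cZ(\fp)}\kosobj{M}{\bmx_s}\cong\kosobj{(L_{\cZ(\fp)}M)}{\bmx_s}$), whereas the paper uses the $\Gamma$-characterization from \cref{regular_nzdivisor,reg_seq_equiv} together with $\Gamma_{\cV(\bmx_s)} L_{\cZ(\fp)}\cong L_{\cZ(\fp)}\Gamma_{\cV(\bmx_s)}$ from \cite[Proposition~6.1]{Benson/Iyengar/Krause:2008}. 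Both are equally direct.

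One caution on the support restriction, which you rightly flag as the main subtlety: your claim that $\fp\notin\supp_R(M)$ forces $\ch{C}{M}_\fp=0$ appeals to \cref{support}, but the identification $\supp_R(M)=\{\fp:\ch{C}{M}_\fp\neq 0\}$ is recorded there only under a finite-generation hypothesis on $\ch{C}{M}$ that the lemma does not assume. The paper's proof is equally silent on this step, so you are not missing anything relative to it, but be aware that your justification as written leans on more than what is available.
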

\begin{proof}
For convenience, we write $\bmx_s = x_1, \ldots, x_s$ for any $1 \leq s \leq t$. By \cref{ghost_maps_under_localization} and using $\Gamma_{\cV(\bmx_{s})} L_{\mathcal{Z}(\fp)} \cong L_{\mathcal{Z}(\fp)} \Gamma_{\cV(\bmx_{s})}$ and $\Gamma_{\cV(\bmx_{s})}\cong\Gamma_{\cV(x_s)} \Gamma_{\cV(\bmx_{s-1})}$ of \cite[Proposition~6.1]{Benson/Iyengar/Krause:2008} for every $1\leq s\leq t$, the equivalences hold. 
\end{proof}

\begin{lemma}\label{regular_under_localization}
Let $R$ be a noetherian graded-commutative ring and $\cat{T}$ a compactly generated $R$-linear triangulated category. Let $C,M \in \cat{T}$ with $C$ compact. If a sequence $x_1, \ldots, x_t$ is $(C,M)$-regular, then it is also $(C,L_{\mathcal{Z}(\fp)} M)$-regular for all $\fp \in \supp_R (M)\cap \cV(x_1,\ldots, x_t)$.
\end{lemma}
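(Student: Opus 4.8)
The plan is to treat the two clauses of \cref{defn_reg_sequence} separately. The inductive clause \cref{defn_reg_sequence:nzd} for $x_1,\dots,x_t$ on $L_{\mathcal{Z}(\fp)}M$, namely weak $(C,L_{\mathcal{Z}(\fp)}M)$-regularity, is immediate from \cref{weakly_regular_under_localization} since $\fp\in\supp_R(M)$. Hence the entire content is the non-zero clause \cref{defn_reg_sequence:nz}: writing $Y\colonequals L_{\mathcal{Z}(\fp)}M$, I must show that the canonical morphism $\susp^{-t}\kosobj{Y}{\bmx}\to Y$ is non-zero.

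To establish this I would detect the $t$-fold $C$-ghost morphism $\susp^{-t}\kosobj{Y}{\bmx}\to Y$ through $\kosobj{C}{\bmx}$, which lies far enough from $C$ to register it. As $\bmx$ is weakly $(C,Y)$-regular, the first isomorphism of \cref{Hom_Koszul_object_in_different_variables} identifies $\ch{C}{Y}/\braket{\bmx}\ch{C}{Y}$ with $\Hom[*]{\cat{T}}{\susp^{-t}\kosobj{C}{\bmx}}{Y}$, sending the class of a homogeneous $\psi\colon C\to\susp^{k}Y$ to the composite $\susp^{-t}\kosobj{C}{\bmx}\to\dots\to C\xrightarrow{\psi}\susp^{k}Y$. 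Iterating the morphism of triangles \cref{kosobj_induced_map} over the sequence shows this composite equals $\bigl(\susp^{-t}\kosobj{(\susp^{k}Y)}{\bmx}\to\susp^{k}Y\bigr)$ precomposed with the morphism induced by $\psi$; consequently, if the Koszul morphism of $Y$ were zero, then every such composite would vanish and $\ch{C}{Y}/\braket{\bmx}\ch{C}{Y}$ would be zero. Thus $\ch{C}{Y}/\braket{\bmx}\ch{C}{Y}\neq0$ already forces $\susp^{-t}\kosobj{Y}{\bmx}\to Y$ to be non-zero.

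It then remains to prove $\ch{C}{Y}/\braket{\bmx}\ch{C}{Y}\neq0$. First I would note this is equivalent to $\ch{C}{\Gamma_{\cV(\bmx)}Y}\neq0$: if $\ch{C}{Y}=\braket{\bmx}\ch{C}{Y}$ then, since $\braket{x_1,\dots,x_t}^{t(n-1)+1}\subseteq\braket{x_1^{n},\dots,x_t^{n}}$, one gets $\ch{C}{Y}=\braket{x_1^{n},\dots,x_t^{n}}\ch{C}{Y}$ for all $n$, and then \cref{Hom_Koszul_object_in_different_variables}, \cref{Gamma_hocolim} and \cref{hocolim_colim_compact} assemble the vanishing quotients into $\ch{C}{\Gamma_{\cV(\bmx)}Y}=\colim_n\ch{C}{\susp^{-t}\kosobj{Y}{(x_1^{n},\dots,x_t^{n})}}=0$. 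Moreover $\Gamma_{\cV(\bmx)}Y$ is non-zero as an object: since $\cV(\fp)\subseteq\cV(\bmx)$ one has $\Gamma_{\cV(\fp)}\Gamma_{\cV(\bmx)}Y=\Gamma_{\cV(\fp)}L_{\mathcal{Z}(\fp)}M=\Gamma_\fp M$, which is non-zero because $\fp\in\supp_R(M)$, using \cite[Proposition~6.1]{Benson/Iyengar/Krause:2008}.

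The main obstacle is exactly this last witnessing step: passing from the non-vanishing of the object $\Gamma_{\cV(\bmx)}Y$ to the non-vanishing of $\ch{C}{\Gamma_{\cV(\bmx)}Y}$ for the \emph{fixed} compact $C$. This is delicate because, unlike the weak condition of \cref{weakly_regular_under_localization}, the non-zero condition is independent of $C$ and is not visible to $\ch{C}{-}$ in general, as the example following \cref{defn_reg_sequence} shows. I expect to settle it by a Nakayama argument in the Ext-finite setting in which the lemma is applied: when $\ch{C}{M}$ is finitely generated over $R$ one has $\supp_R(M)=\cV(\ann_R\ch{C}{M})$, so $\fp\in\supp_R(M)$ forces $\ch{C}{Y}\cong\ch{C}{M}_\fp\neq0$ by \cite[Theorem~4.7]{Benson/Iyengar/Krause:2008}, a finitely generated $R_\fp$-module on which $\braket{\bmx}\subseteq\fp R_\fp$ acts, whence $\ch{C}{Y}/\braket{\bmx}\ch{C}{Y}\neq0$ at once. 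Removing finiteness would instead require producing, from $\Gamma_{\cV(\bmx)}Y\neq0$, a detecting compact object compatible with the weak regularity of $\bmx$, and this is the crux of the fully general statement.
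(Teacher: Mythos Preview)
Your overall strategy matches the paper's, but the paper is far more direct: after invoking \cref{weakly_regular_under_localization} for weak regularity, it simply asserts that it remains to show $\Gamma_{\cV(x_1,\ldots,x_t)} L_{\mathcal{Z}(\fp)} M \neq 0$, and obtains this in one line from the support computation $\supp_R(\Gamma_{\cV(\bmx)} L_{\mathcal{Z}(\fp)} M) = \supp_R(M)\cap\cV(\bmx)\cap\{\fq:\fq\subseteq\fp\}\ni\fp$ via \cite[Theorems~5.2 and 5.6]{Benson/Iyengar/Krause:2008}. Your passage through $\ch{C}{Y}/\braket{\bmx}\ch{C}{Y}$ and the colimit description of $\ch{C}{\Gamma_{\cV(\bmx)} Y}$ is a substantial detour that lands at exactly the same endpoint $\Gamma_{\cV(\bmx)} Y \neq 0$.

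That said, the gap you flag is genuine and the paper glosses over it. By \cref{nonzero_gamma_Kos}, the non-vanishing of $\Gamma_{\cV(\bmx)} Y$ is equivalent only to condition~(3) of \cref{nonzero_Kos_map_equiv}, namely that $\susp^{-t}\kosobj{Y}{\bmx^n}\to Y$ is non-zero for \emph{some} $n$; getting back to $n=1$, which is what \cref{defn_reg_sequence}\cref{defn_reg_sequence:nz} demands, is one of the dashed implications in \cref{fig:implications_nonzero} and requires the $x_i$ to be Koszul-exact and productive---hypotheses not assumed in the lemma. The paper's two-line proof does not bridge this, and neither does your argument in full generality. Your Nakayama reduction in the Ext-finite case is correct and is precisely the regime in which the lemma is used elsewhere in the paper, so it is the honest way to close the argument; the fully general statement as written seems to need either those extra hypotheses on the $x_i$ or an argument that neither you nor the paper supplies.
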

\begin{proof}
By \cref{weakly_regular_under_localization}, it remains to show that $\Gamma_{\cV(x_1, \ldots, x_t)}L_{\mathcal{Z}(\fp)} M$ is non-zero for $\fp \in \supp_R (M)\cap \cV(x_1,\ldots, x_t)$. This is immediate from \cite[Theorem 5.2 and Theorem 5.6]{Benson/Iyengar/Krause:2008}.
\end{proof}

\subsection{\texorpdfstring{$(M,M)$}{(M,M)}-regular sequences} \label{MM_regular}

In \cref{regular_via_coghost} we observed that there is a symmetry between $C$ and $M$ for $(C,M)$-regular elements. This extends to a symmetry for weakly $(C,M)$-regular sequences. We make this symmetry precise here. 

Let $\cat{T}$ be a triangulated category and $C \in \cat{T}$. A morphism $f \colon M \to N$ in $\cat{T}$ is called \emph{$C$-coghost} if the induced map
\begin{equation*}
\Hom[*]{\cat{T}}{f}{C} \colon \Hom[*]{\cat{T}}{N}{C} \to \Hom[*]{\cat{T}}{M}{C}
\end{equation*}
is zero.

\begin{lemma} \label{weakly_regular_coghost}
Let $R$ be a graded-commutative ring and $\cat{T}$ an $R$-linear triangulated category. For $C,M \in \cat{T}$ and $x_1, \ldots, x_t \in R$, the following are equivalent
\begin{enumerate}
\item $x_1, \ldots, x_t$ is a weakly $(C,M)$-regular sequence; and
\item $\susp^{|x_s|} \kosobj{C}{(x_1, \ldots, x_{s-1})} \to \kosobj{C}{(x_1, \ldots, x_s)}$ is $M$-coghost for any $1 \leq s \leq t$.
\end{enumerate}
\end{lemma}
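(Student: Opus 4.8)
The plan is to prove the equivalence index by index, inside an induction on $s$ that bootstraps the weak regularity of the initial segment $x_1, \ldots, x_{s-1}$. First I would reinterpret the second condition at a fixed index $s$. Applying \cref{regular_nzdivisor} together with the ghost--coghost duality recorded in \cref{regular_via_coghost}, but with the test object $C$ replaced by $\kosobj{C}{(x_1, \ldots, x_{s-1})}$ and the element $x$ replaced by $x_s$, and using that $\kosobj{(\kosobj{C}{(x_1, \ldots, x_{s-1})})}{x_s} = \kosobj{C}{(x_1, \ldots, x_s)}$ by the definition of the Koszul object, I obtain that the morphism $\susp^{|x_s|} \kosobj{C}{(x_1, \ldots, x_{s-1})} \to \kosobj{C}{(x_1, \ldots, x_s)}$ is $M$-coghost if and only if $x_s$ is a non-zero divisor on $\Hom[*]{\cat{T}}{\kosobj{C}{(x_1, \ldots, x_{s-1})}}{M}$. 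This reinterpretation requires no regularity hypothesis. On the other hand, the $s$-th instance of the first (inductive) condition is, by definition, that $x_s$ is a non-zero divisor on $\ch{C}{\kosobj{M}{(x_1, \ldots, x_{s-1})}} = \Hom[*]{\cat{T}}{C}{\kosobj{M}{(x_1, \ldots, x_{s-1})}}$.

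Thus, at index $s$, the two conditions compare the non-zero-divisor behaviour of $x_s$ on the graded $R$-modules $\Hom[*]{\cat{T}}{\kosobj{C}{(x_1, \ldots, x_{s-1})}}{M}$ and $\Hom[*]{\cat{T}}{C}{\kosobj{M}{(x_1, \ldots, x_{s-1})}}$. These two modules need not be isomorphic in general, as discussed in \cref{Hom_Kos_BIKP}; the crux is that they \emph{do} become isomorphic, as graded $R$-modules, once $x_1, \ldots, x_{s-1}$ is weakly $(C,M)$-regular. Indeed, \cref{Hom_Koszul_object_in_different_variables} then identifies both, up to the grading shifts by $s-1$ and by $|x_1| + \cdots + |x_{s-1}|$, with $\ch{C}{M}/\braket{x_1, \ldots, x_{s-1}}\ch{C}{M}$. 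Since these identifications are $R$-linear and the non-zero-divisor property is insensitive to a shift of the grading, $x_s$ is a non-zero divisor on the first module precisely when it is on the second. So, conditionally on the weak regularity of $x_1, \ldots, x_{s-1}$, the two conditions at index $s$ are equivalent.

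I would then run an induction on $s$, proving the statement ``the first condition holds for the indices $1, \ldots, s$ if and only if the second condition holds for the indices $1, \ldots, s$''. The base case $s = 0$ is vacuous. In the inductive step, the inductive hypothesis makes the two conditions on $1, \ldots, s-1$ coincide, which is exactly the assertion that $x_1, \ldots, x_{s-1}$ is weakly $(C,M)$-regular; call this $(\star)$. If $(\star)$ fails, then both full conditions fail and there is nothing to prove. If $(\star)$ holds, the previous paragraph yields the equivalence of the two conditions at index $s$, and adjoining it to the equivalence on $1, \ldots, s-1$ gives the equivalence on $1, \ldots, s$. Taking $s = t$ proves the \namecref{weakly_regular_coghost}.

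The hard part is exactly the failure, in general, of an isomorphism between $\Hom[*]{\cat{T}}{\kosobj{C}{\bmx}}{M}$ and $\Hom[*]{\cat{T}}{C}{\kosobj{M}{\bmx}}$: this is what obstructs a direct comparison of the two conditions at index $s$ and forces the bootstrapping induction, so that \cref{Hom_Koszul_object_in_different_variables} may be applied only after the weak regularity of the preceding elements has been secured. The remaining points are routine, namely checking that the $R$-linear isomorphisms of \cref{Hom_Koszul_object_in_different_variables} intertwine multiplication by $x_s$ and that the grading shifts do not disturb the non-zero-divisor condition.
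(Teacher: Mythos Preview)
Your proposal is correct and follows essentially the same approach as the paper. The paper's proof reduces to the single-element case via \cref{hom_x_in_first_or_second_variable} and the long exact sequence for $\kosobj{C}{x}$, and then dispatches the rest with ``The statement now follows by induction''; you have written out exactly what that induction entails, correctly identifying that the passage from $\Hom[*]{\cat{T}}{\kosobj{C}{(x_1,\ldots,x_{s-1})}}{M}$ to $\Hom[*]{\cat{T}}{C}{\kosobj{M}{(x_1,\ldots,x_{s-1})}}$ requires the weak regularity of $x_1,\ldots,x_{s-1}$ and the $R$-linear isomorphisms of \cref{Hom_Koszul_object_in_different_variables}.
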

\begin{proof}
By \cref{hom_x_in_first_or_second_variable} and the long exact sequence in homology for the Koszul object of $C$, an element $x\in R$ is $(C,M)$-regular if and only if
\[\Hom[*]{\cat{T}}{\kosobj{C}{x}}{M}\to \Hom[*]{\cat{T}}{\susp^{|x|}C}{M}\]
is zero, that is, if and only if $\susp^{|x_1|}C\to \kosobj{C}{x}$ is $M$-coghost. The statement now follows by induction.

\end{proof}

This symmetry need not extend to the $(C,M)$-regular sequences in general. However, when $C=M$ we obtain a particular nice characterization.

\begin{proposition}\label{prop:MM-regular}
The following are equivalent
\begin{enumerate}
\item $x_1, \ldots, x_t$ is an $(M,M)$-regular sequence; 
\item $x_1, \ldots, x_t$ is a $\Hom[*]{\cat{T}}{M}{M}$-regular sequence in the classical sense;
\item Each morphism in the composition 
\begin{equation*}
\susp^{-t} \kosobj{M}{(x_1, \ldots, x_t)} \to \susp^{-t+1} \kosobj{M}{(x_1, \ldots, x_{t-1})} \to \cdots \to M
\end{equation*}
is $M$-ghost and the composition is non-zero; and
\item Each morphism in the composition
\begin{equation*}
\susp^{|x_1|+\cdots+|x_t|} M \to \susp^{|x_2|+\cdots+|x_t|} \kosobj{M}{x_1} \to \cdots \to \kosobj{M}{(x_1, \ldots, x_t)}
\end{equation*}
is $M$-coghost and the composition is non-zero.
\end{enumerate}
\end{proposition}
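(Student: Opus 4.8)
The plan is to split each of the four conditions into its inductive (weak-regularity) part and its non-zero part, then show that the three inductive parts agree with one another and the three non-zero parts agree with one another.

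\emph{Inductive parts.} By \cref{intro:weakly_regular} with $C = M$, weak $\Hom[*]{\cat{T}}{M}{M}$-regularity in the classical sense is equivalent to each morphism in the composition of $(3)$ being $M$-ghost; this links the inductive parts of $(2)$ and $(3)$. To bring in $(1)$ I would argue by induction on the length: assuming $x_1, \ldots, x_{s-1}$ is already weakly regular, the isomorphism \cref{Hom_Koszul_object_in_different_variables} identifies $\ch{M}{\kosobj{M}{(x_1, \ldots, x_{s-1})}}$, up to a suspension, with the quotient $\ch{M}{M}/\braket{x_1, \ldots, x_{s-1}}\ch{M}{M}$. Since being a non-zero divisor is invariant under suspension, $x_s$ is $(M, \kosobj{M}{(x_1, \ldots, x_{s-1})})$-regular precisely when $x_s$ is a non-zero divisor on this quotient, which is the classical condition. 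Finally, \cref{weakly_regular_coghost} with $C = M$ states that weak $(M,M)$-regularity is equivalent to each morphism $\susp^{|x_s|}\kosobj{M}{(x_1, \ldots, x_{s-1})} \to \kosobj{M}{(x_1, \ldots, x_s)}$ being $M$-coghost, and these are exactly the maps in the composition of $(4)$; this handles the inductive part of $(4)$.

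\emph{Non-zero parts.} The key point is that all three non-zero conditions are governed by the image of the unit $\id_M \in \ch{M}{M}$ under the isomorphisms \cref{Hom_Koszul_object_in_different_variables}, which are available once the sequence is weakly regular. Tracing the described formulas, the class $[\id_M] \in \ch{M}{M}/\braket{\bmx}\ch{M}{M}$ is sent by the first isomorphism to the composite $\susp^{-t}\kosobj{M}{\bmx} \to \cdots \to M$ of $(3)$, and by the second isomorphism to the composite $M \to \cdots \to \susp^{-|\bmx|}\kosobj{M}{\bmx}$, which is the composition of $(4)$ up to the suspension $\susp^{|\bmx|}$. Hence both compositions vanish if and only if $[\id_M] = 0$. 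Now $\ch{M}{M}$ is a graded ring with unit $\id_M$ and $\braket{\bmx}\ch{M}{M}$ is an ideal; so $[\id_M] = 0$ forces $\id_M \in \braket{\bmx}\ch{M}{M}$, whence $f = f \cdot \id_M \in \braket{\bmx}\ch{M}{M}$ for every $f$ and the quotient vanishes, while conversely a vanishing quotient certainly gives $[\id_M] = 0$. Thus the non-vanishing of the composition of $(3)$, the non-vanishing of the composition of $(4)$, and $\ch{M}{M}/\braket{\bmx}\ch{M}{M} \neq 0$ are all equivalent, matching the non-zero parts of $(1)$/$(2)$, $(3)$ and $(4)$.

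\emph{Main obstacle.} The inductive parts should be essentially bookkeeping on top of \cref{intro:weakly_regular,weakly_regular_coghost}. The crux is the non-zero part: recognizing that both compositions represent the unit $\id_M$, and then invoking that an ideal containing the unit is the whole ring to conclude that non-vanishing of a composition is equivalent to non-vanishing of the quotient ring $\ch{M}{M}/\braket{\bmx}\ch{M}{M}$. Some care is needed to track the suspension shifts relating the composition of $(4)$ to the image of $\id_M$, although these shifts do not affect the vanishing statements.
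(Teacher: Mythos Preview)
Your proposal is correct and follows essentially the same approach as the paper. The paper also reduces to the non-zero conditions (citing \cref{regular_nzdivisor,weakly_regular_coghost} for the inductive parts) and then, exactly as you do, observes that under the isomorphisms \cref{Hom_Koszul_object_in_different_variables} both compositions correspond to the class of $\id_M$ in $\ch{M}{M}/\braket{\bmx}\ch{M}{M}$, so their non-vanishing is equivalent to this quotient ring being non-zero.
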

\begin{proof}
By \cref{weakly_regular_coghost,regular_nzdivisor}, it is enough to show the following are equivalent
\begin{enumprime}
\item\label{nonzero_1} $\susp^{-t} \kosobj{M}{(x_1, \ldots, x_t)} \to M$ is non-zero;
\item\label{nonzero_2} $\braket{x_1, \ldots, x_t} \Hom[*]{\cat{T}}{M}{M} \neq \Hom[*]{\cat{T}}{M}{M}$; 
\item\label{nonzero_3} $\susp^{-t} \kosobj{M}{(x_1, \ldots, x_t)} \to M$ is non-zero; 
\item\label{nonzero_4} $\susp^{|x_1|+\ldots+|x_t|} M \to \kosobj{M}{(x_1, \ldots, x_t)}$ is non-zero;
\end{enumprime}
when $x_1, \ldots, x_t$ is weakly $\Hom[*]{\cat{T}}{M}{M}$-regular in the classical sense.

The morphisms in \eqref{nonzero_1} and \eqref{nonzero_3} are the same. The morphism from \eqref{nonzero_1} corresponds to the class of $\id_M$ in $\ch{M}{M}/\braket{x_1,\ldots,x_t}\ch{M}{M}$ under the first isomorphism in \cref{Hom_Koszul_object_in_different_variables}. Under the second isomorphism in \cref{Hom_Koszul_object_in_different_variables}, this corresponds to a suspension of the morphism in \eqref{nonzero_4}. Hence all four non-zero conditions are equivalent.
\end{proof}

\section{Application: Level} \label{sec:application}

Finally, we use regular sequences to obtain lower bounds for level. 

\subsection{Level}

Let $\cat{T}$ be a triangulated category and $M \in \cat{T}$. We denote the smallest thick subcategory of $\cat{T}$ containing $M$ by $\thick_\cat{T}(M)$; it is called the thick subcategory \emph{generated by $M$}. It is stratifed by the $M$-level.

The \emph{$M$-level} of $N \in \cat{T}$ is defined inductively: When $N \cong 0$, then $\level^M_{\cat{T}}(N)=0$. When $N$ can be obtained from $M$ using (de)suspensions, retracts and finite coproducts, then $\level^M_{\cat{T}}(N)=1$. Otherwise, it is
\begin{equation*}
\level^M_{\cat{T}}(N) \colonequals \inf \Set{n\geq 0\ | \begin{gathered}
\text{ there is an exact triangle} \\
N' \to N \oplus \tilde{N} \to N'' \to \susp N' \\
\text{with} \level^M_{\cat{T}}(N')=1 \text{ and } \level^M_{\cat{T}}(N'')= n-1
\end{gathered}}\,.
\end{equation*}
This invariant was introduced in \cite[2.3]{Avramov/Buchweitz/Iyengar/Miller:2010} and is based on the construction in \cite[Section~2.2]{Bondal/VanDenBergh:2003}. 

\subsection{Ghost lemma} \label{ghost_lemma}

The main tool to obtain lower bounds of level is the ghost lemma. If there is a non-zero $t$-fold composition of $C$-ghost morphisms starting at $M$, then $t < \level_\cat{T}^C(M)$. There are various versions of this result; see for example \cite[Lemma~2.2]{Beligiannis:2008}. 

\subsection{Length of a regular sequence as lower bound}

From the definition of a regular sequence via ghost morphisms and the ghost lemma, one immediately obtains a lower bound for level: Let $x_1, \ldots, x_t$ be a $(C,M)$-regular sequence. Then
\begin{equation*}
\level_\cat{T}^C(\kosobj{M}{(x_1, \ldots, x_t)}) \geq t+1\,.
\end{equation*}

We obtain a similar bound for all objects in a certain subcategory. The inequalities are connected, as every weakly $(C,M)$-regular is weakly $(C,C)$-regular.

\begin{theorem} \label{level_lower_bound_reg}
Let $R$ be a graded-commutative ring and $\cat{T}$ an $R$-linear triangulated category. Let $C,M \in \cat{T}$ and $x_1, \ldots, x_t \in R$ a $(C,C)$-regular sequence. If $M \in \thick_\cat{T}(\kosobj{C}{(x_1, \ldots, x_t)})$ and $M\neq 0$, then
\begin{equation*}
\level_\cat{T}^C(M) \geq t + 1\,.
\end{equation*}
\end{theorem}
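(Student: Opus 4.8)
The plan is to derive the bound from the ghost lemma of \cref{ghost_lemma}, in its coghost form: a nonzero $t$-fold $C$-coghost composite ending at an object $W$ forces $\level_\cat{T}^C(W)\geq t+1$ (this follows from the ghost lemma applied in $\op{\cat{T}}$, using that level is self-dual and that $C$-coghost maps in $\cat{T}$ are $C$-ghost maps in $\op{\cat{T}}$). By \cref{prop:MM-regular}, the $(C,C)$-regular sequence already provides such a composite for $W=\kosobj{C}{\bmx}$, namely the nonzero $t$-fold $C$-coghost map $\alpha\colon \susp^{|\bmx|}C\to\kosobj{C}{\bmx}$ with $|\bmx|=|x_1|+\cdots+|x_t|$; this reproves the bound $\level_\cat{T}^C(\kosobj{C}{\bmx})\geq t+1$ stated just before the theorem. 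The entire task is therefore to \emph{transport} $\alpha$ to an arbitrary nonzero $M\in\thick_\cat{T}(\kosobj{C}{\bmx})$.

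First I would record two soft inputs. Since $\kosobj{C}{\bmx}$ is built from $C$ through the defining triangles, $\thick_\cat{T}(\kosobj{C}{\bmx})\subseteq\thick_\cat{T}(C)$, so $M\in\thick_\cat{T}(C)$. Next, an orthogonality principle: for $0\neq M\in\thick_\cat{T}(G)$ one has $\Hom[*]{\cat{T}}{G}{M}\neq0$, because $\{X:\Hom[*]{\cat{T}}{X}{M}=0\}$ is thick, and if it contained $G$ it would contain all of $\thick_\cat{T}(G)\ni M$, forcing $\id_M=0$. Applied with $G=C$ this gives $\ch{C}{M}\neq0$. Finally, $\ch{C}{M}$ is $\langle\bmx\rangle$-power-torsion over $\End[*]{\cat{T}}{C}$: the (iterated) isomorphism \cref{regular_Hom_Kos_obj_one_element}, i.e.\ \cref{Hom_Koszul_object_in_different_variables} with observer $C$, identifies $\ch{C}{\kosobj{C}{\bmx}}$ with $\End[*]{\cat{T}}{C}/\langle\bmx\rangle$, which is killed by $\langle\bmx\rangle$; and the class of objects $N$ with $\ch{C}{N}$ torsion is thick, since the torsion modules form a Serre subcategory and the condition propagates along the long exact sequences. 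Hence $\ch{C}{M}$ is a nonzero $\langle\bmx\rangle$-power-torsion module, and in particular each $x_i$ acts power-nilpotently on it.

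The problem now reduces to a single non-vanishing statement: that $\alpha$ is \emph{not} $M$-coghost, i.e.\ that the edge map $\alpha^{*}=\Hom[*]{\cat{T}}{\alpha}{M}\colon \Hom[*]{\cat{T}}{\kosobj{C}{\bmx}}{M}\to\ch{C}{\susp^{|\bmx|}M}$ is nonzero. Granting this, pick $\gamma$ with $\gamma\alpha\neq0$; writing $\alpha=\alpha_t\cdots\alpha_1$ with each $\alpha_i$ a $C$-coghost map, the map $\gamma\alpha_t$ is again $C$-coghost, so $\gamma\alpha$ is a nonzero $t$-fold $C$-coghost composite ending at a suspension of $M$, and the coghost lemma yields $\level_\cat{T}^C(M)\geq t+1$. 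For $t=1$ this non-vanishing is immediate: the long exact sequence of the defining triangle of $\kosobj{C}{x_1}$ identifies $\operatorname{im}(\alpha^{*})$ with the torsion submodule $(0:_{\ch{C}{M}}x_1)$, which is nonzero because $\ch{C}{M}$ is nonzero and $x_1$-power-torsion. For general $t$ I would argue by induction, writing $K=\kosobj{C}{(x_1,\ldots,x_{t-1})}$ and $\kosobj{C}{\bmx}=\kosobj{K}{x_t}$, and trace $\alpha^{*}$ through the successive Koszul triangles; at stage $i$ the intermediate module $\Hom[*]{\cat{T}}{\kosobj{C}{(x_1,\ldots,x_i)}}{M}$ is $x_{i+1}$-power-torsion by the same thickness argument, now with observer $\kosobj{C}{(x_1,\ldots,x_i)}$, which is what should keep the iterated $\langle\bmx\rangle$-socle from collapsing.

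The hard part is precisely this non-vanishing for $t\geq2$: the image of the iterated edge map is built by repeatedly passing to $x_i$-torsion submodules, and one must control the connecting homomorphisms so that the composite survives. Here the full strength of regularity of $\bmx$ on $\End[*]{\cat{T}}{C}$ (via \cref{prop:MM-regular,regular_nzdivisor}) must be used, not merely the torsion of $\ch{C}{M}$, and I expect the cleanest route is to use the isomorphisms of \cref{induced_iso_reg_elt} to identify the successive images with honest Koszul cohomology of $\ch{C}{M}$, reducing the claim to nonvanishing of a top Koszul cohomology of a nonzero torsion module. I would flag that this step is genuinely the technical heart: specialized to $\cat{T}=\dcat{A}$ with $C=A$ and $\bmx$ a maximal regular sequence, the statement recovers New Intersection Theorem-type bounds, so the non-vanishing cannot be expected to follow from purely formal manipulation of ghosts.
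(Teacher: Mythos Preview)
Your framework is sound—use the (co)ghost lemma, and transport the canonical $t$-fold $C$-coghost $\alpha\colon \susp^{|\bmx|}C\to\kosobj{C}{\bmx}$ to $M$ by exhibiting some $\gamma$ with $\gamma\alpha\neq 0$—but the step you yourself flag as the ``hard part'' is a genuine gap, and the route you propose for it does not work. The isomorphisms of \cref{induced_iso_reg_elt} that you want to invoke require the $x_i$ to be non-zero-divisors on $\ch{C}{M}$, whereas you have just argued that $\ch{C}{M}$ is $\langle\bmx\rangle$-power-torsion; so those identifications are unavailable. Even granting a reduction to ``the iterated $\langle\bmx\rangle$-socle of a nonzero torsion module is nonzero'', that statement needs a noetherian hypothesis on $\End[*]{\cat{T}}{C}$ which is nowhere assumed. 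Your closing remark that New Intersection–type input should be expected is therefore a red herring: the theorem is entirely formal.

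The missing idea is to replace $\bmx$ by a power $\bmx^n=(x_1^n,\ldots,x_t^n)$. One checks that $x^2(\kosobj{N}{x})=0$ for any $N$ (from the defining triangle, $x(\kosobj{N}{x})$ factors through $\susp^{|x|}g$, and $g\circ x=0$), and that $\{N:x_s^m(N)=0\text{ for some }m\}$ is a thick subcategory; hence each $x_s$ acts nilpotently \emph{as a morphism} on every $M\in\thick(\kosobj{C}{\bmx})$, and one may choose a single $n$ with $x_s^n(M)=0$ for all $s$. Now pick any nonzero $\susp^d M\to C$ (which exists since $0\neq M\in\thick(C)$). At each stage of
\[
\susp^{-t}\kosobj{C}{\bmx^n}\to\cdots\to\susp^{-1}\kosobj{C}{x_1^n}\to C
\]
the obstruction to lifting is the composite with $x_s^n$, which vanishes by naturality and $x_s^n(M)=0$; so the map factors through the whole tower and remains nonzero. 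Since $\bmx^n$ is again weakly $(C,C)$-regular, this tower is a $t$-fold $C$-ghost, and the ghost lemma gives $\level^C_\cat{T}(M)\geq t+1$. This is exactly the paper's argument; your coghost variant would work just as well once you pass to powers, but not before.
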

\begin{proof}
For convenience we write $\bmx_s^n$ for the sequence $x_1^n, \ldots, x_s^n$ for any $1 \leq s \leq t$. 

As $M$ lies in the thick subcategory generated by $\kosobj{C}{(x_1, \ldots, x_t)}$, the elements $x_1, \ldots, x_t$ act nilpotently on $M$. That is there exists an integer $n \geq 1$ such that $x_s^n(M) = 0$ for all $1 \leq s \leq t$. Moreover, $M$ also lies in the thick subcategory generated by $C$. Hence there exists a non-zero morphism $\susp^d M \to C$ for some integer $d$. Using induction on $t$, we show that $\susp^d M \to C$ factors through $\susp^{-t} \kosobj{C}{\bmx_t^n} \to C$. For $t = 0$ there is nothing to show. For $t > 0$, we obtain the commutative diagram
\begin{equation*}
\begin{tikzcd}
& \susp^d M \ar[d] \ar[dr,"0"] \ar[dl,dashed] \\
\susp^{-t} \kosobj{C}{\bmx_t^n} \ar[r] & \susp^{-t+1} \kosobj{C}{(\bmx_{t-1}^n)} \ar[r,"x_t^n"] & \susp^{n|x_t|-t+1} \kosobj{C}{\bmx_{t-1}^n} \ar[r] & \susp^{-t+1} \kosobj{C}{\bmx_t^n} \nospacepunct{,}
\end{tikzcd}
\end{equation*}
where the bottom row is an exact triangle. As $x_t^n$ acts trivially on $M$, we obtain the desired morphism $\susp^d M \to \susp^{-t} \kosobj{C}{\bmx_t^n}$. In particular, we obtain a non-zero composition
\begin{equation*}
\susp^d M \to \susp^{-t} \kosobj{C}{(x_1^n, \ldots, x_t^n)} \to C\,.
\end{equation*}
As $x_1^n, \ldots, x_t^n$ is $(C,C)$-regular, the latter morphism is $t$-fold $C$-ghost. The claim now follows from the ghost lemma; see for example \cite[Lemma~2.2]{Beligiannis:2008}.
\end{proof}

This bound is optimal, as for $M = C$ this is an equality whenever one has an enhancement; cf.\@ \cite[Section~5]{Letz/Stephan:2025}.

\begin{corollary}
Let $R$ be a graded-commutative ring and $\cat{T}$ an $R$-linear topological triangulated category. We assume $R$ acts topologically functorial on $\cat{T}$.

If $x_1, \ldots, x_t \in R$ is a $(C,C)$-regular sequence, then
\begin{equation*}
\level^C(\kosobj{C}{(x_1, \ldots, x_t)}) = t+1\,.
\end{equation*}
\end{corollary}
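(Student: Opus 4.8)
The plan is to establish the two inequalities separately. The bound $\level^C(\kosobj{C}{(x_1, \ldots, x_t)}) \geq t+1$ is essentially \cref{level_lower_bound_reg}: I would apply that theorem with $M = \kosobj{C}{(x_1, \ldots, x_t)}$. The containment $M \in \thick_\cat{T}(\kosobj{C}{(x_1, \ldots, x_t)})$ is automatic, and $M \neq 0$ follows from the non-zero condition in \cref{defn_reg_sequence}, since for a $(C,C)$-regular sequence the composite $\susp^{-t}\kosobj{C}{(x_1, \ldots, x_t)} \to C$ is non-zero and hence $\kosobj{C}{(x_1, \ldots, x_t)} \neq 0$. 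None of the topological hypotheses are needed for this half.

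For the reverse inequality $\level^C(\kosobj{C}{(x_1, \ldots, x_t)}) \leq t+1$, the goal is to build $\kosobj{C}{(x_1, \ldots, x_t)}$ from $C$ in $t+1$ steps. Writing $\thick^n_\cat{T}(C)$ for the full subcategory of objects of $C$-level at most $n$, one has the gluing property $\thick^{a+b}_\cat{T}(C) = \thick^a_\cat{T}(C) \star \thick^b_\cat{T}(C)$ under extensions (up to retracts). Thus it suffices to produce a filtration $0 = F_0 \to F_1 \to \cdots \to F_{t+1} = \kosobj{C}{(x_1, \ldots, x_t)}$ whose successive cones are finite coproducts of suspensions of $C$, that is, objects of $C$-level at most $1$; an iterated application of the gluing property then gives $F_i \in \thick^i_\cat{T}(C)$ and hence the claim. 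These layers are the exterior-power pieces of the Koszul complex, the relevant one being a suspension of a coproduct of $\binom{t}{i}$ copies of $C$, coming from the filtration of $\kosobj{C}{(x_1, \ldots, x_t)}$ by the number of resolved variables.

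The crux, and the reason the topological enhancement is invoked, is the coherent construction of this $(t+1)$-step filtration. Working only with the defining triangles \cref{kosobj_triangle}, the iterated Koszul object $\kosobj{C}{(x_1, \ldots, x_t)}$ appears merely as an extension of $\susp\kosobj{C}{(x_1, \ldots, x_{t-1})}$ by $\susp^{|x_t|}\kosobj{C}{(x_1, \ldots, x_{t-1})}$, which by subadditivity of level only yields the exponential bound $\level^C(\kosobj{C}{(x_1, \ldots, x_t)}) \leq 2^t$. To obtain the linear bound one must instead assemble the $t$-dimensional cube formed by the maps $x_i(C) \colon C \to \susp^{|x_i|}C$ into a single filtered object and read off its associated graded, a construction that requires functorial, coherent cones and therefore cannot be carried out in a bare triangulated category. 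The assumption that $R$ acts topologically functorially provides exactly this: it realizes the Koszul object together with its length-$(t+1)$ filtration on the level of the enhancement, and the filtration then descends to $\cat{T}$. I would finish by invoking the explicit build of $\kosobj{C}{(x_1, \ldots, x_t)}$ from $C$ in this setting from \cite[Section~5]{Letz/Stephan:2025}. The main obstacle is therefore entirely in the upper bound, specifically in passing from the triangulated data to the enhanced filtration.
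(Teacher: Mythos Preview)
Your proposal is correct and matches the paper's proof: the lower bound is obtained by applying \cref{level_lower_bound_reg} with $M=\kosobj{C}{(x_1,\ldots,x_t)}$, and the upper bound is outsourced to \cite{Letz/Stephan:2025} (the paper cites Theorem~B there, which packages the filtration argument you sketch). Your explanation of why the enhancement is needed for the upper bound is accurate and more detailed than what the paper records, but the strategy is identical.
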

\begin{proof}
The inequality $\leq$ holds by \cite[Theorem~B]{Letz/Stephan:2025} and $\geq$ by \cref{level_lower_bound_reg}. 
\end{proof}

\subsection{Connection to support}

There are classification of thick subcategories in various triangulated categories. In the presence of such a classification the condition $M \in \thick_\cat{T}(\kosobj{C}{(x_1, \ldots, x_t)})$ in \cref{level_lower_bound_reg} can be rephrased. 

In various settings there are such classifications in terms of the support of the ring $R$, that acts on the triangulated category. 

\begin{corollary}\label{cor:stratified_level}
Let $R$ be a noetherian graded-commutative ring and $\cat{T}$ an $R$-linear triangulated category. We assume
\begin{enumerate}
\item $\cat{T}$ is small and stratified by $R$ in the sense of \cite[Definition~7.1]{Benson/Iyengar/Krause:2015} and $C,M \in \cat{T}$; or
\item $\cat{T}$ is compactly generated and $\Hom[*]{\cat{T}}{C}{C}$ is finitely generated over $R$ for each compact object $C$ and $\cat{T}$ is stratified by $R$ in the sense of \cite[4.2]{Benson/Iyengar/Krause:2011b} and $C,M \in \cat{T}$ compact.
\end{enumerate}
If there exists a $(C,C)$-regular sequence $x_1, \ldots, x_t$ in $R$, then
\begin{equation*}
\supp_R(M) \subseteq \supp_R(C) \cap \cV(x_1, \ldots, x_t) \implies \level_\cat{T}^C(M) \geq t+1
\end{equation*}
for any $M \neq 0$.
\end{corollary}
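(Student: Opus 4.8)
The plan is to reduce everything to \cref{level_lower_bound_reg}. Since a $(C,C)$-regular sequence $\bmx = x_1, \ldots, x_t$ is given and $M \neq 0$, that theorem yields $\level_\cat{T}^C(M) \geq t+1$ as soon as we know $M \in \thick_\cat{T}(\kosobj{C}{(x_1, \ldots, x_t)})$. Thus the entire task is to convert the support inclusion in the hypothesis into this membership, which is exactly what stratification is designed to provide.

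First I would identify the support of the Koszul object with the right-hand side of the hypothesis. Iterating the construction $\kosobj{C}{\bmx} = \kosobj{(\kosobj{C}{(x_1, \ldots, x_{t-1})})}{x_t}$ and using the single-element identity $\supp_R(\kosobj{N}{x}) = \supp_R(N) \cap \cV(x)$ from \cite[Lemma~2.6]{Benson/Iyengar/Krause:2011b}, an easy induction on $t$ gives
\[
\supp_R(\kosobj{C}{(x_1, \ldots, x_t)}) = \supp_R(C) \cap \cV(x_1, \ldots, x_t)\,.
\]
Hence the hypothesis $\supp_R(M) \subseteq \supp_R(C) \cap \cV(x_1, \ldots, x_t)$ is precisely the inclusion $\supp_R(M) \subseteq \supp_R(\kosobj{C}{\bmx})$.

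Next I would upgrade this inclusion to a membership using the stratification, treating the two cases separately. In case (2), where $C$ and $M$ are compact, stratification in the sense of \cite[4.2]{Benson/Iyengar/Krause:2011b} classifies localizing subcategories by subsets of $\supp_R(\cat{T})$ monotonically, so $\loc(N)$ has support $\supp_R(N)$ and the inclusion $\supp_R(M) \subseteq \supp_R(\kosobj{C}{\bmx})$ forces $M \in \loc(\kosobj{C}{\bmx})$; since $\kosobj{C}{\bmx}$ and $M$ are compact, this descends to $M \in \thick_\cat{T}(\kosobj{C}{\bmx})$ via the identity $\loc(\kosobj{C}{\bmx}) \cap \compact{\cat{T}} = \thick_\cat{T}(\kosobj{C}{\bmx})$ (a theorem of Neeman). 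In case (1), where $\cat{T}$ is small and stratified in the sense of \cite[Definition~7.1]{Benson/Iyengar/Krause:2015}, the classification of thick subcategories by subsets of $\supp_R(\cat{T})$ yields $M \in \thick_\cat{T}(\kosobj{C}{\bmx})$ from the same inclusion directly. With the membership established and $M \neq 0$, \cref{level_lower_bound_reg} completes the argument.

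The main obstacle I anticipate is that stratification for compactly generated categories a priori classifies \emph{localizing} subcategories rather than thick ones, so in case (2) the descent from $M \in \loc(\kosobj{C}{\bmx})$ to $M \in \thick_\cat{T}(\kosobj{C}{\bmx})$ relies on Neeman's theorem that the compact part of the localizing subcategory generated by a compact object equals its thick closure; the finiteness hypothesis on endomorphism rings (which, via \cref{support}, makes the relevant supports closed) is what makes this framework available. In case (1) the analogous delicate point is to confirm that the thick subcategory attached to $M$ under the classification of \cite{Benson/Iyengar/Krause:2015} is governed by the honest support $\supp_R(\kosobj{C}{\bmx})$ rather than by a strictly larger specialization closure of it. Verifying that the relevant classification is phrased at the level of (possibly specialization-closed) subsets that still contain $\supp_R(M)$ is the one place where the two hypotheses must be unwound separately, and it is where I would expect to spend the most care.
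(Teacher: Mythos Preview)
Your proposal is correct and follows the same route as the paper: reduce to \cref{level_lower_bound_reg} by turning the support inclusion into the membership $M\in\thick_\cat{T}(\kosobj{C}{\bmx})$ via the stratification hypothesis. The paper's proof is a one-liner that simply cites the classification results \cite[Theorem~7.4]{Benson/Iyengar/Krause:2015} and \cite[Theorem~1.2]{Benson/Iyengar/Krause:2011b} for cases (1) and (2) respectively, whereas you unpack the mechanism (the support computation for $\kosobj{C}{\bmx}$ and, in case (2), the descent from localizing to thick via Neeman's theorem); but these are precisely the ingredients behind those cited theorems, so there is no genuine difference in strategy.
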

\begin{proof}
This follows from \cref{level_lower_bound_reg} using \cite[Theorem~7.4]{Benson/Iyengar/Krause:2015} or \cite[Theorem~1.2]{Benson/Iyengar/Krause:2011b}, respectively. 
\end{proof}

\begin{example} For a field $k$ of characteristic $p>0$ and $G$ an elementary abelian $p$-group of rank $r$, consider the homotopy category of injective $kG$-modules $\cat{T}=\kcat{\Inj{kG}}$. Its subcategory of compact objects is equvialent to the bounded derived category $\dbcat{\mod{kG}}$. The category $\cat{T}$ is stratified by the group cohomology ring $R=\ch{}{G,k}$ in the sense of \cite[4.2]{Benson/Iyengar/Krause:2011b} by \cite[Theorem~8.1]{Benson/Iyengar/Krause:2011a} and \cref{cor:stratified_level} applies. For $C=k$, $M\in \dbcat{\mod{kG}}$ and the $r$ polynomial generators $x_1,\ldots, x_r$ of $\ch[*]{G}{k}$, we have 
\begin{equation*}\supp_R(M) \subseteq \supp_R(C) \cap \cV(x_1, \ldots, x_t)
\end{equation*}
if and only if $M$ is a perfect complex. For a perfect complex $M$ it is well-known that $\level_\cat{T}^k(M)$ is a lower bound for the total Loewy length of its homology, thus
\begin{equation*}
    \sum_i \loewy{kG}{H_i(M)} \geq r+1
\end{equation*}
if $H_*(M)\neq 0$. For $p=2$ this is \cite[(II) Theorem~4]{Carlsson:1983} and for odd $p$ see \cite[Section~4.1]{Allday/Puppe:1993}. Alternatively, we can consider $B=kG$ as a commutative noetherian local ring and apply \cref{level_ring_csupp} as in the following section.
\end{example}

\subsection{Action of Hochschild cohomology} \label{level_HH_recover}

\Cref{level_lower_bound_reg} recovers results about lower bounds of level, and Loewy length, from \cite{Avramov/Buchweitz/Iyengar/Miller:2010,Briggs/Grifo/Pollitz:2024}. 

Let $A$ be a commutative ring and $E \colonequals \Kos^A(f_1, \ldots, f_c)$ the Koszul complex on a sequence $f_1, \ldots, f_c$ in $A$ as in \cref{HH_action}. Then $R \colonequals A[\chi_1, \ldots, \chi_c]$ acts on the derived category $\dcat{E}$ through the ring homomorphism $R \to \HH{E/A}$. 

The \emph{cohomological support} of dg $E$-modules $M$ and $N$ is
\begin{equation*}
\csupp_E(M,N) \colonequals \supp_R(\Hom[*]{\dcat{E}}{M}{N})\,;
\end{equation*}
see \cite[4.6]{Liu/Pollitz:2025}. This definition is not the same as in \cite[Definition~5.1.1]{Pollitz:2021}, as the latter considers the support without the irrelevant ideal. 

\begin{lemma} \label{csupp_reg_seq}
Let $A$ be a commutative ring and $E$ the Koszul complex on a sequence $f_1, \ldots, f_c$. Let $M$ be a dg $E$-module and let $x_1, \ldots, x_t$ be an $(M,M)$-regular sequence in $R$. Then
\begin{equation*}
\csupp_E(\kosobj{M}{(x_1, \ldots, x_t)},\kosobj{M}{(x_1, \ldots, x_t)}) = \csupp_E(M,M) \cap \Spec(R/\braket{x_1, \ldots, x_t})\,.
\end{equation*}
\end{lemma}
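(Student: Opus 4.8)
The plan is to reduce everything to the graded $R$-module $S \colonequals \Hom[*]{\dcat{E}}{M}{M}$, whose support is $\csupp_E(M,M)$ by definition, together with the single identification of the Koszul object $K \colonequals \kosobj{M}{(x_1, \ldots, x_t)}$ furnished by \cref{Hom_Koszul_object_in_different_variables}. Writing $\bmx = x_1, \ldots, x_t$, the sequence is $(M,M)$-regular and hence weakly $(M,M)$-regular, so \cref{Hom_Koszul_object_in_different_variables} with $C = M$ gives an isomorphism of graded $R$-modules $\Hom[*]{\dcat{E}}{M}{\susp^{-|\bmx|} K} \cong S/\braket{\bmx}S$. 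Since $S/\braket{\bmx}S = S \otimes_R R/\braket{\bmx}$, localizing at a prime $\fp$ and invoking graded Nakayama (which applies as $S$ is finitely generated over $R$ in this setting) shows $\supp_R(S/\braket{\bmx}S) = \supp_R S \cap \cV(\bmx) = \csupp_E(M,M) \cap \Spec(R/\braket{x_1, \ldots, x_t})$. Thus it suffices to prove that $\csupp_E(K,K) = \supp_R(S/\braket{\bmx}S)$; that is, the endomorphisms of $K$ and the module $\Hom[*]{\dcat{E}}{M}{K}$ have the same support.

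For the inclusion $\csupp_E(K,K) \subseteq \supp_R(S/\braket{\bmx}S)$ I would argue by a filtration. The object $K$ is obtained from $M$ by $t$ successive cone constructions, so it lies in $\thick_{\dcat{E}}(M)$ and is built from finitely many suspensions of $M$ through a finite sequence of exact triangles. Applying the cohomological functor $\Hom[*]{\dcat{E}}{-}{K}$ turns these triangles into long exact sequences of graded $R$-modules whose outer terms are shifts of $\Hom[*]{\dcat{E}}{M}{K} \cong \susp^{|\bmx|}(S/\braket{\bmx}S)$. Since the support of the middle term of a short exact sequence is contained in the union of the supports of the outer terms, and grading shifts do not change support, an induction gives $\supp_R \Hom[*]{\dcat{E}}{K}{K} \subseteq \supp_R(S/\braket{\bmx}S)$.

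For the reverse inclusion I would localize. Fix $\fp \in \supp_R S \cap \cV(\bmx)$; I must show $\Hom[*]{\dcat{E}}{K}{K}_\fp \neq 0$. As $M$ is compact, so is $K$, and by \cite[Theorem~4.7]{Benson/Iyengar/Krause:2008} one has $\Hom[*]{\dcat{E}}{M}{K}_\fp \cong \Hom[*]{\dcat{E}}{M}{L_{\cZ(\fp)}K}$ and $\Hom[*]{\dcat{E}}{K}{K}_\fp \cong \Hom[*]{\dcat{E}}{K}{L_{\cZ(\fp)}K}$. Because $L_{\cZ(\fp)}$ is an $R$-linear exact functor it commutes with the Koszul construction, so $L_{\cZ(\fp)}K \cong \kosobj{L_{\cZ(\fp)}M}{\bmx}$. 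Now $\Hom[*]{\dcat{E}}{M}{K}_\fp \cong (S/\braket{\bmx}S)_\fp$ is non-zero by the choice of $\fp$ (graded Nakayama), whence $L_{\cZ(\fp)}K \neq 0$. Finally, $L_{\cZ(\fp)}K$ is $L_{\cZ(\fp)}$-local, so the universal property of Bousfield localization yields $\Hom[*]{\dcat{E}}{K}{L_{\cZ(\fp)}K} \cong \Hom[*]{\dcat{E}}{L_{\cZ(\fp)}K}{L_{\cZ(\fp)}K}$, which contains the non-zero identity of the non-zero object $L_{\cZ(\fp)}K$. Hence $\Hom[*]{\dcat{E}}{K}{K}_\fp \neq 0$ and $\fp \in \csupp_E(K,K)$.

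I expect the reverse inclusion to be the main obstacle: the filtration only bounds the support from above, so one must separately rule out that passing to endomorphisms of the Koszul object loses support. The essential inputs there are the compatibility of localization with $\Hom$ out of compact objects (\cite[Theorem~4.7]{Benson/Iyengar/Krause:2008}), the commutation of the $R$-linear functor $L_{\cZ(\fp)}$ with Koszul objects, and the universal property of the localization. I would also make the standing finiteness hypotheses explicit, namely that $M$ is compact and $S$ is finitely generated over $R$, since both graded Nakayama and the identification of support with $\cV(\ann)$ depend on them.
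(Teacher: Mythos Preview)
Your proof is correct, but it takes a longer route than the paper for the key equality $\csupp_E(K,K) = \csupp_E(M,K)$, where $K = \kosobj{M}{\bmx}$. The paper obtains this in one line from $K \in \thick_{\dcat{E}}(M)$: your filtration argument already gives $\csupp_E(K,K) \subseteq \csupp_E(M,K)$, while the reverse inclusion holds for \emph{arbitrary} $M$ and needs neither compactness nor Bousfield localization. Indeed, if $\Hom[*]{\dcat{E}}{K}{K}_\fp = 0$ then some $s \notin \fp$ satisfies $s \cdot \id_K = 0$, that is $s(K) = 0$; post-composition with $s(K)$ then annihilates all of $\Hom[*]{\dcat{E}}{M}{K}$, so its localization at $\fp$ vanishes as well. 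With this equality in hand the paper simply computes $\csupp_E(M,K) = \supp_R(S/\braket{\bmx}S) = \csupp_E(M,M) \cap \cV(\bmx)$ exactly as you do. Your Bousfield-localization argument is valid but imports more machinery---compactness of $M$, \cite[Theorem~4.7]{Benson/Iyengar/Krause:2008}, and the reflection property of $L_{\cZ(\fp)}$---than the step requires. The finiteness of $S$ over $R$, which you rightly flag, is genuinely needed in both approaches for the Nakayama step $\supp_R(S/\braket{\bmx}S) = \supp_R(S) \cap \cV(\bmx)$.
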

\begin{proof}
For convenience we write $\bmx = x_1, \ldots, x_t$. Since $\kosobj{M}{\bmx}$ lies in the thick subcategory generated by $M$, we obtain
\begin{equation*}
\csupp_E(\kosobj{M}{\bmx},\kosobj{M}{\bmx}) = \csupp_E(M,\kosobj{M}{\bmx})\,.
\end{equation*}
As $\bmx$ is $(M,M)$-regular, using \cref{Hom_Koszul_object_in_different_variables} we obtain
\begin{equation*}
\begin{aligned}
\csupp_E(M,\kosobj{M}{\bmx}) &= \supp_R(\Hom[*]{\dcat{E}}{M}{\kosobj{M}{\bmx}}) \\
&= \supp_R(\Hom[*]{\dcat{E}}{M}{M}/\braket{\bmx} \Hom[*]{\dcat{E}}{M}{M}) \\
&= \csupp_E(M,M) \cap \Spec(R/\braket{\bmx})\,.
\end{aligned}
\end{equation*}
This finishes the proof.
\end{proof}

\begin{proposition} \label{level_koscx_csupp}
Let $A$ be a regular local ring with residue field $k$ and $f_1, \ldots, f_c$ a sequence in the maximal ideal of $A$. For the Koszul complex $E$ on $f_1, \ldots, f_c$ and a dg $E$-module $M \neq 0$ one has
\begin{equation*}
\level_{\dcat{E}}^k(M) \geq c - \dim \csupp_E(M,M) + 1\,.
\end{equation*}
\end{proposition}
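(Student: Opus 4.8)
The plan is to deduce this from \cref{level_lower_bound_reg} with $C = k$ by producing a $(k,k)$-regular sequence of length $c - \dim\csupp_E(M,M)$ whose Koszul object thickly generates $M$. First I would dispose of the degenerate case: if $M \notin \thick_{\dcat{E}}(k)$, then $\level_{\dcat{E}}^k(M) = \infty$ and there is nothing to prove, so I may assume $M \in \thick_{\dcat{E}}(k)$. In particular $\Hom[*]{\dcat{E}}{M}{M}$ is finitely generated over $R$, being finitely built from $\Hom[*]{\dcat{E}}{k}{k}$, so $\csupp_E(M,M)$ is a genuine closed subset of $\Spec R$ and its dimension is defined.

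Next I would compute the ambient support. From the isomorphism $\Hom[*]{\dcat{E}}{k}{k} \cong k[\chi_1, \ldots, \chi_c] \otimes_k \bigwedge(\susp^{-1} k^{\dim A})$ of \cref{HH_action}, the ideal $\fm_A$ of $A$ annihilates $\Hom[*]{\dcat{E}}{k}{k}$, and this module is free over $S \colonequals k[\chi_1, \ldots, \chi_c]$. Hence $\csupp_E(k,k) = \cV(\fm_A) \cong \Spec(S)$, which has dimension $c$. Since $M \in \thick_{\dcat{E}}(k)$, cohomological support is subadditive on triangles and stable under suspension and retracts in each variable, so $\csupp_E(M,M) \subseteq \csupp_E(k,k) = \cV(\fm_A)$. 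Write $d \colonequals \dim \csupp_E(M,M)$ and view $\csupp_E(M,M) = \cV(\fa)$ for a homogeneous ideal $\fa \subseteq S$ with $\dim S/\fa = d$; as $M \neq 0$ forces $\Hom[*]{\dcat{E}}{M}{M} \neq 0$, the ideal $\fa$ is proper.

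Then I would extract the regular sequence. Because $S$ is a Cohen--Macaulay graded polynomial ring with $S_0 = k$, one has $\operatorname{grade}(\fa) = \operatorname{height}(\fa) = c - d$, so $\fa$ contains a homogeneous $S$-regular sequence $x_1, \ldots, x_{c-d}$. Lifting each $x_i$ along $R \twoheadrightarrow R/\fm_A R = S$ to a homogeneous element $\tilde{x}_i \in R$ does not change its action on $\Hom[*]{\dcat{E}}{k}{k}$, where $\fm_A$ acts as zero; since that module is $S$-free, the lifts $\tilde{x}_1, \ldots, \tilde{x}_{c-d}$ form a regular sequence on it with nonzero quotient, hence are $(k,k)$-regular by \cref{prop:MM-regular}. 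Moreover $x_i \in \fa$ gives $\csupp_E(M,M) = \cV(\fa) \subseteq \cV(x_1, \ldots, x_{c-d})$ inside $\Spec S$, and by \cref{csupp_reg_seq} applied to the $(k,k)$-regular sequence $\tilde{\bmx}$,
\[
\csupp_E(\kosobj{k}{\tilde{\bmx}}, \kosobj{k}{\tilde{\bmx}}) = \csupp_E(k,k) \cap \Spec(R/\braket{\tilde{\bmx}}) = \cV(\fm_A) \cap \cV(\tilde{\bmx}),
\]
so that $\csupp_E(M,M) \subseteq \csupp_E(\kosobj{k}{\tilde{\bmx}}, \kosobj{k}{\tilde{\bmx}})$.

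Finally, I would invoke the classification of thick subcategories of $\thick_{\dcat{E}}(k)$ by cohomological support, in the spirit of \cref{cor:stratified_level}: the containment of supports upgrades to $M \in \thick_{\dcat{E}}(\kosobj{k}{\tilde{\bmx}})$. Then \cref{level_lower_bound_reg}, applied to the $(k,k)$-regular sequence $\tilde{x}_1, \ldots, \tilde{x}_{c-d}$ and the nonzero object $M$, yields $\level_{\dcat{E}}^k(M) \geq (c-d) + 1 = c - \dim\csupp_E(M,M) + 1$. The main obstacle is precisely this last membership step: converting the inclusion of cohomological supports into genuine membership in the thick subcategory requires a stratification or thick-subcategory classification theorem for $\dcat{E}$, together with the verification that $M$, $k$, and $\kosobj{k}{\tilde{\bmx}}$ all lie in the subcategory of objects with finitely generated cohomology to which such a classification applies. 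The commutative-algebra input of the third paragraph (grade equals height in the Cohen--Macaulay ring $S$, and the homogeneous lifting) is routine by comparison.
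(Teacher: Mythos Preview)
Your argument is correct and follows essentially the same route as the paper: reduce to $M \in \thick_{\dcat{E}}(k)$, extract from the annihilator of $\Hom[*]{\dcat{E}}{M}{M}$ a regular sequence on $S = k[\chi_1,\ldots,\chi_c]$ of the right length via the Cohen--Macaulay property, observe it is $(k,k)$-regular because $\Hom[*]{\dcat{E}}{k}{k}$ is $S$-free, and apply \cref{level_lower_bound_reg}. The only cosmetic difference is that the paper works directly with the annihilator ideal in $R = A[\chi_1,\ldots,\chi_c]$ and phrases the length as $\fa\text{-}\depth_R(S)$, then converts via $\fa\text{-}\depth_R(S) = \dim S - \dim(S\otimes_R R/\fa)$, whereas you pass to $S$ first and lift back; these are equivalent. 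The step you flag as the main obstacle---upgrading the support containment to $M \in \thick_{\dcat{E}}(\kosobj{k}{\bmx})$---is handled in the paper by invoking \cite[Proposition~4.8]{Liu/Pollitz:2025}, which applies because $A$ is regular and $M$ has finitely generated total homology.
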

\begin{proof}
We use the notation established above and set $S \colonequals k[\chi_1, \ldots, \chi_c]$. 

We assume $M \in \thick_{\dcat{E}}(k)$, as otherwise there is nothing to show. We set $\fa \colonequals \ann_R \Hom[*]{\dcat{E}}{M}{M}$. Let $x_1, \ldots, x_t \in \fa$ be a maximal $S$-regular sequence in $\fa$. Then $x_1, \ldots, x_t$ is $(k,k)$-regular and $\braket{x_1, \ldots, x_t} \subseteq \fa$. Hence
\begin{equation*}
\begin{aligned}
\csupp_E(M,M) &= \Spec(R/\fa) = \csupp_E(k,k) \cap \Spec(R/\fa) \\
&\subseteq \csupp_E(k,k) \cap \Spec(R/\braket{x_1, \ldots, x_t}) \\
&= \csupp_E(\kosobj{k}{(x_1, \ldots, x_t)},\kosobj{k}{(x_1, \ldots, x_t)})\,.
\end{aligned}
\end{equation*}
As $A$ is regular and $M$ has finitely generated total homology, by \cite[Proposition~4.8]{Liu/Pollitz:2025}, we have $M \in \thick_{\dcat{E}}(\kosobj{k}{(x_1, \ldots, x_t)})$. Now we can apply \cref{level_lower_bound_reg} to obtain
\begin{equation*}
\level_{\dcat{E}}^k(M) \geq t + 1 = \fa\text{-}\depth_R(S) + 1\,.
\end{equation*}
As $S$ is a Cohen--Macaulay module, one has
\begin{equation*}
\fa\text{-}\depth_R(S) = \dim(S) - \dim(S \otimes_R R/\fa) + 1\,;
\end{equation*}
this holds by the graded version of \cite[Theorem~2.12]{Bruns/Herzog:1998}. It remains to observe that
\begin{equation*}
\dim(S) = c \quad \text{and} \quad \dim(S \otimes_R R/\fa) = \dim(\csupp_E(M,M))\,. \qedhere
\end{equation*}
\end{proof}

For the Koszul complex $E$ there is a dg algebra morphism 
\begin{equation*}
E \to \hh[0]{}{E} = A/\braket{f_1, \ldots, f_c} \equalscolon B\,.
\end{equation*}
This morphism induced an exact functor $\dcat{B} \to \dcat{E}$ via restriction. Hence the lower bound for level in $\dcat{E}$ can be used to obtain a similar bound for level in $\dcat{B}$. To express the bound purely in terms of $B$, we recall the definition of the cohomological support for local rings from \cite[Section~6.1]{Pollitz:2021}.

Let $B$ be a commutative local ring and $\hat{B}$ its completion with respect to the maximal ideal. The \emph{cohomological support} of $M,N \in \dcat{B}$ is
\begin{equation*}
\csupp_B(M,N) \colonequals \csupp_E(\hat{B} \otimes_B M, \hat{B} \otimes_B N)
\end{equation*}
where $E$ is a minimal derived complete intersection approximation of $B$. That is, by the Cohen Structure Theorem there exists a complete local ring $A$ with maximal ideal $\fm$ and an ideal $\fa \subseteq \fm^2$ such that $\hat{B} = A/\fa$. Then $E$ is the Koszul complex on a minimal generating set of $\fa$. By \cite[6.1.3]{Pollitz:2021}, the cohomological support $\csupp_B(M,N)$ is independent of the choice of $A$ and the minimal generating set of $\fa$.

\begin{corollary} \label{level_ring_csupp}
Let $B$ be a commutative noetherian local ring with maximal ideal $\fn$ and residue field $k=B/\fn$. For any $B$-complex $M \neq 0$ one has
\begin{equation*}
\level_{\dcat{B}}^k(M) \geq \codim(B) - \dim \csupp_B(M,M) + 1\,,
\end{equation*}
where $\codim(B) = \rank_k(\fn/\fn^2) - \dim(B)$ is the \emph{codimension of $B$}. 
\end{corollary}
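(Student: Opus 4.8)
The plan is to deduce the bound from \cref{level_koscx_csupp} by passing to the completion and then restricting along the augmentation of a minimal derived complete intersection approximation, using throughout that level cannot increase under an exact functor: if $\sF \colon \cat{S} \to \cat{T}$ is exact, then $N \in \thick_\cat{S}(G)$ implies $\sF(N) \in \thick_\cat{T}(\sF(G))$ with $\level_\cat{T}^{\sF(G)}(\sF(N)) \leq \level_\cat{S}^G(N)$, which is immediate from the inductive definition of level. First I would pass to the completion. Since $\hat{B}$ is flat over $B$, base change $\hat{B} \otimes_B - \colon \dcat{B} \to \dcat{\hat{B}}$ is exact; it sends $k$ to $k$, and by faithful flatness it sends the non-zero object $M$ to a non-zero object. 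Hence
\begin{equation*}
\level_{\dcat{\hat{B}}}^k(\hat{B} \otimes_B M) \leq \level_{\dcat{B}}^k(M)\,.
\end{equation*}

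Next, by the Cohen structure theorem I would write $\hat{B} = A/\fa$ with $A$ a complete regular local ring with maximal ideal $\fm$ and $\fa \subseteq \fm^2$, and let $E = \Kos^A(f_1, \ldots, f_c)$ be the Koszul complex on a minimal generating set $f_1, \ldots, f_c$ of $\fa$, so that $\hat{B} = \hh[0]{}{E}$ and $c = \rank_k(\fa/\fm\fa)$. Restriction along the augmentation $E \to \hat{B}$ is an exact functor $\dcat{\hat{B}} \to \dcat{E}$ sending $k$ to $k$, giving
\begin{equation*}
\level_{\dcat{E}}^k(\hat{B} \otimes_B M) \leq \level_{\dcat{\hat{B}}}^k(\hat{B} \otimes_B M)\,.
\end{equation*}
By the definition $\csupp_B(M,M) = \csupp_E(\hat{B} \otimes_B M, \hat{B} \otimes_B M)$, applying \cref{level_koscx_csupp} to the non-zero dg $E$-module $\hat{B} \otimes_B M$ and chaining the two inequalities yields
\begin{equation*}
\level_{\dcat{B}}^k(M) \geq c - \dim \csupp_B(M,M) + 1\,.
\end{equation*}

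It then remains to compare $c$ with $\codim(B)$. Since $\fa \subseteq \fm^2$, one has $\hat{\fn}/\hat{\fn}^2 \cong \fm/\fm^2$, so $\rank_k(\fn/\fn^2) = \rank_k(\fm/\fm^2) = \dim A$, while $\dim B = \dim(A/\fa)$. As $A$ is regular, hence Cohen--Macaulay, the dimension formula gives $\dim A = \dim(A/\fa) + \operatorname{height}(\fa)$, whence $\codim(B) = \dim A - \dim(A/\fa) = \operatorname{height}(\fa)$; and since $\fa$ is generated by $c$ elements, Krull's height theorem bounds $\operatorname{height}(\fa) \leq c$. Thus $c \geq \codim(B)$, and substituting into the displayed inequality finishes the proof. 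The only genuine bookkeeping is verifying that the two exact functors act as claimed on $k$ and preserve non-vanishing, together with the height estimate $c \geq \codim(B)$; everything else is a direct concatenation of inequalities.
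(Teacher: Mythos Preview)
Your proof is correct and follows essentially the same route as the paper: pass to the completion, restrict along the augmentation $E\to\hat B$, apply \cref{level_koscx_csupp}, and identify the constants. The paper invokes an equality $\level_{\dcat{B}}^k(M)=\level_{\dcat{\hat B}}^k(\hat M)$ from \cite{Letz:2021} after first reducing to $M\in\thick_{\dcat{B}}(k)$, whereas you only use the inequality coming from exactness of base change (which suffices); conversely, you make explicit the step $c\ge\codim(B)$ via Krull's height theorem, which the paper absorbs silently into its citation of \cref{level_koscx_csupp}.
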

\begin{proof}
We assume $M \in \thick_{\dcat{B}}(k)$, as otherwise there is nothing to show. Then $M \in \dbcat{\mod{B}}$ and $\hat{M} \cong \hat{B} \otimes_B M$. Hence
\begin{equation*}
\begin{aligned}
\level_{\dbcat{B}}^k(M) &= \level_{\dcat{\hat{B}}}^k(\hat{M}) \geq \level_{\dcat{E}}^k(\hat{M}) \\
&\geq \codim(B) - \dim \csupp_E(\hat{M},\hat{M}) + 1 \\
&= \codim(B) - \dim \csupp_B(M,M) + 1\,,
\end{aligned}
\end{equation*}
using, in this order, \cite[Corollary~2.12]{Letz:2021}, \cite[Lemma~2.4(6)]{Avramov/Buchweitz/Iyengar/Miller:2010} and \cref{level_koscx_csupp}. 
\end{proof}

When $M$ is a perfect $B$-complex, then $\dim \csupp_B(M,M) = 0$. Hence \cref{level_ring_csupp} recovers \cite[Theorem~3]{Avramov/Buchweitz/Iyengar/Miller:2010} and \cite[Theorem~C, Theorem~3.1]{Briggs/Grifo/Pollitz:2024}.

\begin{remark}
The cohomological support for a complete intersection can also be obtained as the support of a triangulated category as in \cref{support}; see \cite[Section~11]{Benson/Iyengar/Krause:2008} for the details.
\end{remark}

\subsection{Level with respect to a Koszul object}

We provide another bound for level using a regular element. In spirit it is opposite to \cref{level_lower_bound_reg}, as the generator is the Koszul object with respect to the regular sequence.

\begin{lemma} \label{level_kosobj_kosobj_lower}
Let $R$ be a graded-commutative ring and $\cat{T}$ an $R$-linear triangulated category. We fix $x \in R$ and $M \in \cat{T}$.
We assume that $x$ is $M$-productive. If $x$ is an $(M,M)$-regular sequence, then
\begin{equation*}
\level^{\kosobj{M}{x^m}}(\kosobj{M}{x^n}) \geq \left\lceil \frac{n}{m} \right\rceil
\end{equation*}
for any integers $0 \leq m \leq n$.
\end{lemma}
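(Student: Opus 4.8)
The plan is to produce a non-zero $(t-1)$-fold $\kosobj{M}{x^m}$-ghost composite starting at $\kosobj{M}{x^n}$, where $t = \lceil n/m \rceil$, and then invoke the ghost lemma (\cref{ghost_lemma}, see also \cite[Lemma~2.2]{Beligiannis:2008}). Write $K_j \colonequals \kosobj{M}{x^j}$; we may assume $m \geq 1$. Since multiplication by $x$ is central and natural, the maps $x^m(K_n) \colon K_n \to \susp^{m|x|} K_n$ assemble into a chain
\[
K_n \xrightarrow{x^m} \susp^{m|x|} K_n \xrightarrow{x^m} \cdots \xrightarrow{x^m} \susp^{(t-1)m|x|} K_n
\]
of $t-1$ morphisms whose total composite is $x^{(t-1)m}(K_n)$. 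I would show that each morphism is $K_m$-ghost and that the composite is non-zero; the ghost lemma then gives $\level_{\cat{T}}^{K_m}(K_n) \geq (t-1)+1 = t$. (When $t = 1$, i.e.\ $n = m$, the chain is empty and one only needs $K_n \neq 0$, which follows from the non-zero condition via \cref{zero_one}.)

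For the ghost property, the crucial input is that $x^m$ is $M$-productive, that is $x^m(K_m) = 0$. Only $x$ is assumed $M$-productive, but the diagram chase proving \cref{productive_product} uses nothing beyond the defining and product triangles for the \emph{fixed} object $M$; it therefore shows that a product of $M$-productive elements is again $M$-productive, and inductively that $x^m$ is $M$-productive. Granting $x^m(K_m) = 0$, naturality of the central operator $x^m$ yields, for every $\phi \in \Hom[*]{\cat{T}}{K_m}{\susp^{a} K_n}$ and any $a$, the identity $x^m \cdot \phi = \susp^{m|x|}\phi \circ x^m(K_m) = 0$. Hence $x^m$ annihilates $\Hom[*]{\cat{T}}{K_m}{-}$, so every map in the chain is $K_m$-ghost.

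To see that the composite $x^{(t-1)m}(K_n)$ is non-zero, I would detect it on $M$-homology. Since $x$ is $(M,M)$-regular, it is a non-zero divisor on $S \colonequals \Hom[*]{\cat{T}}{M}{M}$ by \cref{prop:MM-regular}, hence so is $x^n$, and \cref{regular_Hom_Kos_obj_one_element} provides an $R$-linear isomorphism $\ch{M}{\susp^{-n|x|}K_n} \cong S/x^n S$ under which $x^{(t-1)m}$ acts as multiplication by $x^{(t-1)m}$. Put $j \colonequals (t-1)m$; the elementary inequality $\lceil n/m\rceil - 1 < n/m$ gives $j < n$. The non-zero condition in \cref{defn_reg_sequence} forces $x$ to be a non-unit of $S$, so $x^j \notin x^n S$: otherwise $x^j = x^n s$ with $x$ a non-zero divisor would give $1 = x^{n-j}s$, making $x$ a unit. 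Thus $\overline{x^j} \neq 0$ in $S/x^n S$, multiplication by $x^j$ is non-zero there, and therefore $x^{(t-1)m}(K_n)$ induces a non-zero map on $\ch{M}{-}$ and is in particular non-zero.

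The main obstacle is the productivity claim $x^m(K_m) = 0$: the entire ghost argument rests on it, and I must check carefully that the proof of \cref{productive_product} genuinely localizes to the fixed object $M$, so that $M$-productivity—rather than global productivity—propagates to powers of $x$. (Note that the weaker fact that $x^{2m}$ annihilates $\Hom[*]{\cat{T}}{K_m}{-}$, obtainable from the defining triangle of $K_m$ without productivity, only yields the non-sharp bound $\lceil n/2m\rceil$, so the sharp productivity statement is essential.) Everything else is routine bookkeeping with the regular-element isomorphism of \cref{regular_Hom_Kos_obj_one_element} and the elementary count $(\lceil n/m\rceil-1)m < n$ controlling the length of the chain.
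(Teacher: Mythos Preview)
Your proposal is correct and follows essentially the same approach as the paper: both build the ghost chain $K_n \xrightarrow{x^m} \cdots \xrightarrow{x^m} \susp^{(t-1)m|x|}K_n$, use $M$-productivity of $x^m$ (via the argument of \cref{productive_product}, localized to the fixed object $M$, exactly as the paper does) to get $K_m$-ghostness, and then invoke the ghost lemma. The only variation is in the non-vanishing step: the paper argues by contradiction from the defining triangle of $K_n$ that $x^{n-1}(K_n)\neq 0$ and then factors, whereas you detect $x^{(t-1)m}(K_n)\neq 0$ on $M$-homology via the isomorphism $\ch{M}{\susp^{-n|x|}K_n}\cong S/x^nS$ of \cref{regular_Hom_Kos_obj_one_element}; your counting $(\lceil n/m\rceil-1)m<n$ is in fact slightly cleaner than the paper's phrasing ``$mt\leq n$''.
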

\begin{proof}
We first show that $x^m(\kosobj{M}{x^n})$ is $\kosobj{M}{x^m}$-ghost whenever $m < n$. Given any morphism $f \colon \susp^d \kosobj{M}{x^m} \to \kosobj{M}{x^n}$ we obtain
\begin{equation*}
x^m(\kosobj{M}{x^n}) \circ f = (\susp^{m|x|} f) \circ (\susp^d x^m(\kosobj{M}{x^m})) = 0
\end{equation*}
as $x^m$ is $M$-productive by assumption and \cref{productive_product}. Hence $x^m(\kosobj{M}{x^n})$ is $\kosobj{M}{x^m}$-ghost. 

It remains to show that $x^{n-1}(\kosobj{M}{x^n}) \neq 0$. We assume $x^{n-1}(\kosobj{M}{x^n}) = 0$. Considering the commutative diagram
\begin{equation*}
\begin{tikzcd}
& \susp^{|x|} M \ar[dl,dashed,"f"] \ar[d,"x^{n-1}" description] \ar[dr,"0"] \\
M \ar[r,"x^n"] & \susp^{n|x|} M \ar[r] & \kosobj{M}{x^n} \ar[r] & \susp M
\end{tikzcd}
\end{equation*}
where the bottom row is an exact triangle, we obtain a morphism $f \colon \susp^{|x|} M \to M$ such that $x^{n-1} (xf - \id) = 0$. As $x$ is a non-zero divisor on $\Hom[*]{\cat{T}}{M}{M}$, we obtain $xf = \id$. By the proof of \cref{zero_one} this yields $\kosobj{M}{x} = 0$. This is a contradiction as $x$ is $(M,M)$-regular sequence.

Therefore, for any integer $t$, with $mt\leq n$, the morphism $x^{m(t-1)}(\kosobj{M}{x^n})$ is non-zero and $(t-1)$-fold $\kosobj{M}{x^m}$-ghost.
\end{proof}

In the case $m = 1$ we immediately obtain an equality.

\begin{corollary} \label{level_kosobj_kosobj_x}
Let $R$ be a graded-commutative ring and $\cat{T}$ an $R$-linear triangulated category. Let $x \in R$ and $M \in \cat{T}$.
We assume that $x$ is $M$-productive. If $x$ is an $(M,M)$-regular sequence, then
\begin{equation*}
\level^{\kosobj{M}{x}}(\kosobj{M}{x^n}) = n
\end{equation*}
for any positive integer $n$.
\end{corollary}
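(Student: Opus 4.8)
The plan is to combine the lower bound supplied by \cref{level_kosobj_kosobj_lower} with a matching upper bound obtained by induction on the exact triangle \cref{kos_product_triangle}.

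For the lower bound, I would specialize \cref{level_kosobj_kosobj_lower} to $m = 1$. Since $\lceil n/1 \rceil = n$, this gives $\level^{\kosobj{M}{x}}(\kosobj{M}{x^n}) \geq n$ immediately, with no further work. Note that the hypotheses of \cref{level_kosobj_kosobj_lower}, namely that $x$ is $M$-productive and an $(M,M)$-regular sequence, are exactly those assumed here.

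For the upper bound, I would argue by induction on $n$ that $\level^{\kosobj{M}{x}}(\kosobj{M}{x^n}) \leq n$. The base case $n = 1$ is clear, as $\kosobj{M}{x}$ is the generator and hence has level $1$. For the inductive step, I would apply the product triangle \cref{kos_product_triangle} with the elements $x$ and $x^{n-1}$, so that $x \cdot x^{n-1} = x^n$, to obtain the exact triangle
\[
\kosobj{M}{x^{n-1}} \to \kosobj{M}{x^n} \to \susp^{(n-1)|x|} \kosobj{M}{x} \to \susp \kosobj{M}{x^{n-1}}\,.
\]
Invoking subadditivity of level along exact triangles together with its invariance under suspension, and noting that $\susp^{(n-1)|x|} \kosobj{M}{x}$ has level $1$, this gives $\level^{\kosobj{M}{x}}(\kosobj{M}{x^n}) \leq \level^{\kosobj{M}{x}}(\kosobj{M}{x^{n-1}}) + 1$. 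The induction hypothesis then yields the bound $\leq n$, and combining with the lower bound produces the claimed equality.

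The argument presents no serious obstacle; the only points requiring care are invoking the standard subadditivity of level along exact triangles and verifying the indexing of \cref{kos_product_triangle} so that the cone term is genuinely a suspension of the generator $\kosobj{M}{x}$ rather than of some other Koszul object.
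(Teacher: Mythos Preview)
Your proposal is correct and matches the paper's own proof essentially verbatim: the lower bound is \cref{level_kosobj_kosobj_lower} with $m=1$, and the upper bound follows by induction from the exact triangle \cref{kos_product_triangle}. The paper states the latter in a single clause without spelling out the induction, but the argument is the same.
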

\begin{proof}
The inequality $\geq$ holds by \cref{level_kosobj_kosobj_lower}, and the reverse inequality follows from the exact triangle in \cref{kos_product_triangle}.
\end{proof}

The exact triangle in \cref{kos_product_triangle} is not enough for the lower bound when $n$ is not a multiple of $m$. In this situation we use a higher octahedral axiom, which holds in an enhanced triangulated category; see \cref{app:higher_octa}. 

\begin{proposition} \label{level_kosobj_kosobj_equal}
Let $R$ be a graded-commutative ring and $\cat{T}$ an $R$-linear topological triangulated category. Let $x \in R$ and $M \in \cat{T}$. We assume that $x$ is $M$-productive. If $x$ is an $(M,M)$-regular sequence, then
\begin{equation*}
\level^{\kosobj{M}{x^m}}(\kosobj{M}{x^n}) = \left\lceil \frac{n}{m} \right\rceil
\end{equation*}
for positive integers $m \leq n$. 
\end{proposition}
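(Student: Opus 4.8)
The lower bound $\level^{\kosobj{M}{x^m}}(\kosobj{M}{x^n}) \geq \lceil n/m\rceil$ is exactly \cref{level_kosobj_kosobj_lower}, so the entire content of the statement is the reverse inequality
\begin{equation*}
\level^{\kosobj{M}{x^m}}(\kosobj{M}{x^n}) \leq q\,, \qquad q \colonequals \left\lceil \tfrac{n}{m} \right\rceil\,.
\end{equation*}
The plan is to realise $\kosobj{M}{x^n}$ as a retract of an object built from $q$ copies of $\kosobj{M}{x^m}$. First I would record why the naive approach fails, since this is what forces the use of the enhancement. Iterating the exact triangle \cref{kos_product_triangle} for the factorisation $x^n = x^m\cdot x^{n-m}$ and writing $n=(q-1)m+r$ with $1\leq r\leq m$ expresses $\kosobj{M}{x^n}$ through $q-1$ copies of $\kosobj{M}{x^m}$ and one copy of the \emph{remainder} $\kosobj{M}{x^r}$; when $m\nmid n$ one has $r<m$, and then $\level^{\kosobj{M}{x^m}}(\kosobj{M}{x^r})>1$ in general (for instance $\kosobj{A}{x}$ is not a retract of a finite coproduct of suspensions of $\kosobj{A}{x^2}$ in $\dcat{k[x]}$). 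So the binary octahedral axiom overcounts by the level of the remainder, and a finer construction is needed.

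Set $d \colonequals qm-n \in \{0,\dots,m-1\}$, the \emph{defect}. The goal is to construct an object $W$ which is a $q$-fold iterated cone of suspensions of $\kosobj{M}{x^m}$ — hence of level at most $q$ — and which has $\kosobj{M}{x^n}$ as a direct summand. Because a linear tower of suspensions of $M$ can only produce $\kosobj{M}{x^{e}}$ with $e$ a sum of the individual exponents, there is no such tower with all cofibres equal to $\kosobj{M}{x^m}$ unless $m\mid n$. Instead I would build $W$ as an iterated cone directly on $\kosobj{M}{x^m}$, along the connecting maps $\zeta$ of the product triangles \cref{kos_product_triangle} (those with $\cone(\susp^{-1}\zeta)\cong\kosobj{M}{x^{2m}}$). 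The higher octahedral axiom from \cref{app:higher_octa}, available since $\cat{T}$ is topological, is what assembles the corresponding $q-1$ maps into a single Postnikov system and guarantees the needed compatibility of the connecting morphisms — precisely the compatibility that is unavailable from the plain triangulated axioms and whose absence defeats the binary argument.

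The heart of the argument is to use that $x^m$ is $M$-productive, i.e.\ $x^m(\kosobj{M}{x^m})=0$, to control the associated graded of this tower. Productivity forces the relevant connecting maps to vanish on the low-power remainder piece, so that instead of leaving a genuine level-increasing step $\kosobj{M}{x^r}$ the construction peels the defect off as a direct summand. Concretely, I expect the tower to produce an isomorphism
\begin{equation*}
W \cong \kosobj{M}{x^n} \oplus \susp^{e}\kosobj{M}{x^{d}}
\end{equation*}
for a suitable shift $e$ (the second summand absent when $d=0$), with $\level^{\kosobj{M}{x^m}}(W)\leq q$. Since $\kosobj{M}{x^n}$ is then a retract of $W$ and level does not increase under retracts, $\level^{\kosobj{M}{x^m}}(\kosobj{M}{x^n})\leq q$; combined with \cref{level_kosobj_kosobj_lower} this gives the equality. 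Equivalently, one may invoke the inductive definition of level directly, taking the auxiliary summand $\tilde N=\susp^{e}\kosobj{M}{x^{d}}$.

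The main obstacle is exactly this identification: proving that the graded pieces of the tower are all $\kosobj{M}{x^m}$ and that $\kosobj{M}{x^n}$ splits off as a summand of $W$ — that is, that the defect $d$ is absorbed into an auxiliary summand rather than into a level-increasing step. This is where both the productivity of $x^m$ and the higher octahedral structure are indispensable, and it is the reason the topological hypothesis is imposed. By comparison, verifying the splitting in concrete cases (such as the self-extensions of $\kosobj{A}{x^2}$ over $\dcat{k[x]}$, where $\kosobj{A}{x^3}\oplus\kosobj{A}{x}$ arises as a two-step extension of $\kosobj{A}{x^2}$) and bookkeeping the suspensions $\susp^{e}$ and signs throughout the tower is routine but tedious.
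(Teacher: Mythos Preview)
Your target decomposition is exactly right --- with $n=km+\ell$, $1\leq\ell\leq m$, the paper also produces an exact triangle
\[
\kosobj{M}{x^m}\to \kosobj{M}{x^n}\oplus \susp^{\ell|x|}\kosobj{M}{x^{m-\ell}}\to \susp^{\ell|x|}\kosobj{M}{x^{km}}\to \susp\kosobj{M}{x^m}
\]
and then uses $\level^{\kosobj{M}{x^m}}(\kosobj{M}{x^{km}})\leq k$ from the ordinary product triangles \cref{kos_product_triangle}. Your $d=qm-n$ equals the paper's $m-\ell$, so the splitting you predict is the one that actually occurs. But two points deserve correction.

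First, productivity plays no role in the upper bound. The paper obtains the triangle above by applying the higher octahedral axiom of \cref{octahedral_iterated} once, to the $3$-fold composition
\[
M\xrightarrow{x^{\ell}}\susp^{\ell|x|}M\xrightarrow{x^{m-\ell}}\susp^{m|x|}M\xrightarrow{x^{(k-1)m+\ell}}\susp^{n|x|}M\,,
\]
and reads off the homotopy cartesian square relating the cones $\kosobj{M}{x^m}$, $\kosobj{M}{x^{m-\ell}}$, $\kosobj{M}{x^n}$, $\kosobj{M}{x^{km}}$ (this is part \cref{octahedral_iterated:extra} of \cref{octahedral_iterated}). No vanishing of $x^m$ on any Koszul object is used; productivity enters only through \cref{level_kosobj_kosobj_lower}. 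Your claim that productivity is ``indispensable'' for the splitting is a misdiagnosis.

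Second, your construction is needlessly elaborate. You propose building a $q$-step Postnikov tower on $\kosobj{M}{x^m}$ along connecting maps $\zeta$, then invoking productivity to force the graded pieces to behave. The paper instead works one level down, on $M$ itself: a single $3$-fold factorisation of $x^n$ yields the required homotopy cartesian square in one stroke, and the remaining $k$ steps are handled by the unenhanced triangle \cref{kos_product_triangle}. Your route may well be completable, but as written it is a plan with the key identification (``I expect the tower to produce\dots'') left unproven, whereas the paper's argument is a two-line application of \cref{octahedral_iterated}.
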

\begin{proof}
The inequality $\geq$ holds by \cref{level_kosobj_kosobj_lower}. For the reverse inequality we construct exact triangles. We write $n = km + \ell$ for $k \geq 1$ and $0 \leq \ell < m$. Then there are exact triangles
\begin{equation*}
\kosobj{M}{x^m} \to \kosobj{M}{x^{im}} \to \susp^{m|x|} \kosobj{M}{x^{(i-1)m}} \to \susp \kosobj{M}{x^m}
\end{equation*}
for $1 \leq i \leq k$. by \cref{kos_product_triangle}. This shows the inequality for $\ell = 0$. When $\ell \neq 0$ we apply the higher octahedral axiom to the composition
\begin{equation*}
M \xrightarrow{x^{\ell}} \susp^{\ell|x|} M \xrightarrow{x^{m-\ell}} \susp^{m|x|} M \xrightarrow{x^{(k-1)m+\ell}} \susp^{n|x|} M\,;
\end{equation*}
see \cref{octahedral_iterated}. Then there exists a homotopy cartesian square
\begin{equation*}
\begin{tikzcd}
\kosobj{M}{x^m} \ar[r] \ar[d] & \susp^{\ell|x|} \kosobj{M}{x^{m-\ell}} \ar[d] \\
\kosobj{M}{x^{n}} \ar[r] & \susp^{\ell|x|} \kosobj{M}{x^{km}} \nospacepunct{,}
\end{tikzcd}
\end{equation*}
and hence an exact triangle
\begin{equation*}
\kosobj{M}{x^m} \to \kosobj{M}{x^n} \oplus \susp^{\ell|x|} \kosobj{M}{x^{m-\ell}} \to \susp^{\ell|x|} \kosobj{M}{x^{km}} \to \susp \kosobj{M}{x^m}\,.
\end{equation*}
This establishes $\leq$.
\end{proof}

\appendix

\section{Hom-sets involving Koszul objects}

Let $R$ be a graded-commutative ring, $x\in R$, and $\cat{T}$ an $R$-linear triangulated category with objects $C$, $M$. In \cref{induced_iso_reg_elt} we showed there is an $R$-linear isomorphism
\begin{equation*}
\Hom[*]{\cat T}{\susp^{-1} \kosobj{C}{x}}{M} \cong \Hom[*]{\cat T}{C}{\susp^{-|x|} \kosobj{M}{x}}
\end{equation*}
provided $x$ is $(C,M)$-regular. In \cite[(3.1)]{Benson/Iyengar/Krause/Pevtsova:2021} it is stated, without proof, that there is an isomorphism without assuming that $x$ is $(C,M)$-regular; compare \cref{Hom_Kos_BIKP}. We provide an example in which there is no $R$-linear isomorphism.

\begin{example}\label{Counterexample:Hom_Kos_different_variable}
Consider the ring of dual numbers $A=k[x]/(x^2)$ over a field $k$. The center of $\cat{T}= \dbcat{\mod{A}}$ is the trivial extension ring $k[\zeta] \ltimes \prod_{r \geqslant 0} k$ with $\zeta$ acting trivially on the product; see \cite[Proposition~5.4]{Krause/Ye:2011}. The element $\zeta$ is of degree $2$ if the characteristic of $k$ is not $2$ and of degree $1$ otherwise.

The category $\dbcat{\mod{A}}$ is a Krull-Schmidt category and the indecomposable objects are the chain complexes $A_m^n$ of the form
\begin{equation*}
\ldots \rightarrow 0\rightarrow A\xrightarrow{x} A\xrightarrow{x}\ldots \xrightarrow{x} A\rightarrow 0\rightarrow\ldots
\end{equation*}
for $n\geq m$ with $0$ in degrees $>n$ and in degrees $<m$. We write $x^n_m$ for the endomorphism of $A^n_m$ given by multiplication with $x$ in degree $m$ and the zero homomorphism otherwise. As in \cite[Example 4.7]{Letz/Stephan:2025}, we consider $\eta_0, \eta_1 \in \prod_{r \geqslant 0} k$ determined by
\begin{equation*}
\eta_r(A_m^n) = \begin{cases} 
x_m^n & n-m = r \\
0 & n-m \neq r \nospacepunct{.}
\end{cases}
\end{equation*}
For the element $\eta = \eta_0+\eta_1$ of degree zero, we obtain $\kosobj{A_n^n}{\eta}\cong A^{n+1}_n$ and $\kosobj{A^n_{n-1}}{\eta}\cong A^{n+1}_{n-1} \oplus A^n_n$. Thus
\begin{equation*}
\Hom{\cat T}{\susp^{-1} \kosobj{A_n^n}{\eta}}{A^n_{n-1}} \cong \Hom{\cat T}{A^n_{n-1}}{A^n_{n-1}}\cong k\oplus k
\end{equation*}
as $A$-modules which is not isomorphic to the $A$-module
\begin{equation*}
\Hom{\cat T}{A^n_n}{\kosobj{A^n_{n-1}}{\eta}} \cong \hh[n]{}{A^{n+1}_{n-1}\oplus A^n_n} \cong A\nospacepunct{.}
\end{equation*}
\end{example}

\section{Higher octahedral axiom} \label{app:higher_octa}

It is well-known that the notion of a triangulated category is sometimes insufficient. The triangulated categories that appear in algebra and topology are equipped with further structure. For \cref{level_kosobj_kosobj_equal} we require a higher version of the octahedral axiom applied to an $n$-fold composition for $n \geq 2$. A similar property previously appeared in \cite{Maltsiniotis:2006} as $n$-exact triangles; also see \cite[\S13]{Groth/Stovicek:2016}. We provide a higher octahedral axiom for topological triangulated categories. A \emph{topological} triangulated category is the homotopy category of a stable cofibration category; see \cite{Schwede:2013}. 

\begin{proposition} \label{octahedral_iterated}
Let $\cat{T}$ be a topological triangulated category and $n$ a positive integer. Given a sequence of morphisms
\begin{equation*}
X_0 \xrightarrow{f_1} X_1 \xrightarrow{f_2} X_2 \to \cdots \to X_{n-1} \xrightarrow{f_n} X_n
\end{equation*}
in $\cat{T}$ we set $f_{i,j} \colonequals f_j \circ \cdots \circ f_i$ for $i \leq j$. Let 
\begin{equation*}
X_{i-1} \xrightarrow{f_{i,j}} X_j \xrightarrow{g_{i,j}} \cone(f_{i,j}) \xrightarrow{h_{i,j}} \susp X_{i-1}
\end{equation*}
be exact triangles in $\cat{T}$ for $1 \leq i \leq j \leq n$. Then there exist morphisms
\begin{equation*}
\begin{aligned}
u_{i,j} &\colon \cone(f_{i,j}) \to \cone(f_{i+1,j}) \quad \text{for } 1 \leq i < j \leq n \quad \text{and} \\
v_{i,j} &\colon \cone(f_{i,j}) \to \cone(f_{i,j+1}) \quad \text{for } 1 \leq i \leq j < n
\end{aligned}
\end{equation*}
such that
\begin{enumerate}
\item \label{octahedral_iterated:triangles} the triangles
\begin{equation*}
\cone(f_{i,j}) \xrightarrow{v_{i,k-1} \circ \cdots \circ v_{i,j}} \cone(f_{i,k}) \xrightarrow{u_{j,k} \circ \cdots \circ u_{i,k}} \cone(f_{j+1,k}) \xrightarrow{(\susp g_{i,j}) \circ h_{j+1,k}}
\end{equation*}
are exact for $1 \leq i \leq j < k \leq n$;
\item \label{octahedral_iterated:comp_three} the following diagrams commute
\begin{equation*}
\begin{tikzcd}
X_{j+1} \ar[r,"g_{i,j+1}"] \ar[dr,"g_{i+1,j+1}" swap] & \cone(f_{i,j+1}) \ar[d,"u_{i,j+1}"] \\
& \cone(f_{i+1,j+1})
\end{tikzcd}
\quad \text{and} \quad
\begin{tikzcd}
\cone(f_{i,j}) \ar[d,"v_{i,j}" swap] \ar[dr,"h_{i,j}"] &  \\
\cone(f_{i,j+1}) \ar[r,"h_{i,j+1}" swap] & \susp X_{i-1}
\end{tikzcd}
\end{equation*}
for $1 \leq i \leq j < n$; 
\item \label{octahedral_iterated:comp_four} the following squares commute and are homotopy cartesian
\begin{equation*}
\begin{tikzcd}
X_j \ar[r,"f_{j+1}"] \ar[d,"g_{i,j}"] & X_{j+1} \ar[d,"g_{i,j+1}"] \\
\cone(f_{i,j}) \ar[r,"v_{i,j}"] & \cone(f_{i,j+1})
\end{tikzcd}
\quad \text{and} \quad
\begin{tikzcd}
\cone(f_{i,j+1}) \ar[r,"u_{i,j+1}"] \ar[d,"h_{i,j+1}"] & \cone(f_{i+1,j+1}) \ar[d,"h_{i+1,j+1}"] \\
\susp X_{i-1} \ar[r,"\susp f_i"] & \susp X_i
\end{tikzcd} 
\end{equation*}
for $1 \leq i \leq j < n$;
\item \label{octahedral_iterated:extra} the following square commutes and is homotopy cartesian
\begin{equation*}
\begin{tikzcd}
\cone(f_{i-1,i}) \ar[d,"u_{i-1,i}"] \ar[r,"v_{i-1,i}"] & \cone(f_{i-1,i+1}) \ar[d,"u_{i-1,i+1}"] \\
\cone(f_{i,i}) \ar[r,"v_{i,i}"] & \cone(f_{i,i+1})
\end{tikzcd}
\end{equation*}
for $1 < i < n$.
\end{enumerate}
\end{proposition}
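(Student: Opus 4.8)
The plan is to exploit the topological enhancement of $\cat{T}$. By definition $\cat{T}$ is the homotopy category of a stable cofibration category $\cat{C}$, in which exact triangles are represented by cofiber sequences and homotopy (co)cartesian squares by pushouts along cofibrations. First I would lift the diagram $X_0 \to X_1 \to \cdots \to X_n$ to $\cat{C}$ and, proceeding inductively through the factorization axiom (factor $X_0 \to X_1$ as a cofibration followed by a weak equivalence, absorb the weak equivalence into the next map, and repeat), replace it up to weak equivalence by a sequence of cofibrations. After renaming I may assume each $f_i$, and hence each composite $f_{i,j}$, is a cofibration. Since any two cones of a fixed morphism are related by an isomorphism of triangles restricting to the identity on the first two terms, and which is therefore compatible with both the structure map into the cone and the map out of it, I may further assume that each given triangle $X_{i-1} \xrightarrow{f_{i,j}} X_j \xrightarrow{g_{i,j}} \cone(f_{i,j}) \xrightarrow{h_{i,j}} \susp X_{i-1}$ is the cofiber sequence of the cofibration $f_{i,j}$; the general statement follows by transporting the morphisms $u_{i,j},v_{i,j}$ constructed below along these isomorphisms.

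With this setup the model for $\cone(f_{i,j})$ is the quotient $X_j/X_{i-1}$, formed as the pushout of $0 \leftarrow X_{i-1} \to X_j$. I would then take $u_{i,j}\colon X_j/X_{i-1} \to X_j/X_i$ to be the map induced by the inclusion $X_{i-1} \to X_i$ of subobjects, and $v_{i,j}\colon X_j/X_{i-1} \to X_{j+1}/X_{i-1}$ to be the map induced by $f_{j+1}\colon X_j \to X_{j+1}$ on quotients. Both are defined strictly in $\cat{C}$ by the universal property of the quotients, so the commuting triangle in \cref{octahedral_iterated:comp_three} involving $g$ and $u$ holds on the nose, as $u_{i,j+1}\circ g_{i,j+1}$ and $g_{i+1,j+1}$ are both the two-step quotient $X_{j+1}\to X_{j+1}/X_i$.

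The verifications not involving suspensions then reduce to standard cofiber calculus. For \cref{octahedral_iterated:triangles}, the composite of the $v$-maps realizes the cofibration $X_j/X_{i-1}\to X_k/X_{i-1}$, whose cofiber is $(X_k/X_{i-1})/(X_j/X_{i-1})=X_k/X_j$, so the displayed sequence is a genuine cofiber sequence and hence exact. The first square in \cref{octahedral_iterated:comp_four} is the pushout of $X_{j+1}\xleftarrow{f_{j+1}} X_j \xrightarrow{g_{i,j}} X_j/X_{i-1}$ along the cofibration $f_{j+1}$, under the identification $X_{j+1}\sqcup_{X_j}(X_j/X_{i-1})=X_{j+1}/X_{i-1}$, and is therefore homotopy cocartesian. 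The square in \cref{octahedral_iterated:extra} is homotopy cocartesian because the $v$-maps in its two rows have the common cofiber $X_{i+1}/X_i$ and the $u$-maps induce the identity on it. In the stable category homotopy cocartesian and homotopy cartesian coincide, which supplies the remaining assertions of \cref{octahedral_iterated:comp_four,octahedral_iterated:extra}.

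The hard part is the bookkeeping of the connecting maps, that is, every assertion touching the boundary morphisms $h_{i,j}$: the identification of the connecting map in \cref{octahedral_iterated:triangles} with $(\susp g_{i,j})\circ h_{j+1,k}$, the triangle in \cref{octahedral_iterated:comp_three} relating $v_{i,j}$ to the $h$'s, and the commutativity together with the homotopy cartesianness of the second square in \cref{octahedral_iterated:comp_four}, whose bottom row is $\susp f_i$. Each is an instance of the naturality of the boundary map of the Puppe sequence associated with the chain $X_{i-1}\to X_j\to X_k$: on the models the three cofiber sequences of this chain assemble into the classical octahedron, and tracing the construction of the connecting maps yields the prescribed factorizations, for instance the boundary map $X_k/X_j\to\susp(X_j/X_{i-1})$ of \cref{octahedral_iterated:triangles} factors as $h_{j+1,k}$ followed by $\susp g_{i,j}$. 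The second square in \cref{octahedral_iterated:comp_four} is then homotopy cartesian because the map it induces on horizontal cofibers is the canonical self-equivalence of $\susp(X_i/X_{i-1})$, the common cofiber of $u_{i,j+1}$ and of $\susp f_i$. These identifications are routine but, crucially, must be carried out on the cofibration-category models, since the triangulated axioms by themselves do not pin down the higher coherences.
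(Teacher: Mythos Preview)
Your proposal is correct and follows essentially the same route as the paper: lift to the underlying stable cofibration category, inductively factor to replace the $f_i$ by cofibrations, model $\cone(f_{i,j})$ by the quotient $X_j/X_{i-1}$ via iterated pushouts, and read off the $u$'s and $v$'s as the induced maps on quotients. The paper then cites \cite[Proposition~A.11]{Schwede:2013} for the naturality of the connecting map $\delta$ (your Puppe-sequence bookkeeping) and \cite[Lemma~5.7, Proposition~5.8]{Letz/Stephan:2025} for the homotopy cartesian squares, whereas you argue the latter directly via common cofibers; these are the same facts packaged differently.
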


For $n = 2$ condition \cref{octahedral_iterated:triangles,octahedral_iterated:comp_three} and the first part of \cref{octahedral_iterated:comp_four} is the classical octahedral axiom, while the second part of \cref{octahedral_iterated:comp_four} is the additional condition for strong triangulated categories; see \cite[Definition~3.8]{May:2001}. For $n > 2$ the compatibilities \cref{octahedral_iterated:comp_three,octahedral_iterated:comp_four} are obtained by applying the classical octahedral axiom to the compositions $f_{j+1,k} \circ f_{i,j}$ for $1 \leq i \leq j < k \leq n$. Additionally, \cref{octahedral_iterated:triangles,octahedral_iterated:extra} and the second part of \cref{octahedral_iterated:comp_four} are additional conditions.

\begin{proof}
Let $\cat{C}$ be a stable cofibration category with $\cat{T} = \Ho(\cat{C})$ and let $\gamma \colon \cat{C} \to \cat{T}$ be the localization functor. By \cite[Theorem~A.1]{Schwede:2013} any morphism in $\cat{T}$ is of the form $\gamma(s)^{-1} \gamma(f')$ where $s$ is an acyclic cofibration and $f$ a morphism in $\cat{C}$. Further, in $\cat{C}$ every morphism factors as a cofibration followed by a weak equivalence. So we obtain a commutative diagram
\begin{equation*}
\begin{tikzcd}[column sep=large]
\hat{X}_0 \ar[r,"\gamma(\hat{f}_1)"] \ar[d,"="] & \hat{X}_1 \ar[r,"\gamma(\hat{f}_2)"] \ar[d,"\gamma(t_1)","\cong" swap] & \hat{X}_2 \ar[r] \ar[d,"\gamma(t_2)","\cong" swap] & \cdots \ar[r] & \hat{X}_{n-1} \ar[r,"\gamma(\hat{f}_n)"] \ar[d,"\gamma(t_{n-1})","\cong" swap] & \hat{X}_n \ar[d,"\gamma(t_n)","\cong" swap] \\
\bar{X}_0 \ar[r,"\gamma(\bar{f}_1)"] \ar[d,"="] & \bar{X}_1 \ar[r,"\gamma(\bar{f}_2)"] \ar[d,"\gamma(s_1)^{-1}","\cong" swap] & \bar{X}_2 \ar[r] \ar[d,"\gamma(s_2)^{-1}","\cong" swap] & \cdots \ar[r] & \bar{X}_{n-1} \ar[r,"\gamma(\bar{f}_n)"] \ar[d,"\gamma(s_{n-1})^{-1}","\cong" swap] & \bar{X}_n \ar[d,"\gamma(s_n)^{-1}","\cong" swap] \\
X_0 \ar[r,"f_1"] & X_1 \ar[r,"f_2"] & X_2 \ar[r] & \cdots \ar[r] & X_{n-1} \ar[r,"f_n"] & X_n
\end{tikzcd}
\end{equation*}
where each vertical arrow is an isomorphism in $\cat{T}$ and each $\hat{f}_i$ is a cofibration is $\cat{C}$. Hence it is enough to show the claim for $f_i = \gamma(\hat{f_i})$. 

We take iterated pushouts in $\cat{C}$ to obtain
\begin{equation*}
\begin{tikzcd}
X_0 \ar[r,tail,"\hat{f}_1"] \ar[d] & X_1 \ar[r,tail,"\hat{f}_2"] \ar[d,"\hat{g}_{1,1}"] & X_2 \ar[r,tail,"\hat{f}_3"] \ar[d,"\hat{g}_{1,2}"] & X_3 \ar[r,tail] \ar[d,"\hat{g}_{1,3}"] & \cdots \\
\ast \ar[r,tail] & X_1/X_0 \ar[r,tail,"\hat{v}_{1,1}"] \ar[d] & X_2/X_0 \ar[r,tail,"\hat{v}_{1,2}"] \ar[d,"\hat{u}_{1,2}"] & X_3/X_0 \ar[r,tail] \ar[d,"\hat{u}_{1,3}"] & \cdots \\
& \ast \ar[r,tail] & X_2/X_1 \ar[r,tail,"\hat{v}_{2,2}"] \ar[d] & X_3/X_1 \ar[r,tail] \ar[d,"\hat{u}_{2,3}"] & \cdots \\
&& \ast \ar[r,tail] & X_3/X_2 \ar[r,tail] & \cdots \nospacepunct{.}
\end{tikzcd}
\end{equation*}
We set
\begin{equation*}
\hat{g}_{i,j} \colonequals \hat{u}_{i,j} \circ \cdots \circ \hat{u}_{1,j} \circ \hat{g}_{1,j}
\end{equation*}
for $1 \leq i \leq j \leq n$. We may assume
\begin{equation*}
g_{i,j} = \gamma(\hat{g}_{i,j}) \quad \text{and} \quad h_{i,j} = \delta(\hat{f}_j \circ \cdots \circ \hat{f}_i)\,.
\end{equation*}
We further set
\begin{equation*}
u_{i,j} \colonequals \gamma(\hat{u}_{i,j}) \quad \text{and} \quad v_{i,j} \colonequals \gamma(\hat{v}_{i,j})\,.
\end{equation*}

From the pushout squares above and the naturality of $\delta$ by \cite[Proposition~A.11]{Schwede:2013}, we obtain
\begin{equation*}
\delta(\hat{v}_{i,k-1} \circ \cdots \circ \hat{v}_{i,j}) = \delta(\hat{f}_{j,k}) \circ \susp g_{i,j}\,.
\end{equation*}
This yields the exact triangles \cref{octahedral_iterated:triangles}.

The diagrams in \cref{octahedral_iterated:comp_three,octahedral_iterated:comp_four,octahedral_iterated:extra} hold by construction and \cite[Proposition~A.11]{Schwede:2013}. The squares in \cref{octahedral_iterated:comp_four,octahedral_iterated:extra} are homotopy cartesian by \cite[Lemma~5.7, Proposition~5.8]{Letz/Stephan:2025}. 
\end{proof}

\bibliographystyle{amsalpha}
\bibliography{main}

\end{document}